\documentclass[12pt,a4paper]{amsart}

\usepackage[abbrev]{amsrefs}
\usepackage{amssymb}

\theoremstyle{plain}
  \newtheorem{thm}{Theorem}[section]
  \newtheorem{lem}[thm]{Lemma}
  
  \newtheorem{cor}[thm]{Corollary}
  \newtheorem{prop}[thm]{Proposition}

\theoremstyle{definition}
  \newtheorem{defn}[thm]{Definition}

\theoremstyle{remark}

\numberwithin{equation}{section}

\DeclareMathOperator{\ObsDiam}{ObsDiam}
\DeclareMathOperator{\diam}{diam}

\DeclareMathOperator{\supp}{supp}
\DeclareMathOperator{\dP}{\mathit{d}_\mathrm{P}}
\DeclareMathOperator{\dconc}{\mathit{d}_\mathrm{conc}}

\DeclareMathOperator{\dH}{\mathit{d}_\mathrm{H}}
\DeclareMathOperator{\dTV}{\mathit{d}_\mathrm{TV}}

\newcommand{\field}[1]{\mathbb{#1}}

\newcommand{\R}{\field{R}}

\newcommand{\dKF}[1][]{{d_{\rm KF}^{#1}}}

\newcommand{\cX}{\mathcal{X}}
\newcommand{\cL}{\mathcal{L}}

\newcommand{\Lip}{\mathcal{L}\mathit{ip}}
\newcommand{\cP}{\mathcal{P}}

\begin{document}

\title[High-dimensional ellipsoids]
{High-dimensional ellipsoids\\ converge to Gaussian spaces}

\thanks{This work was supported by JSPS KAKENHI Grant Number 19K03459
and 17J02121.}

\begin{abstract}
We prove the convergence of (solid) ellipsoids to a Gaussian space
in Gromov's concentration/weak topology as the dimension diverges to infinity.
This gives the first discovered example of an irreducible nontrivial convergent sequence in the concentration topology,
where `irreducible nontrivial' roughly means to be not constructed from L\'evy families nor box convergent sequences.
\end{abstract}

\author{Daisuke Kazukawa}
\author{Takashi Shioya}

\address{Mathematical Institute, Tohoku University, Sendai 980-8578,
  JAPAN}
\email{shioya@math.tohoku.ac.jp}
\email{daisuke.kazukawa.s6@dc.tohoku.ac.jp}
\date{\today}

\keywords{ellipsoid, concentration topology, Gaussian space, observable distance, box distance, pyramid}

\subjclass[2010]{53C23}

\maketitle

\section{Introduction} \label{sec:intro}

The study of convergence of metric measure spaces is one of
central topics in geometric analysis on metric measure spaces.
We refer to \cites{Gmv:green, St:geomI, St:geomII, LV:Ric, GMS:conv}
for some celebrated works on it.
Such the study originates that of Gromov-Hausdorff convergence/collapsing of
Riemannian manifolds, which has widely been developed and
applied to solutions to many significant problems in geometry and topology,
including Thurston's geometrization conjecture \cites{P:surgery, SyYm:volume}.
As the starting point of geometric analytic study in the collapsing theory,
Fukaya \cite{Fk:Laplace} introduced the concept of measured Gromov-Hausdorff convergence
of metric measure spaces to study the Laplacian of collapsing
Riemannian manifolds.
There, he discovered that only the metric structure but also the measure structure
plays an important role in the collapsing phenomena.
After that, Cheeger-Colding \cites{CC:strI, CC:strII, CC:strIII}
established a theory of
measured Gromov-Hausdorff limits of complete Riemannian
manifolds with a lower bound of Ricci curvature,
which is nowadays widely applied in the Riemannian and K\"ahler geometry.

Meanwhile, Gromov \cite{Gmv:green}*{Chapter 3$\frac{1}{2}_+$}
(see also \cite{Sy:mmg})
has developed a new convergence theory of
metric measure spaces based on the concentration of measure phenomenon
due to L\'evy and V.~Milman \cites{Lv:prob, Ml:heritage, Ld:conc},
where the concentration of measure phenomenon
is roughly stated as that any $1$-Lipschitz function on
high-dimensional spaces is almost constant.
In Gromov's theory, he introduced two fundamental concepts of distance functions,
the \emph{observable distance function} $\dconc$
and the \emph{box distance function} $\square$, on
the set, say $\cX$, of isomorphism classes of metric measure spaces.
The box distance function is nearly
a metrization of measured Gromov-Hausdorff convergence
(precisely the isomorphism classes are little different),
while the observable distance function induces a very characteristic topology,
called the \emph{concentration topology},
which is effective to capture the high-dimensional aspects of spaces.
The concentration topology is weaker than the box topology
and in particular, a measured Gromov-Hausdorff convergence
becomes a convergence in the concentration topology.
He also introduced a natural compactification, say $\Pi$, of $\cX$,
with respect to the concentration topology,
where the topology on $\Pi$ is called the \emph{weak topology}.
The concentration topology is sometimes useful to investigate
the dimension-free properties of manifolds.
For example, it has been applied to obtain a new dimension-free estimate of
eigenvalue ratios of the drifted Laplacian on a closed Riemannian manifold
with nonnegative Bakry-\'Emery Ricci curvature \cite{FS:conc}.

The study of the concentration and weak topologies has been
growing rapidly in recent years (see \cites{FS:conc, Kz:prod, KOS:rough, Nk:isop-error, Oz:quantum, OzSz:Talagrand, OzSy:limf, OzYk:stab, P:inf-gp, Sy:mmg, Sy:mmlim, SyTk:stiefel}).
However, there are only a few nontrivial examples of convergent sequences of
metric measure spaces in the concentration and weak topologies,
where `nontrivial' means neither to be
a L\'evy family (i.e., convergent to a one-point space),
to infinitely dissipate (see Subsection \ref{ssec:dissipation}
for dissipation), nor to be box convergent.
One way to construct a nontrivial convergent sequence
is to take the disjoint union or
the product (more generally the fibration) of trivial sequences and
to perform little surgery on it
(and also to repeat these procedures finitely many times).
We call a sequence obtained in this way a \emph{reducible} sequence.
An \emph{irreducible} sequence is a sequence that is not reducible.
In this paper, any sequence of (solid) ellipsoids
has a subsequence converging to
an infinite-dimensional Gaussian space in the concentration/weak topology.
This provides a new family of nontrivial weak convergent sequences
and especially contains the first discovered example of an irreducible nontrivial sequence that is convergent in the concentration topology.

Let us state our main results precisely.
A solid ellipsoid and an ellipsoid are respectively written as
\begin{align*}
\mathcal{E}^n_{\{\alpha_i\}} &:=
\{\,x \in \R^n \mid \sum_{i=1}^n \frac{x_i^2}{\alpha_i^2} \le 1\,\},\\
\mathcal{S}^{n-1}_{\{\alpha_i\}} &:=
\{\,x \in \R^n \mid \sum_{i=1}^n \frac{x_i^2}{\alpha_i^2} = 1\,\},
\end{align*}
where $\{\alpha_i\}$, $i=1,2,\dots,n$,
is a finite sequence of positive real numbers.
See Section \ref{sec:weak-conv} for the definition of their metric-measure
structures.
Denote by $E^n_{\{\alpha_i\}}$ any one of
$\mathcal{E}^n_{\{\alpha_i\}}$ and $\mathcal{S}^{n-1}_{\{\alpha_i\}}$.
Let us given a sequence $\{E^{n(j)}_{\{\alpha_{ij}\}_i}\}_j$
of (solid) ellipsoids,
where $\{\alpha_{ij}\}$, $i=1,2,\dots,n(j)$, $j=1,2,\dots$,
is a double sequence of positive real numbers.
Our problem is to determine under what condition it will converge
in the concentration/weak topology and to describe its limit.

In the case where the dimension $n(j)$ is bounded for all $j$,
the problem is easy to solve.
In fact, such the sequence has a Hausdorff-convergent subsequence
in a Euclidean space,
which is also box convergent, if $\alpha_{ij}$ is bounded for all $i$ and $j$;
the sequence has an infinitely dissipating subsequence
if $\alpha_{ij}$ is unbounded.

We set $a_{ij} := \alpha_{ij}/\sqrt{n(j)-1}$.
If $n(j)$ and $\sup_i a_{ij}$ both diverge to infinity as $j\to\infty$,
then it is also easy to prove that  $\{E^n_{\{\alpha_i\}}\}$ infinitely dissipates
(see Proposition \ref{prop:aij-diverge}).

By the reason we have mentioned above, we assume
\begin{enumerate}
\item[(A0)] $n(j)$ diverges to infinity as $j\to\infty$
and $a_{ij}$ is bounded for all $i$ and $j$.
\end{enumerate}
We further consider the following three conditions.
\begin{enumerate}
\item[(A1)] $n(j)$ is monotone nondecreasing in $j$.
\item[(A2)] $a_{ij}$ is monotone nonincreasing in $i$ for each $j$.
\item[(A3)] $a_{ij}$ converges to a real number, say $a_i$, as $j\to\infty$
for each $i$.
\end{enumerate}
Note that (A2) and (A3) together imply
that $a_i$ is monotone nonincreasing in $i$.

Any sequence of (solid) ellipsoids with (A0)
contains a subsequence $\{E_j\}$ such that
each $E_j$ is isomorphic to $E^{n(j)}_{\{\sqrt{n(j) - 1}\,a_{ij}\}_i}$
for some sequence $\{a_{ij}\}$ satisfying (A0)--(A3).
In fact, we have a subsequence for which
the dimensions satisfy (A1).
Then, exchanging the axes of coordinate provides (A2).  
A diagonal argument proves to have a subsequence
satisfying (A3).
Thus, our problem becomes to investigate
the convergence of $\{E^{n(j)}_{\{\sqrt{n(j) - 1}\,a_{ij}\}_i}\}_j$
satisfying (A0)--(A3).

One of our main theorems is stated as follows.
Refer to Subsection \ref{ssec:Gaussian} for the definition of
the Gaussian space $\Gamma^\infty_{\{a_i^2\}}$.

\begin{thm} \label{thm:ellipsoid}
Let $\{a_{ij}\}$, $i=1,2,\dots,n(j)$, $j=1,2,\dots$, be a sequence
of positive real numbers satisfying {\rm(A0)--(A3)}.
Then, $E^{n(j)}_{\{\sqrt{n(j) - 1}\,a_{ij}\}_i}$
converges weakly to the infinite-dimensional Gaussian space
$\Gamma^\infty_{\{a_i^2\}}$ as $j \to \infty$.
This convergence becomes a convergence in the concentration topology if and only if $\{a_i\}$ is an $l^2$-sequence.
Moreover, this convergence becomes an asymptotic concentration
{\rm(}i.e., a $\dconc$-Cauchy sequence{\rm)}
if and only if $\{a_i\}$ converges to zero.
\end{thm}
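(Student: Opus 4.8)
The plan is to establish the weak convergence first, then characterize the two stronger modes of convergence.

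For the \emph{weak convergence} to $\Gamma^\infty_{\{a_i^2\}}$, the natural strategy is to work with pyramids. One would describe the pyramid $\mathcal{P}_{E^{n(j)}}$ associated to each ellipsoid and show it converges to the pyramid of $\Gamma^\infty_{\{a_i^2\}}$ in the weak topology. The key point is that the uniform (or surface) measure on $\mathcal{E}^{n(j)}_{\{\sqrt{n(j)-1}\,a_{ij}\}_i}$, when pushed forward by the coordinate projections $(x_1,\dots,x_k)$, converges to the product Gaussian measure with variances $a_1^2,\dots,a_k^2$; this is a classical Poincaré-type / Maxwell–Borel phenomenon for the high-dimensional sphere, and the ellipsoidal scaling $\alpha_{ij}=\sqrt{n(j)-1}\,a_{ij}$ is exactly what makes the $i$-th coordinate have limiting variance $a_i^2$. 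Condition (A0) guarantees boundedness of the $a_{ij}$ so that no mass escapes, (A3) gives the limiting variances, and (A1)–(A2) organize the monotonicity needed to pass to a genuine pyramid limit rather than just a subsequential one. Concretely, one would show that for each fixed finite-dimensional $1$-Lipschitz test map the pushforward laws converge in Prokhorov distance, deduce that every mm-space dominated by some $E_j$ is, in the limit, dominated by $\Gamma^\infty_{\{a_i^2\}}$ and conversely, and invoke the characterization of weak convergence via pyramids from Gromov's theory (as recalled in the cited background).

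For the \emph{concentration convergence}, the issue is whether the convergence $\mathcal{P}_{E_j}\to\mathcal{P}_{\Gamma^\infty}$ can be upgraded to $\dconc(E_j,\Gamma^\infty)\to 0$, equivalently whether $\Gamma^\infty_{\{a_i^2\}}$ is not merely a point of the pyramid compactification $\Pi$ but lies in $\mathcal{X}$ itself and is approached in $\dconc$. A Gaussian space $\Gamma^\infty_{\{a_i^2\}}$ with the $l^2$-metric is a genuine mm-space (its natural metric is finite a.e.) precisely when $\sum_i a_i^2<\infty$; when $\{a_i\}\notin l^2$ the space "$\Gamma^\infty_{\{a_i^2\}}$" is only a boundary pyramid, not an mm-space, so concentration convergence fails by definition. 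The substantive direction is the "if": assuming $\{a_i\}\in l^2$ one must produce, for each $\varepsilon$, a single map (e.g. the projection to the first $k$ coordinates with $k$ large depending on $\varepsilon$) that witnesses $\dconc(E_j,\Gamma^\infty)<\varepsilon$ for all large $j$; here one controls the tail $\sum_{i>k}a_i^2$ uniformly in $j$ — this is where (A2) (monotonicity in $i$) and (A0) are used to get the tail estimate $\sum_{i>k}a_{ij}^2$ small uniformly in $j$ — and combines it with the finite-dimensional Prokhorov convergence already established.

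For the \emph{asymptotic concentration} ($\dconc$-Cauchy) statement, note that a sequence is $\dconc$-Cauchy iff its pyramids converge to \emph{some} pyramid that, moreover, is the pyramid of an mm-space that is also a $\dconc$-limit; by the first two parts the candidate limit is $\Gamma^\infty_{\{a_i^2\}}$, and the sequence is $\dconc$-Cauchy exactly when the convergence is a concentration convergence to something in $\mathcal{X}$. The refinement is that one no longer needs $\{a_i\}\in l^2$ but only $a_i\to 0$: a $\dconc$-Cauchy sequence need not converge \emph{to} a fixed space in the stated scale, and $a_i\to 0$ is precisely the condition under which the ``concentration function'' of the truncated Gaussians stabilizes — i.e. the tails, while possibly of infinite total variance, become individually negligible so that the partial spaces $\Gamma^k_{\{a_i^2\}_{i\le k}}$ form a $\dconc$-Cauchy sequence. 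One would argue: if $a_i\to 0$ then $\{\Gamma^k\}_k$ is $\dconc$-Cauchy (a direct computation with one-dimensional Gaussian factors whose variance tends to $0$, so adding a factor changes $\dconc$ by an amount $\to 0$), and $E_j$ is $\dconc$-close to $\Gamma^{k(j)}$ for suitable $k(j)\to\infty$; conversely if $a_i\not\to 0$ then, along a subsequence, $a_i\ge c>0$ for infinitely many $i$, and one exhibits a $1$-Lipschitz function (a suitable partial sum direction) on which the observable diameter of $E_j$ fails to be Cauchy, contradicting $\dconc$-Cauchyness.

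The main obstacle I anticipate is the ``only if'' directions, i.e. the sharpness of the $l^2$ and ``$\to 0$'' thresholds: showing that when $\{a_i\}\notin l^2$ the weak limit genuinely lies on the boundary $\Pi\setminus\mathcal{X}$ (so no concentration convergence is possible), and when $a_i\not\to 0$ the sequence fails to be $\dconc$-Cauchy. These require lower bounds on $\dconc$ or on observable diameters, which are typically harder than the matching upper bounds; the natural tool is to test against explicitly chosen families of $1$-Lipschitz functions (partial-sum functionals $x\mapsto \sum_{i\in I} c_i x_i/\alpha_i$ normalized to be $1$-Lipschitz) and use the fact that their pushforwards under the ellipsoid measure are asymptotically Gaussian with computable variances, so that the separation distances $\Sep$ or observable diameters are bounded below by quantities like $(\sum_{i\in I} a_i^2)^{1/2}$. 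Getting these lower bounds uniform enough to contradict Cauchyness (rather than just convergence to a specific space) is the delicate part.
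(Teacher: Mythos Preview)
Your outline captures the easy inclusion $\cP_\infty \supset \cP\,\Gamma^\infty_{\{a_i^2\}}$ via the Maxwell--Boltzmann law for coordinate projections, but it does not contain the idea needed for the reverse inclusion $\cP_\infty \subset \cP\,\Gamma^\infty_{\{a_i^2\}}$, which is the core of the theorem. Showing that ``pushforward laws under finite-dimensional $1$-Lipschitz maps converge'' only tells you which mm-spaces are dominated by the ellipsoids in the limit; it says nothing about which mm-spaces dominate them. For the reverse inclusion the paper constructs an explicit transport map $\varphi$ from $\Gamma^n_{\{a_i^2\}}$ to $E^n_{\{\sqrt{n-1}\,a_i\}}$, namely the conjugate $L\circ\bar\varphi\circ L^{-1}$ of the isotropic rearrangement $\bar\varphi:\Gamma^n_{\{1^2\}}\to B^n(\sqrt{n-1})$ by the diagonal linear map $L=\diag(a_i)$, and proves (Lemma~3.1) that on the region $D^n_{N,\varepsilon}\cap F^n_\theta=\{\,|x_i|/\|x\|<\varepsilon\ (i<N),\ \|L^{-1}x\|\ge\theta\sqrt{n}\,\}$ one has $\|d\varphi_x\|\le\theta^{-1}\sqrt{1+CN\varepsilon}$. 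This region has asymptotically full measure only after one reduces, via the monotone comparison $a_{ij}\le(1+2\varepsilon)b_{\varepsilon,i}$, to the special case where $a_i$ is constant for $i\ge N$; that reduction is where (A2)--(A3) are actually used. Without some such approximately $1$-Lipschitz map from the Gaussian to the ellipsoid you cannot bound $\cP_\infty$ from above, and nothing in your proposal supplies one.

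A second gap concerns the concentration-topology equivalence. You propose to upgrade weak convergence to $\dconc$-convergence by a uniform tail estimate $\sum_{i>k}a_{ij}^2\to 0$ in $j$. That estimate is generally \emph{false} under (A0)--(A3); it is essentially the $l^2$-convergence hypothesis of Theorem~\ref{thm:box-conv}, which characterizes \emph{box} convergence, not concentration convergence. The correct argument is structural: the embedding $\iota:(\cX,\dconc)\to(\Pi,\rho)$ is a topological embedding, so once weak convergence to a point of $\iota(\cX)$ is established, $\dconc$-convergence is automatic. Thus the ``if and only if $\{a_i\}\in l^2$'' reduces entirely to whether $\Gamma^\infty_{\{a_i^2\}}$ is an mm-space, with no further analytic work. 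Similarly, for the asymptotic-concentration statement the paper does not estimate $\dconc$ directly but invokes the equivalence in Theorem~\ref{thm:asymp-conc}: when $a_i\to 0$ the limit pyramid is concentrated (via compact resolvent of the Ornstein--Uhlenbeck operator on each factor), and when $\inf_i a_i=\underline{a}>0$ it contains the non-concentrated pyramid $\cP\,\Gamma^\infty_{\{\underline{a}^2\}}$. Your proposed hands-on lower bounds on observable diameters could in principle replace this, but as you note yourself, making them uniform enough to contradict the Cauchy property is delicate, whereas the pyramid criterion sidesteps that entirely.
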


Gromov presents an exercise \cite{Gmv:green}*{3$\frac{1}{2}$.57} 
which is some easier special cases of our theorem.
Our theorem provides not only an answer
but also a complete generalization of his exercise.

For the case of round spheres (i.e., $a_{ij} = a_{1j}$ for all $i$ and $j$)
and also of projective spaces,
the theorem is formerly obtained by the second named author
\cites{Sy:mmg, Sy:mmlim},
for which the convergence is only weak.
Also, the weak convergence of Stiefel and flag manifolds
are studied jointly by Takatsu and the second named author \cite{SyTk:stiefel}.

We emphasize that convergence in the weak/concentration topology is completely different from weak convergence of measures.
For instance, the Prokhorov distance between the normalized volume measure
on $S^{n-1}(\sqrt{n-1})$ and the $n$-dimensional standard Gaussian measure on $\R^n$
is bounded away from zero \cite{SyTk:stiefel},
though they both converge to the infinite-dimensional standard Gaussian space in Gromov's weak topology.

As for the characterization of weak convergence of measures,
we prove in Proposition \ref{prop:box-conv} that,
if $\{a_{ij}\}_i$ $l^2$-converges to an $l^2$-sequence
$\{a_i\}$ as $j\to\infty$,
then the measure of $E^{n(j)}_{\{\sqrt{n(j) - 1}\,a_{ij}\}_i}$
converges weakly to the Gaussian measure $\gamma^\infty_{\{a_i\}}$
on a Hilbert space,
and consequently, the weak convergence
in Theorem \ref{thm:ellipsoid} becomes the box convergence.
Conversely, the $l^2$-convergence of $\{a_{ij}\}_i$
is also a necessary condition for the box convergence of the (solid) ellipsoids
as is seen in the following theorem.

\begin{thm} \label{thm:box-conv}
Let $\{a_{ij}\}$, $i=1,2,\dots,n(j)$, $j=1,2,\dots$, be a sequence
of positive real numbers satisfying {\rm(A0)--(A3)}.
Then, the convergence in Theorem {\rm\ref{thm:ellipsoid}} becomes
a box convergence if and only if we have
\[
\sum_{i=1}^\infty a_i^2 < +\infty
\quad\text{and}\quad
\lim_{j\to\infty} \sum_{i=1}^{n(j)} (a_{ij} - a_i)^2 = 0.
\]
\end{thm}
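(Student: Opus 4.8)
The plan is to establish the two implications separately, the `if' part being a quick consequence of Proposition~\ref{prop:box-conv} and the `only if' part being the substantial one.

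For the `if' part, assume $\sum_{i=1}^{\infty}a_i^2<\infty$ and $\lim_{j\to\infty}\sum_{i=1}^{n(j)}(a_{ij}-a_i)^2=0$, and regard the finite tuple $\{a_{ij}\}_i$ as an element of $l^2$ by appending zeros. Then
\[
\bigl\|\{a_{ij}\}_i-\{a_i\}\bigr\|_{l^2}^2=\sum_{i=1}^{n(j)}(a_{ij}-a_i)^2+\sum_{i=n(j)+1}^{\infty}a_i^2,
\]
whose right-hand side tends to $0$ as $j\to\infty$, since the first sum vanishes by hypothesis and the second is the tail of a convergent series with $n(j)\to\infty$ by (A0). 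Hence $\{a_{ij}\}_i$ $l^2$-converges to the $l^2$-sequence $\{a_i\}$, and Proposition~\ref{prop:box-conv} gives the box convergence.

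For the `only if' part, suppose that $X_j:=E^{n(j)}_{\{\sqrt{n(j)-1}\,a_{ij}\}_i}$ box-converges to $\Gamma^\infty_{\{a_i^2\}}$. Since the box topology is finer than the concentration topology, $X_j$ also concentrates to $\Gamma^\infty_{\{a_i^2\}}$, so the `only if' part of Theorem~\ref{thm:ellipsoid} yields $\sum_i a_i^2<\infty$; this is the first required condition, and with it $\Gamma^\infty_{\{a_i^2\}}$ is the space $l^2$ with its norm metric and the product Gaussian measure $\gamma^\infty_{\{a_i\}}$. Put $S_j:=\sum_{i=1}^{n(j)}a_{ij}^2$, let $v_j\in l^2$ be $\{a_{ij}\}_i$ padded with zeros, and $v:=\{a_i\}\in l^2$. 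Since $\|v_j-v\|_{l^2}^2=\sum_{i=1}^{n(j)}(a_{ij}-a_i)^2+\sum_{i>n(j)}a_i^2$ with the last sum tending to $0$, the remaining task is to show $v_j\to v$ in $l^2$; by (A3) we have $v_j\to v$ coordinatewise, which under a uniform bound $\sup_j S_j<\infty$ becomes weak convergence in $l^2$, and as weak convergence together with convergence of norms forces strong convergence, it is enough to prove $\sup_j S_j<\infty$ and $\lim_{j\to\infty}S_j=\sum_i a_i^2$.

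To obtain these, I would exploit that box convergence forces the convergence of distance distributions: $(d_{X_j})_*(\mu_{X_j}\otimes\mu_{X_j})$ converges weakly on $[0,\infty)$ to the law $L$ of $\|x-y\|_{l^2}$ for two independent $\gamma^\infty_{\{a_i\}}$-distributed points (a standard consequence, following from the parametrization description of $\square$). In particular $L$, and hence the sequence of distance distributions, is tight on $[0,\infty)$, and $L$ has second moment $2\sum_i a_i^2$. Now on a (solid) ellipsoid $d_{X_j}(x,y)\ge\|x-y\|_{\R^{n(j)}}$, with equality for solid ellipsoids by convexity and for ellipsoid hypersurfaces with the induced metric; moreover under $\mu_{X_j}\otimes\mu_{X_j}$ the quantity $\|x-y\|_{\R^{n(j)}}$ is concentrated about a value of order $\sqrt{S_j}$ with relative fluctuation tending to $0$ when $S_j\to\infty$, by the classical concentration of the norm function on a high-dimensional ellipsoid together with the identity $\int\|x-y\|_{\R^{n(j)}}^2\,d(\mu_{X_j}\otimes\mu_{X_j})=2\int\|x\|^2\,d\mu_{X_j}=(2+o(1))S_j$. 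If $\sup_j S_j=\infty$, then along a subsequence $\|x-y\|$ tends to $+\infty$ in probability, forcing escape of mass to infinity in the distance distributions and contradicting their tightness; so $\sup_j S_j<\infty$. With this in hand, a direct computation of second and fourth moments of the coordinates on the ellipsoids bounds $\sup_j\int d_{X_j}^4\,d(\mu_{X_j}\otimes\mu_{X_j})$ (in the hypersurface case with the intrinsic metric one adds that this metric is concentrated and its diameter grows only polynomially in $n(j)$), so $\{d_{X_j}^2\}$ is uniformly integrable and $\int d_{X_j}^2\,d(\mu_{X_j}\otimes\mu_{X_j})\to 2\sum_i a_i^2$. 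Comparing with $\int d_{X_j}^2\,d(\mu_{X_j}\otimes\mu_{X_j})\ge\int\|x-y\|_{\R^{n(j)}}^2\,d(\mu_{X_j}\otimes\mu_{X_j})=(2+o(1))S_j$ gives $\limsup_j S_j\le\sum_i a_i^2$, while $S_j\ge\sum_{i=1}^{k}a_{ij}^2\to\sum_{i=1}^{k}a_i^2$ for each fixed $k$ gives $\liminf_j S_j\ge\sum_i a_i^2$; hence $S_j\to\sum_i a_i^2$, which completes the argument.

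The main obstacle is the step excluding $\sup_j S_j=\infty$: it requires a sufficiently quantitative form of the concentration of the norm function on high-dimensional (solid) ellipsoids to ensure that $\sum_i a_{ij}^2\to\infty$ really produces escape of mass at infinity in the distance distributions, and it must be complemented by the uniform-in-$j$ fourth-moment estimate (and, for hypersurfaces with the intrinsic metric, by control of a small-measure exceptional set outside of which the intrinsic and chordal distances are comparable). These estimates are of the same nature as those used to prove Theorem~\ref{thm:ellipsoid}.
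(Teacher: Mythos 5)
Your proof of the `if' part is exactly the paper's: it is Proposition \ref{prop:box-conv}. For the `only if' part your argument is correct in outline but proceeds by a genuinely different route. Like the paper, you first extract $\sum_i a_i^2<+\infty$ from Theorem \ref{thm:ellipsoid} (box convergence implies convergence in the concentration topology); but then you work with the distance distribution $(d_{X_j})_*(\mu_{X_j}\otimes\mu_{X_j})$ of $X_j:=E^{n(j)}_{\{\sqrt{n(j)-1}\,a_{ij}\}_i}$, which is an isomorphism invariant and $\dP$-continuous under $\square$, and you force $S_j:=\sum_{i=1}^{n(j)}a_{ij}^2\to\sum_i a_i^2$ by a tightness argument (excluding $S_j\to\infty$) plus uniform integrability from fourth moments, finally upgrading the coordinatewise convergence (A3) to strong $l^2$-convergence via weak convergence in $l^2$ together with convergence of norms. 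The paper argues by contradiction instead: if $\{a_{ij}\}_i$ does not $l^2$-converge, Banach--Alaoglu gives $\lim_j S_j>\sum_i a_i^2$; a truncation then produces $b_{ik}\le a_{ij(k)}$ with $b_{1k}\to0$ yet $\sum_i b_{ik}^2\ge\varepsilon_0$, so that by Lemma \ref{lem:box-dom} and Theorem \ref{thm:ellipsoid} the associated ellipsoids would box-converge to a one-point space along a subsequence, which Lemmas \ref{lem:b} and \ref{lem:box-conv} rule out by a second-moment computation combined with the central symmetry of the measures. What each approach buys: the paper's reduction to the degenerate (one-point limit) case needs no new quantitative estimates beyond Lemma \ref{lem:eg} and the symmetry trick, whereas your route is more direct, avoids that detour, and actually identifies $\lim_j S_j$; the price is exactly the estimates you flag, and they do go through under (A0): since $a_{ij}\le M$ one gets $\mathrm{Var}(\|x\|^2)\le CM^2S_j$ and $\mathrm{Var}(\langle x,y\rangle)\le M^2S_j$ on both models, so Chebyshev yields $\|x-y\|^2\gtrsim S_j$ with probability tending to one whenever $S_j\to\infty$ (contradicting tightness), and fourth moments are $O(S_j^2+M^2S_j)$, giving the uniform integrability --- but you should explicitly invoke (A0) here, since without a bound on $\sup_i a_{ij}$ the norm need not concentrate. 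Two minor points: in the paper both $\mathcal{E}^n_{\{\alpha_i\}}$ and $\mathcal{S}^{n-1}_{\{\alpha_i\}}$ carry the restriction of the Euclidean distance, so $d_{X_j}(x,y)=\|x-y\|$ exactly and your hedging about the intrinsic metric on the hypersurface is unnecessary; and, like the paper's proof, your argument uses only mm-invariants, so it correctly avoids the pitfall noted in the introduction of reasoning with a particular embedding into the Hilbert space rather than with isomorphism classes.
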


Theorems \ref{thm:ellipsoid} and \ref{thm:box-conv} together provide
an example of irreducible nontrivial convergent sequence of
metric measure spaces in the concentration topology, i.e., 
the sequence of the (solid) ellipsoids with an $l^2$-sequence $\{a_i\}$
and with a non-$l^2$-convergent $\{a_{ij}\}_i$ as $j\to\infty$.

The proof of the `only if' part of Theorem \ref{thm:box-conv}
is highly nontrivial.
If $\{a_{ij}\}_i$ does not $l^2$-converge, then
it is easy to see that
the measure of the (solid) ellipsoid in such the sequence
does not converge weakly in the Hilbert space.
However, this is not enough to obtain the box non-convergence,
because we consider the \emph{isomorphism classes} of (solid) ellipsoids
for the box convergence.
For the complete proof,
we need a delicate discussion using Theorem \ref{thm:ellipsoid}.

Let us briefly mention the outline of the proof of 
the weak convergence of solid ellipsoids in Theorem \ref{thm:ellipsoid}.
For simplicity, we set $E^n := E^{n(j)}_{\{\sqrt{n(j) - 1}\,a_{ij}\}_i}$ and
$\Gamma := \Gamma^\infty_{\{a_i^2\}}$.
For the weak convergence, it is sufficient to show that
\begin{enumerate}
\item[(1.1)] the limit of $E^n$ dominates $\Gamma$,
\item[(1.2)] $\Gamma$ dominates the limit of $E^n$,
\end{enumerate}
where, for two metric measure spaces $X$ and $Y$,
the space $X$ \emph{dominates} $Y$
if there is a $1$-Lipschitz map from $X$ to $Y$
preserving their measures.

(1.1) easily follows from the Maxwell-Boltzmann distribution law
(Proposition \ref{prop:MB-law}).

(1.2) is much harder to prove.
Let us first consider the simple case where
$E^n$ is the ball $B^n(\sqrt{n-1})$ of radius $\sqrt{n-1}$
and where $\Gamma = \Gamma^\infty_{\{1^2\}}$.
We see that, for any fixed $0 < \theta < 1$,
the $n$-dimensional Gaussian measure $\gamma^n_{\{1^2\}}$ and
the normalized volume measure of $B^n(\theta\sqrt{n-1})$
both are very small for large $n$.
Ignoring this small part $B^n(\theta\sqrt{n-1})$,
we find a measure-preserving isotropic map, say $\varphi$, from $\Gamma^n_{\{1^2\}} \setminus B^n(\theta\sqrt{n-1})$ to the annulus $B^n(\sqrt{n-1}) \setminus B^n(\theta\sqrt{n-1})$,
where we normalize their measures to be probability.
Estimating the Lipschitz constant of $\varphi$,
we obtain (1.2) with error.
This error is estimated and we eventually obtain
the required weak convergence.

We next try to apply this discussion for solid ellipsoids.
We consider the distortion of the above isotropic map $\varphi$
by a linear transformation determined by $\{a_{ij}\}$.
However, the Lipschitz constant of such the distorted isotropic map
is arbitrarily large depending on $\{a_{ij}\}$.
To overcome this problem,
we settle the assumptions (A0)--(A3),
from which the discussion boils down to
the special case where
$a_i = a_N$ for all $i \ge N$ and
$a_{ij} = a_i \ge a_N$ for all $i,j$ and for a (large) number $N$.
In fact, by (A0)--(A3),
the solid ellipsoid $E^n$ for large $n$ and
the Gaussian space $\Gamma$ are both
close to those in the above special case.  
In this special case, the Gaussian measure $\gamma^n_{\{a_i^2\}}$ and
the normalized volume measure of $E^n$ of the domain
\[
\{\,x \in \R^n \setminus \{o\} \mid
\frac{|x_i|}{\|x\|} < \varepsilon \ \text{for any $i = 1,\dots,N-1$}\,\}.
\]
are both almost full for large $n$ and for any fixed $\varepsilon > 0$.
On this domain, we are able to estimate the Lipschitz constant of
the distorted isotropic map.
With some careful error estimates,
letting $\epsilon \to 0+$ and $\theta \to 1-$,
we prove the weak convergence of $E^n$ to $\Gamma$.

\section{Preliminaries}

In this section, we survey the definitions and the facts
needed in this paper.
We refer to \cite{Gmv:green}*{Chapter 3$\frac{1}{2}_+$} and \cite{Sy:mmg}
for more details.

\subsection{Distance between measures}

\begin{defn}[Total variation distance]
  The \emph{total variation distance $\dTV(\mu,\nu)$} of
  two Borel probability measures $\mu$ and $\nu$ on a topological space $X$
  is defined by
  \[
  \dTV(\mu,\nu) := \sup_A |\,\mu(A) - \nu(A)\,|,
  \]
  where $A$ runs over all Borel subsets of $X$.
\end{defn}

If $\mu$ and $\nu$ are both absolutely continuous with respect to
a Borel measure $\omega$ on $X$, then
\[
\dTV(\mu,\nu) = \frac{1}{2} \int_X \left| \frac{d\mu}{d\omega} - \frac{d\nu}{d\omega} \right| \; d\omega,
\]
where $\frac{d\mu}{d\omega}$ is the Radon-Nikodym derivative of $\mu$ with respect to $\omega$.

\begin{defn}[Prokhorov distance]
  The \emph{Prokhorov distance} $\dP(\mu,\nu)$ between two Borel probability
    measures $\mu$ and $\nu$ on a metric space $(X,d_X)$
  is defined to be the infimum of $\varepsilon \geq  0$ satisfying
  \[
  \mu(B_\varepsilon(A)) \ge \nu(A) - \varepsilon
  \]
  for any Borel subset $A \subset X$, where
  $B_\varepsilon(A) := \{\; x \in X \mid d_X(x,A) < \varepsilon\;\}$.
\end{defn}

The Prokhorov metric is a metrization of weak convergence of
Borel probability measures on $X$ provided that $X$ is a separable
metric space.
It is known that $\dP \le \dTV$.

\begin{defn}[Ky Fan distance]
  Let $(X,\mu)$ be a measure space and $Y$ a metric space.
  For two $\mu$-measurable maps $f,g : X \to Y$, we define 
  the \emph{Ky Fan distance $\dKF(f,g)$ between $f$ and $g$}
  to be the infimum of $\varepsilon \ge 0$ satisfying
  \[
  \mu(\{\;x \in X \mid d_Y(f(x),g(x)) > \varepsilon\;\}) \le \varepsilon.
  \]
% We sometimes write $\dKF(f,g)$ by omitting $\mu$.
\end{defn}

$\dKF$ is a pseudo-metric on the set of $\mu$-measurable maps from $X$ to $Y$.
It holds that $\dKF(f,g) = 0$ if and only if $f = g$ $\mu$-a.e.
We have $\dP (f_* \mu, g_* \mu) \leq \dKF(f,g)$,
where $f_*\mu$ is the push-forward of $\mu$ by $f$.

%\begin{lem}\label{lem:dP-me}
%  Let $(X,\mu)$ be a measure space and $Y$ a metric space.
%  For any two $\mu$-measurable maps $f,g : X \to Y$, we have
%  $\dP (f_* \mu, g_* \mu) \leq \dKF(f,g)$.
%\end{lem}

%\begin{prop} \label{prop:dP-dKF}
%  Let $X$ be a topological space with a Borel probability measure $\mu$
%  and $Y$ a metric space.
%  For any two $\mu$-measurable maps $f,g : X \to Y$, we have
%  \[
%  \dP(f_*\mu,g_*\mu) \le \dKF[\mu](f,g).
%  \]
%\end{prop}
Let $p$ be a real number with $p \ge 1$,
and $(X,d_X)$ a complete separable metric space.

\begin{defn} 
The \emph{$p$-Wasserstein distance} between two Borel probability measures
$\mu$ and $\nu$ on $X$ is defined to be
\[
W_p(\mu,\nu) := \inf_{\pi \in \Pi(\mu,\nu)} \left( \int_{X \times X}
d_X(x,x')^p \, d\pi(x,x') \right)^{\frac{1}{p}} (\le +\infty),
\]
where $\Pi(\mu,\nu)$ is the set of couplings between $\mu$ and $\nu$,
i.e., the set of Borel probability measures $\pi$ on $X \times X$
such that $\pi(A \times X) = \mu(A)$ and $\pi(X \times A) = \nu(A)$
for any Borel subset $A \subset X$.
\end{defn}

\begin{lem} \label{lem:Wp}
Let $\mu$ and $\mu_n$, $n=1,2,\dots$, be Borel probability measures
on $X$.
Then the following {\rm(1)} and {\rm(2)} are equivalent to each other.
\begin{enumerate}
\item $W_p(\mu_n,\mu) \to 0$ as $n\to\infty$.
\item $\mu_n$ converges weakly to $\mu$ as $n\to\infty$ and
the $p$-th moment of $\mu_n$ is uniformly bounded:
\[
\limsup_{n\to\infty} \int_X d_X(x_0,x)^p\, d\mu_n(x) < +\infty
\]
for some point $x_0 \in X$.
\end{enumerate}
\end{lem}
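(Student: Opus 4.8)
The plan is to prove the two implications separately, the tools being H\"older's inequality (which gives $W_1\le W_p$), the metric axioms for $W_p$, and the Skorokhod representation theorem. \emph{For $(1)\Rightarrow(2)$:} H\"older's inequality applied to the cost function against an arbitrary coupling shows $W_1\le W_p$, so $W_1(\mu_n,\mu)\to0$; for a bounded Lipschitz function $f$ with Lipschitz constant $L$ and a near-optimal $W_1$-coupling $\pi_n$ of $\mu_n$ and $\mu$,
\[
\Bigl|\int_X f\,d\mu_n-\int_X f\,d\mu\Bigr|=\Bigl|\int_{X\times X}\bigl(f(x)-f(x')\bigr)\,d\pi_n(x,x')\Bigr|\le L\int_{X\times X}d_X(x,x')\,d\pi_n(x,x'),
\]
so $\int_X f\,d\mu_n\to\int_X f\,d\mu$, and since bounded Lipschitz functions are convergence-determining on a metric space, $\mu_n$ converges weakly to $\mu$. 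For the moment statement, observe that the unique coupling of a measure $\nu$ with the Dirac mass $\delta_{x_0}$ is the product, whence $W_p(\nu,\delta_{x_0})=\bigl(\int_X d_X(x_0,x)^p\,d\nu(x)\bigr)^{1/p}$; the triangle inequality for $W_p$ then gives $\bigl|\,W_p(\mu_n,\delta_{x_0})-W_p(\mu,\delta_{x_0})\,\bigr|\le W_p(\mu_n,\mu)\to0$, so the $p$-th moments of $\mu_n$ converge to that of $\mu$ and in particular remain bounded.

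\emph{For $(2)\Rightarrow(1)$:} since $(X,d_X)$ is complete and separable and $\mu_n\to\mu$ weakly, the Skorokhod representation theorem provides $X$-valued random variables $Y_n\sim\mu_n$ and $Y\sim\mu$ on a common probability space with $Y_n\to Y$ almost surely. The pair $(Y_n,Y)$ is a coupling of $\mu_n$ and $\mu$, so $W_p(\mu_n,\mu)^p\le\mathbb{E}\bigl[d_X(Y_n,Y)^p\bigr]$, and it suffices to show that this expectation tends to $0$. Continuity of $d_X$ gives $d_X(Y_n,Y)^p\to0$ almost surely, so by the Vitali convergence theorem it is enough to prove that $\{d_X(Y_n,Y)^p\}_n$ is uniformly integrable. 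From the bound $d_X(Y_n,Y)^p\le2^{p-1}\bigl(d_X(x_0,Y_n)^p+d_X(x_0,Y)^p\bigr)$, and the fact that $d_X(x_0,Y)^p$ is a single integrable random variable — finiteness of $\int_X d_X(x_0,x)^p\,d\mu$ following from Fatou's lemma along the weakly convergent sequence together with the bound in (2) — the task reduces to uniform integrability of $\{d_X(x_0,Y_n)^p\}_n$. Since $d_X(x_0,Y_n)^p\to d_X(x_0,Y)^p$ almost surely, Scheff\'e's lemma delivers this as soon as $\int_X d_X(x_0,x)^p\,d\mu_n\to\int_X d_X(x_0,x)^p\,d\mu$, which I would extract from the uniform $p$-th moment bound of (2) in conjunction with the weak convergence already established; this closes the loop.

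\emph{Main obstacle.} The one genuinely delicate step is the uniform integrability in $(2)\Rightarrow(1)$: weak convergence alone permits mass to escape to infinity, so the $p$-th moment control of (2) must be deployed exactly to rule this out, the natural route being to promote that control to the convergence $\int_X d_X(x_0,x)^p\,d\mu_n\to\int_X d_X(x_0,x)^p\,d\mu$ (equivalently, to the uniform integrability of $\{d_X(x_0,\cdot)^p\}$ along $(\mu_n)$) and then invoke Scheff\'e's lemma. The remainder — the Skorokhod coupling on a Polish space, finiteness of the $p$-th moment of $\mu$, and the reductions via the triangle and H\"older inequalities — is routine.
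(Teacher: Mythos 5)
The paper states this lemma without proof, as a standard fact about $W_p$, so there is no argument of the authors to compare yours against; I can only judge the proposal on its own terms. Your direction $(1)\Rightarrow(2)$ is fine: $W_1\le W_p$ by H\"older/Jensen, bounded Lipschitz test functions give weak convergence, and $W_p(\nu,\delta_{x_0})=\bigl(\int_X d_X(x_0,x)^p\,d\nu(x)\bigr)^{1/p}$ together with the triangle inequality controls the moments (one should only add a word on finiteness of the moments involved, since the paper allows $W_p=+\infty$).

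The direction $(2)\Rightarrow(1)$, however, contains a genuine gap, and it sits exactly at the step you yourself flag as delicate. Your Skorokhod--Vitali--Scheff\'e scheme needs the convergence $\int_X d_X(x_0,x)^p\,d\mu_n\to\int_X d_X(x_0,x)^p\,d\mu$ (equivalently, uniform integrability of $d_X(x_0,\cdot)^p$ along $\{\mu_n\}$), and you propose to ``extract'' this from weak convergence together with the uniform bound $\limsup_n\int_X d_X(x_0,x)^p\,d\mu_n<+\infty$. No such extraction is possible: a vanishing amount of mass can carry a non-vanishing amount of $p$-th moment to infinity. Concretely, on $X=\R$ with $p=1$, take $\mu_n:=(1-\tfrac1n)\delta_0+\tfrac1n\delta_n$ and $\mu:=\delta_0$; then $\mu_n\to\mu$ weakly, $\int|x|\,d\mu_n=1$ for all $n$, but $\int|x|\,d\mu=0$ and $W_1(\mu_n,\mu)=1$ for all $n$. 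This shows not merely that Scheff\'e's lemma is unavailable, but that the implication $(2)\Rightarrow(1)$ with hypothesis ``uniformly bounded $p$-th moments,'' read literally, is false; the standard characterization (surely what is intended here) requires uniform integrability of $d_X(x_0,\cdot)^p$ with respect to $\{\mu_n\}$ --- equivalently convergence of the $p$-th moments, or a uniform bound on some $q$-th moment with $q>p$. Under that corrected hypothesis your Skorokhod representation plus Vitali argument does go through and is the standard proof; as written, though, the loop you describe cannot be closed from (2) alone.
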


It is known that $\dP^2 \le W_1$
and that $W_p \le W_q$ for any $1 \le p \le q$.

%\begin{prop}
%For any Borel probability measures $\mu$ and $\nu$ on $X$,
%we have
%\begin{align*}
%\dP(\mu,\nu)^2 &\le W_1(\mu,\nu), \tag{1}\\
%W_p(\mu,\nu) &\le W_q(\mu,\nu) \quad\text{for $1 \le p \le q$}. \tag{2}
%\end{align*}
%\end{prop}

\subsection{mm-Isomorphism and Lipschitz order}

\begin{defn}[mm-Space]
  Let $(X,d_X)$ be a complete separable metric space
  and $\mu_X$ a Borel probability measure on $X$.
  We call the triple $(X,d_X,\mu_X)$ an \emph{mm-space}.
  We sometimes say that $X$ is an mm-space, in which case
  the metric and the Borel measure of $X$ are respectively indicated by
  $d_X$ and $\mu_X$.
\end{defn}

\begin{defn}[mm-Isomorphism]
  Two mm-spaces $X$ and $Y$ are said to be \emph{mm-isomorphic}
  to each other if there exists an isometry $f : \supp\mu_X \to \supp\mu_Y$
  with $f_*\mu_X = \mu_Y$,
  where $\supp\mu_X$ is the support of $\mu_X$.
  Such an isometry $f$ is called an \emph{mm-isomorphism}.
  Denote by $\cX$ the set of mm-isomorphism classes of mm-spaces.
\end{defn}

Note that $X$ is mm-isomorphic to $(\supp\mu_X,d_X,\mu_X)$.

\emph{We assume that an mm-space $X$ satisfies
\[
X = \supp\mu_X
\]
unless otherwise stated.}

\begin{defn}[Lipschitz order] \label{defn:dom}
  Let $X$ and $Y$ be two mm-spaces.
  We say that $X$ (\emph{Lipschitz}) \emph{dominates} $Y$
  and write $Y \prec X$ if
  there exists a $1$-Lipschitz map $f : X \to Y$ satisfying
  $f_*\mu_X = \mu_Y$.
  We call the relation $\prec$ on $\cX$ the \emph{Lipschitz order}.
\end{defn}

The Lipschitz order $\prec$ is a partial order relation on $\cX$.

\subsection{Observable diameter}

The observable diameter is one of the most fundamental invariants
of an mm-space up to mm-isomorphism.

\begin{defn}[Partial and observable diameter] \label{defn:ObsDiam}
  Let $X$ be an mm-space and let $\kappa > 0$.
  We define
  the \emph{$\kappa$-partial diameter}
  $\diam(X;1-\kappa) = \diam(\mu_X;1-\kappa)$ of $X$
  to be the infimum of the diameter of $A$, 
  where $A \subset X$ runs over all Borel subsets
  with $\mu_X(A) \ge 1-\kappa$.
  Denote by $\Lip_1(X)$ the set of $1$-Lipschitz continuous
  real-valued functions on $X$.
  We define
  the ($\kappa$-)\emph{observable diameter of $X$} by
  \begin{align*}
  \ObsDiam(X;-\kappa) &:= \sup_{f \in \Lip_1(X)} \diam(f_*\mu_X;1-\kappa),\\
  \ObsDiam(X) &:= \inf_{\kappa > 0} \max\{\ObsDiam(X;-\kappa),\kappa\}.
  \end{align*}
\end{defn}

%\begin{defn}[L\'evy family]
%  A sequence of mm-spaces $X_n$, $n=1,2,\dots$,
%  is called a \emph{L\'evy family} if
%  \[
%  \lim_{n\to\infty} \ObsDiam(X_n;-\kappa) = 0
%  \]
%  for any $\kappa > 0$.
%\end{defn}

It is easy to see that
the ($\kappa$-)observable diameter is monotone nondecreasing
with respect to the Lipschitz order relation.

%\begin{prop} \label{prop:ObsDiam-dom}
%  If $X \prec Y$ for two mm-spaces $X$ and $Y$, then
%  \[
%  \ObsDiam(X;-\kappa) \le \ObsDiam(Y;-\kappa)
%  \]
%  for any $\kappa > 0$.
%\end{prop}

%\begin{defn}[Separation distance]
%  Let $X$ be an mm-space.
%  For any real numbers $\kappa_0,\kappa_1,\cdots,\kappa_N > 0$
%  with $N\geq 1$,
%  we define the \emph{separation distance}
%  \[
%  \Sep(X;\kappa_0,\kappa_1, \cdots, \kappa_N)
%  \]
%  of $X$ as the supremum of $\min_{i\neq j} d_X(A_i,A_j)$
%  over all sequences of $N+1$ Borel subsets $A_0,A_1, \cdots, A_N \subset X$
%  satisfying that $\mu_X(A_i) \geq \kappa_i$ for all $i=0,1,\cdots,N$,
%  where $d_X(A_i,A_j) := \inf_{x\in A_i,y\in A_j} d_X(x,y)$.
%  If $\kappa_i > 1$ for some $i$,
%  we define
%  \[
%  \Sep(X;\kappa_0,\kappa_1, \cdots, \kappa_N) := 0.
%  \]
%\end{defn}
%
%Note that if $\sum_{i=0}^N \kappa_i > 1$,
%then at least two of $A_0,\dots,A_N$ has nonempty intersection
%and so
% \[
%\Sep(X;\kappa_0,\kappa_1, \cdots, \kappa_N) = 0.
%\]
%
%\begin{prop} \label{prop:ObsDiam-Sep}
%  Let $X$ be an mm-space.
%  We have, for any $\kappa > \kappa' > 0$,
%  \begin{align}
%    \tag{1} &\ObsDiam(X;-2\kappa) \le \Sep(X;\kappa,\kappa),\\
%    \tag{2} &\Sep(X;\kappa,\kappa) \le \ObsDiam(X;-\kappa').
%  \end{align}
%\end{prop}

\subsection{Box distance and observable distance}

\begin{defn}[Parameter]
  Let $I := [\,0,1\,)$ and let $X$ be an mm-space.
  A map $\varphi : I \to X$ is called a \emph{parameter of $X$}
  if $\varphi$ is a Borel measurable map with $\varphi_*\cL^1 = \mu_X$,
  where $\cL^1$ denotes the one-dimensional Lebesgue measure on $I$.
\end{defn}

It is known that any mm-space has a parameter.

\begin{defn}[Box distance]
  We define the \emph{box distance $\square(X,Y)$ between
    two mm-spaces $X$ and $Y$} to be
  the infimum of $\varepsilon \ge 0$
  satisfying that there exist parameters
  $\varphi : I \to X$, $\psi : I \to Y$, and
  a Borel subset $\tilde{I} \subset I$ such that
  \[
    \cL^1(\tilde{I}) \ge 1-\varepsilon \quad\text{and}\quad
    |\,\varphi^*d_X(s,t)-\psi^*d_Y(s,t)\,| \le \varepsilon
  \]
  for any $s,t \in \tilde{I}$,
  where
  $\varphi^*d_X(s,t) := d_X(\varphi(s),\varphi(t))$ for $s,t \in I$.
\end{defn}

The box metric $\square$ is a complete separable metric on $\cX$.

%\begin{prop} \label{prop:box-dP}
%  Let $X$ be a complete separable metric space.
%  For any two Borel probability measures $\mu$ and $\nu$ on $X$,
%  we have
%  \[
%  \square((X,\mu),(X,\nu)) \le 2 \dP(\mu,\nu).
%  \]
%\end{prop}

\begin{defn}[$\varepsilon$-mm-isomorphism]
Let $\varepsilon$ be a nonnegative real number.
A map $f : X \to Y$ between two mm-spaces $X$ and $Y$
is called an \emph{$\varepsilon$-mm-isomorphism}
if there exists a Borel subset $\tilde{X} \subset X$ such that
\begin{enumerate}
\item[(i)] $\mu_X(\tilde{X}) \ge 1-\varepsilon$,
\item[(ii)] $|\,d_X(x,x')-d_Y(f(x),f(x'))\,| \le \varepsilon$ for any $x,x' \in \tilde{X}$,
\item[(iii)] $\dP(f_*\mu_X,\mu_Y) \le \varepsilon$.
\end{enumerate}
We call the set $\tilde{X}$ a \emph{nonexceptional domain} of $f$.
\end{defn}

%If we have a $0$-mm-isomorphism $f : X \to Y$, then
%there is an mm-isomorphism from $X$ to $Y$
%that coincides with $f$ $\mu_X$-a.e.

%We have the following.

\begin{lem} \label{lem:box-mmiso}
\begin{enumerate}
Let $X$ and $Y$ be two mm-spaces and let $\varepsilon \ge 0$.
\item If there exists an $\varepsilon$-mm-isomorphism from $X$ to $Y$,
then $\square(X,Y) \le 3\varepsilon$.
\item If $\square(X,Y) \le \varepsilon$, then
there exists a $3\varepsilon$-mm-isomorphism from $X$ to $Y$.
\end{enumerate}
\end{lem}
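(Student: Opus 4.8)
The plan is to pass between the $\varepsilon$-mm-isomorphism condition and the parameter-based definition of $\square$ by building suitable parameters on $I=[0,1)$ and controlling the discrepancy on a large subset. For part (1), suppose $f\colon X\to Y$ is an $\varepsilon$-mm-isomorphism with nonexceptional domain $\tilde X$, $\mu_X(\tilde X)\ge 1-\varepsilon$, distortion at most $\varepsilon$ on $\tilde X$, and $\dP(f_*\mu_X,\mu_Y)\le\varepsilon$. First I would pick a parameter $\varphi\colon I\to X$ of $X$; then $f\circ\varphi$ is a Borel map pushing $\cL^1$ to $f_*\mu_X$, and since $\dP(f_*\mu_X,\mu_Y)\le\varepsilon$ one can use a coupling (or the standard fact that two measures at Prokhorov distance $\le\varepsilon$ admit measurable maps agreeing off a set of measure $\le\varepsilon$ up to displacement $\varepsilon$) to produce a parameter $\psi\colon I\to Y$ of $Y$ with $\cL^1(\{s: d_Y(f(\varphi(s)),\psi(s))>\varepsilon\})\le\varepsilon$, i.e. $\dKF(f\circ\varphi,\psi)\le\varepsilon$. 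Let $\tilde I := \varphi^{-1}(\tilde X)\cap\{s : d_Y(f(\varphi(s)),\psi(s))\le\varepsilon\}$, so $\cL^1(\tilde I)\ge 1-2\varepsilon$. For $s,t\in\tilde I$, a triangle-inequality estimate gives
\[
|\varphi^*d_X(s,t)-\psi^*d_Y(s,t)|
\le |d_X(\varphi(s),\varphi(t))-d_Y(f\varphi(s),f\varphi(t))| + d_Y(f\varphi(s),\psi(s)) + d_Y(f\varphi(t),\psi(t))
\le \varepsilon+\varepsilon+\varepsilon=3\varepsilon,
\]
and $\cL^1(\tilde I)\ge 1-2\varepsilon\ge 1-3\varepsilon$, so $\square(X,Y)\le 3\varepsilon$ by definition.

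For part (2), suppose $\square(X,Y)\le\varepsilon$; fix $\varepsilon'>\varepsilon$ arbitrarily and take parameters $\varphi\colon I\to X$, $\psi\colon I\to Y$ and a Borel set $\tilde I\subset I$ with $\cL^1(\tilde I)\ge 1-\varepsilon'$ and $|\varphi^*d_X-\psi^*d_Y|\le\varepsilon'$ on $\tilde I\times\tilde I$. I would define $f\colon X\to Y$ as follows: choose a Borel section $\sigma\colon X\to I$ of $\varphi$ (possible since $\varphi$ pushes $\cL^1$ to $\mu_X$ — alternatively work directly with the pushforward construction, disintegrating $\cL^1$ along $\varphi$), and set $f := \psi\circ\sigma$. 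Take $\tilde X := \varphi(\tilde I\cap\sigma(X))$, or more carefully $\tilde X := \{x : \sigma(x)\in\tilde I\}$; then $\mu_X(\tilde X)=\cL^1(\sigma_*\mu_X\text{-preimage})\ge\cL^1(\tilde I)\ge 1-\varepsilon'$ using $\sigma_*\mu_X=\cL^1$ up to the usual section issues. For $x,x'\in\tilde X$ with $s=\sigma(x),t=\sigma(x')\in\tilde I$ we get $\varphi(s)=x$, $\varphi(t)=x'$, hence $|d_X(x,x')-d_Y(f(x),f(x'))|=|\varphi^*d_X(s,t)-\psi^*d_Y(s,t)|\le\varepsilon'$. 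Finally, $f_*\mu_X = \psi_*\sigma_*\mu_X = \psi_*\cL^1 = \mu_Y$ exactly, so $\dP(f_*\mu_X,\mu_Y)=0\le\varepsilon'$. Thus $f$ is an $\varepsilon'$-mm-isomorphism; letting $\varepsilon'\downarrow\varepsilon$ would give an $\varepsilon$-mm-isomorphism — but since the three defining conditions are not jointly closed under such a limit for a fixed map, the clean statement is obtained by allowing the factor $3$: redo the estimate keeping track of constants, or simply note $\varepsilon'<3\varepsilon$ once $\varepsilon'$ close enough, yielding a $3\varepsilon$-mm-isomorphism as claimed.

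The main obstacle is the measurability bookkeeping in part (2): a parameter need not admit a genuine Borel section, so I would instead disintegrate $\cL^1$ with respect to $\varphi$, writing $\cL^1 = \int_X \lambda_x\, d\mu_X(x)$ with $\lambda_x$ supported on $\varphi^{-1}(x)$, and then pick $f(x)\in\psi(\varphi^{-1}(x))$ measurably (or define $f$ only $\mu_X$-a.e. via a measurable selection theorem, which suffices since mm-isomorphisms are considered up to a.e. equality). Once the selection is in place, the geometric estimates are exactly the three triangle inequalities above and the factor $3$ absorbs both the doubling of the exceptional set and any slack $\varepsilon'-\varepsilon$. Part (1) is essentially routine given the Prokhorov-to-Ky Fan passage recorded before the lemma; part (2) is where care is needed but no new idea beyond disintegration/selection is required.
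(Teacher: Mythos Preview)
The paper does not actually prove this lemma; it is quoted from the reference \cite{Sy:mmg}. So I compare your plan to the standard argument.

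Your part (1) is fine: the only nontrivial step is producing a parameter $\psi$ of $Y$ with $\dKF(f\circ\varphi,\psi)\le\varepsilon$ from the bound $\dP(f_*\mu_X,\mu_Y)\le\varepsilon$, and this is indeed a standard transfer/coupling lemma (Strassen plus a randomization on $I$), after which the triangle-inequality bookkeeping is exactly as you wrote.

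Part (2), however, has a genuine error. If $\sigma:X\to I$ is a Borel section of $\varphi$, then $\sigma_*\mu_X$ is \emph{not} $\cL^1$; it is a measure concentrated on the graph-like set $\sigma(X)\subset I$ (think of $X$ a point: $\sigma_*\mu_X$ is a Dirac mass). Consequently your displayed equality $f_*\mu_X=\psi_*\sigma_*\mu_X=\psi_*\cL^1=\mu_Y$ fails at the middle step, and the claim $\dP(f_*\mu_X,\mu_Y)=0$ is false. Your fallback via disintegration does not repair this: choosing $f(x)\in\psi(\varphi^{-1}(x))$ measurably still gives a map, not a Markov kernel, and there is no reason the pushforward should be $\mu_Y$.

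The correct fix is not to aim for $f_*\mu_X=\mu_Y$ but to bound $\dP(f_*\mu_X,\mu_Y)$. With your $\sigma$ chosen so that $\sigma(x)\in\varphi^{-1}(x)\cap\tilde I$ whenever that fibre meets $\tilde I$, observe that for $s\in\tilde I$ both $s$ and $\sigma(\varphi(s))$ lie in $\tilde I$ and have the same $\varphi$-image, hence $d_Y(\psi(s),f(\varphi(s)))=d_Y(\psi(s),\psi(\sigma(\varphi(s))))\le\varepsilon'$. Thus $\dKF(\psi,f\circ\varphi)\le\varepsilon'$, whence $\dP(\mu_Y,f_*\mu_X)=\dP(\psi_*\cL^1,(f\circ\varphi)_*\cL^1)\le\varepsilon'$. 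Together with $\mu_X(\tilde X)\ge 1-\varepsilon'$ and distortion $\le\varepsilon'$ on $\tilde X$ (your estimates for these two are correct), this makes $f$ an $\varepsilon'$-mm-isomorphism for every $\varepsilon'>\varepsilon$, hence in particular a $3\varepsilon$-mm-isomorphism.
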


\begin{defn}[Observable distance] \label{defn:dconc}
  For any parameter $\varphi$ of $X$, we set
  \[
  \varphi^*\Lip_1(X)
  := \{\;f\circ\varphi \mid f \in \Lip_1(X)\;\}.
  \]
  We define the \emph{observable distance $\dconc(X,Y)$ between
    two mm-spaces $X$ and $Y$} by
  \[
  \dconc(X,Y) := \inf_{\varphi,\psi} \dH(\varphi^*\Lip_1(X),\psi^*\Lip_1(Y)),
  \]
  where $\varphi : I \to X$ and $\psi : I \to Y$ run over all parameters
  of $X$ and $Y$, respectively,
  and where $\dH$
  is the Hausdorff metric with respect to the Ky Fan metric
  for the one-dimensional Lebesgue measure on $I$.
  $\dconc$ is a metric on $\cX$.
%  We say that a sequence $\{X_n\}$, $n=1,2,\dots$, of mm-spaces
%  \emph{concentrates} to an mm-space $X$
%  if $X_n$ $\dconc$-converges to $X$ as $n\to\infty$.
\end{defn}

%\begin{prop}
%  Let $\{X_n\}_{n=1}^\infty$ be a sequence of mm-spaces.
%  Then, $\{X_n\}$ is a L\'evy family if and only if $X_n$ concentrates to
%  a one-point mm-space as $n\to\infty$.
%\end{prop}

It is known that $\dconc \le \square$ and that
the concentration topology is weaker than the box topology.

\subsection{Pyramid}

\begin{defn}[Pyramid] \label{defn:pyramid}
  A subset $\cP \subset \cX$ is called a \emph{pyramid}
  if it satisfies the following {\rm(i)--(iii)}.
  \begin{enumerate}
  \item[(i)] If $X \in \cP$ and if $Y \prec X$, then $Y \in \cP$.
  \item[(ii)] For any two mm-spaces $X, X' \in \cP$,
    there exists an mm-space $Y \in \cP$ such that
    $X \prec Y$ and $X' \prec Y$.
  \item[(iii)] $\cP$ is nonempty and box closed.
  \end{enumerate}
  We denote the set of pyramids by $\Pi$.
  Note that Gromov's definition of a pyramid is only by (i) and (ii).
  (iii) is added in \cite{Sy:mmg} for the Hausdorff property of $\Pi$.

  For an mm-space $X$ we define
  \[
  \cP X := \{\;X' \in \cX \mid X' \prec X\;\},
  \]
  which is a pyramid.
  We call $\cP X$ the \emph{pyramid associated with $X$}.
\end{defn}

We observe that $X \prec Y$ if and only if $\cP X \subset \cP Y$.
It is trivial that $\cX$ is a pyramid.

We have a metric, denoted by $\rho$, on $\Pi$,
for which we omit to state the definition.
We say that a sequence of pyramids \emph{converges weakly} to
a pyramid if it converges with respect to $\rho$.
We have the following.
\begin{enumerate}
\item The map
$\iota : \cX \ni X \mapsto \cP X \in \Pi$
is a $1$-Lipschitz topological embedding map with respect to
$\dconc$ and $\rho$.
\item $\Pi$ is $\rho$-compact.
\item $\iota(\cX)$ is $\rho$-dense in $\Pi$.
\end{enumerate}
In particular, $(\Pi,\rho)$ is a compactification of $(\cX,\dconc)$.
We say that a sequence of mm-spaces \emph{converges weakly} to a pyramid
if the associated pyramid converges weakly.
Note that we identify $X$ with $\cP X$ in Section \ref{sec:intro}.

For an mm-space $X$, a pyramid $\cP$, and $t > 0$, we define
\[
tX := (X,t\,d_X,\mu_X) \quad\text{and}\quad
t\cP := \{\; tX \mid X \in \cP \;\}.
\]
We see $\cP\,tX = t\,\cP X$.
It is easy to see that $t\cP$ is continuous in $t$
with respect to $\rho$.

We have the following.

\begin{prop} \label{prop:rho-dTV}
  For any two Borel probability measures $\mu$ and $\nu$ on
  a complete separable metric space $X$, we have
  \begin{align*}
  \rho(\cP {(X,\mu)},\cP {(X,\nu)}) &\le \dconc((X,\mu),(X,\nu)) \le \square((X,\mu),(X,\nu)) \\
   &\le 2\dP(\mu,\nu) \le 2\dTV(\mu,\nu).
  \end{align*}
\end{prop}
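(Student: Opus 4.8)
The final statement, Proposition \ref{prop:rho-dTV}, is a chain of four inequalities, and the natural strategy is to establish them from the innermost (right) outward, since each of the outer quantities is defined as an infimum over a larger class of admissible data. I would prove the four bounds as four separate (mostly short) steps and then concatenate.

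First I would prove $\dP(\mu,\nu) \le \dTV(\mu,\nu)$; this is the standard fact recorded in the excerpt right after the definition of the Prokhorov distance (``It is known that $\dP \le \dTV$''), so one may simply cite it, or give the one-line argument: if $\varepsilon > \dTV(\mu,\nu)$ then for every Borel $A$ we have $\mu(B_\varepsilon(A)) \ge \mu(A) \ge \nu(A) - \varepsilon$, so $\varepsilon$ is admissible in the definition of $\dP$. Next, the bound $\square((X,\mu),(X,\nu)) \le 2\dP(\mu,\nu)$: set $\varepsilon := \dP(\mu,\nu)$; I would produce a coupling-type construction of parameters. Concretely, a Prokhorov-$\varepsilon$ bound yields (Strassen-type) a coupling $\pi$ of $\mu$ and $\nu$ with $\pi(\{(x,y) : d_X(x,y) > \varepsilon\}) \le \varepsilon$; pushing the Lebesgue measure on $I=[0,1)$ forward to $\pi$ by a parameter of $(X\times X,\pi)$ and composing with the two coordinate projections gives parameters $\varphi$ of $(X,\mu)$ and $\psi$ of $(X,\nu)$ and a subset $\tilde I \subset I$ of measure $\ge 1-\varepsilon$ on which $d_X(\varphi(s),\varphi(s)\text{-partner}) \le \varepsilon$ pointwise; hence $|\varphi^*d_X(s,t) - \psi^*d_X(s,t)| \le 2\varepsilon$ for $s,t \in \tilde I$ by the triangle inequality, and $\cL^1(\tilde I) \ge 1-\varepsilon \ge 1 - 2\varepsilon$. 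So $\square \le 2\varepsilon$. (Alternatively one invokes Lemma \ref{lem:box-mmiso}(1) with the identity map as an $\varepsilon$-mm-isomorphism — here condition (ii) is automatic with $0 \le \varepsilon$ since both spaces carry the \emph{same} metric $d_X$, and (i), (iii) hold by taking $\tilde X = X$ — giving the slightly weaker $\square \le 3\varepsilon$; the coupling argument is what gets the constant $2$.)

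The two remaining inequalities $\dconc \le \square$ and $\rho(\cP(X,\mu),\cP(X,\nu)) \le \dconc((X,\mu),(X,\nu))$ are exactly the two general facts already quoted in the excerpt: ``$\dconc \le \square$'' is stated immediately after Definition \ref{defn:dconc}, and ``$\iota$ is a $1$-Lipschitz embedding with respect to $\dconc$ and $\rho$'' is item (1) in the list following the definition of $\rho$, which says precisely $\rho(\cP X, \cP Y) \le \dconc(X,Y)$. So for these two I would just cite the relevant lines. Chaining the four bounds in the order $\rho \le \dconc \le \square \le 2\dP \le 2\dTV$ gives the displayed inequality.

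**Main obstacle.** Almost everything here is either a citation of a fact the excerpt has already granted or a routine triangle-inequality manipulation; the only step with any content is the middle one, $\square \le 2\dP$, and specifically the passage from a Prokhorov bound to a coupling concentrated near the diagonal (the Strassen direction). The care needed is that $\dP$ is defined by the one-sided inequality $\mu(B_\varepsilon(A)) \ge \nu(A) - \varepsilon$, so I must first note it is symmetric (or replace $\varepsilon$ by $\varepsilon + \delta$ and let $\delta \to 0$) before invoking the coupling characterization, and I must make sure the resulting parameter on $(X\times X,\pi)$ exists — which is guaranteed since $X \times X$ with $\pi$ is again a complete separable metric measure space and ``any mm-space has a parameter.'' None of this is deep, but it is the one place where the constant $2$ (rather than $3$) is actually earned, so I would write that argument out carefully and treat the rest as citations.
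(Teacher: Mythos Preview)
Your argument is correct. Note, however, that the paper does not actually prove this proposition: it appears in the Preliminaries section, which explicitly ``survey[s] the definitions and the facts needed in this paper'' and defers to \cite{Gmv:green}*{Chapter 3$\frac{1}{2}_+$} and \cite{Sy:mmg} for details. So there is no proof in the paper to compare against; the result is simply quoted as known.

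That said, your outline is the standard one and each step is sound. The only substantive step is $\square \le 2\dP$, and your Strassen-coupling argument is exactly the right way to get the constant $2$ rather than the $3$ that Lemma~\ref{lem:box-mmiso}(1) would give via the identity map. Your remark about passing to $\varepsilon + \delta$ (or invoking symmetry of $\dP$) before applying the coupling characterization is the correct bookkeeping, and the existence of a parameter for $(X\times X,\pi)$ is indeed guaranteed since $X\times X$ is complete and separable.
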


\subsection{Dissipation} \label{ssec:dissipation}

Dissipation is the opposite notion to concentration.
We omit to state the definition of the infinite dissipation.
Instead, we state the following proposition.
Let $\{X_n\}$, $n=1,2,\dots$, be a sequence of mm-spaces.

\begin{prop}
The sequence $\{X_n\}$ infinitely dissipates
if and only if $\cP X_n$ converges weakly to $\cX$ as $n\to\infty$.
\end{prop}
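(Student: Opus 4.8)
The plan is to reduce the statement, by two general properties of the weak topology on $\Pi$, to a concrete comparison between dissipation of $\{X_n\}$ and approximate domination of finitely many far-apart points by $X_n$. Recall first from \cite{Sy:mmg} that $\{X_n\}$ \emph{infinitely dissipates} means: for every $N\in\N$ and all $R,\varepsilon>0$, every sufficiently large $X_n$ contains Borel sets $A_1^n,\dots,A_N^n$ with $\mu_{X_n}(A_i^n)\ge(1-\varepsilon)/N$ for every $i$ and $d_{X_n}(A_i^n,A_j^n)\ge R$ for $i\ne j$. Write $\Delta_N(R)$ for the $N$-point mm-space carrying the uniform probability measure with all mutual distances equal to $R$. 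I would use three preliminary observations. (a) A finite mm-space with weights $p_k/N'$ is dominated by $\Delta_{N'}(D)$, where $D$ is its diameter, through the map collapsing a block of $p_k$ vertices to the $k$-th point; since finite mm-spaces are $\square$-dense in $\cX$ and pyramids are box closed and $\prec$-downward closed, a pyramid equals $\cX$ if and only if it contains $\Delta_N(R)$ for all $N,R$. (b) As $\Pi$ is $\rho$-compact, $\cP X_n\to\cX$ is equivalent to the assertion that every $\rho$-subsequential limit of $\{\cP X_n\}$ contains all the $\Delta_N(R)$. (c) The standard facts from \cite{Sy:mmg}: if $\cP_n\to\cP_\infty$ with respect to $\rho$, then a space lies in $\cP_\infty$ if and only if it is a $\square$-limit of a sequence of spaces lying in the $\cP_n$.

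For the implication ``dissipation $\Rightarrow$ $\cP X_n\to\cX$'', I would, given the sets $A_i=A_i^n$ supplied by dissipation for the parameters $N,R,\varepsilon$, map $X_n$ into $(\R^N,\|\cdot\|_\infty)$ by $x\mapsto(\min\{d_{X_n}(x,A_i),R\})_{i=1}^N$. This map is $1$-Lipschitz, being $1$-Lipschitz in each coordinate, and it sends each $A_i$ to a single point, the $N$ image points being pairwise at distance $R$. Its push-forward measure charges each image point with mass $\ge(1-\varepsilon)/N$ and puts total mass $\le\varepsilon$ elsewhere, hence lies within $2\varepsilon$ in total variation of the uniform measure on those points; by Proposition~\ref{prop:rho-dTV} the resulting mm-space is $O(\varepsilon)$-close in $\square$ to $\Delta_N(R)$ and is dominated by $X_n$. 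A diagonal argument over $\varepsilon\downarrow0$ and $n\to\infty$ then yields, for each $N$ and $R$, spaces dominated by $X_n$ that $\square$-converge to $\Delta_N(R)$; by (c) every $\rho$-subsequential limit of $\{\cP X_n\}$ contains $\Delta_N(R)$, and then (a)--(b) give $\cP X_n\to\cX$.

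For the converse, assuming $\cP X_n\to\cX$ and fixing $N,R,\varepsilon$, I would apply (c) to $\Delta_N(2R)\in\cX$ to obtain spaces $W_n\prec X_n$ with $\delta_n:=\square(W_n,\Delta_N(2R))\to0$, together with a $1$-Lipschitz measure-preserving map $\pi_n\colon X_n\to W_n$ and, by Lemma~\ref{lem:box-mmiso}, an $O(\delta_n)$-mm-isomorphism witnessing the box-closeness. For $n$ large its nonexceptional domain is the whole of $\Delta_N(2R)$, so $W_n$ contains $N$ points pairwise at distance $\gtrsim 2R$, around each of which a ball of radius $O(\delta_n)$ carries $\mu_{W_n}$-mass $\gtrsim 1/N$. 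Pulling these balls back along $\pi_n$ produces Borel subsets of $X_n$ of measure $\ge(1-\varepsilon)/N$ and mutual distance $\ge R$, once $n$ is large enough that the $O(\delta_n)$ corrections are small; these are the sets required in the definition of infinite dissipation, and since $N,R,\varepsilon$ were arbitrary, $\{X_n\}$ infinitely dissipates.

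The genuinely nontrivial ingredient is (c)---that $\rho$-convergence of pyramids is detected precisely by $\square$-approximability of the members of the limit---which depends on the explicit definition of the metric $\rho$ on $\Pi$ from \cite{Sy:mmg} that is suppressed in this paper. Everything else is elementary: the block-collapsing domination in (a), the $\ell^\infty$-embedding that manufactures far-apart atoms of prescribed mass, and the ball pullback through a $1$-Lipschitz map. The remaining care is bookkeeping---arranging that the several error parameters ($\varepsilon$, the weight-rounding error in (a), and the box errors $\delta_n$) are controlled simultaneously and driven to zero along a single sequence $n\to\infty$.
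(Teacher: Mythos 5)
Your proposal is essentially correct, but note that the paper itself offers no proof to compare against: this proposition is stated as a known fact from the framework of \cite{Gmv:green} and \cite{Sy:mmg} (which is also where the suppressed definitions of infinite dissipation and of the metric $\rho$ live), so what you have written is a reconstruction of the argument in the reference rather than an alternative to anything in this paper. As a reconstruction it is sound: the forward direction (the $\ell^\infty$-embedding $x\mapsto(\min\{d_{X_n}(x,A_i),R\})_i$ producing a space in $\cP X_n$ within $O(\varepsilon)$ in $\square$ of $\Delta_N(R)$ via Proposition \ref{prop:rho-dTV}), the observation that a pyramid containing every $\Delta_N(R)$ must be $\cX$ (block-collapsing domination plus $\square$-density of finite spaces and box-closedness), and the converse (pulling back small Prokhorov balls around the images of the $N$ points through the $1$-Lipschitz measure-preserving map $\pi_n$, using Lemma \ref{lem:box-mmiso}) all work as you describe. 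Two points deserve explicit acknowledgment. First, your step (c) — that $Y\in\cP_\infty$ iff $Y$ is a $\square$-limit of members of the $\cP_n$ — is precisely the characterization of weak convergence of pyramids from \cite{Sy:mmg}; since the paper never states the definition of $\rho$, your proof is only as self-contained as that citation, which is acceptable here but should be cited rather than treated as obvious. Second, your working definition of infinite dissipation (equal masses $(1-\varepsilon)/N$, mutual distance $\ge R$) is not literally the one in \cite{Sy:mmg}, which requires the separation distances $\Sep(X_n;\kappa_0,\dots,\kappa_N)$ to diverge for arbitrary $\kappa_i$ with $\sum_i\kappa_i<1$; the two are equivalent by a routine grouping of equal-mass pieces into unions (unions of $R$-separated sets remain $R$-separated), but since your converse direction only produces the equal-mass configuration, that bridging remark should be included to match the definition actually in force.
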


An easy discussion using \cite{OzSy:limf}*{Lemma 6.6}
leads to the following.

\begin{prop} \label{prop:dissipate}
The following {\rm(1)} and {\rm(2)} are equivalent to each other.
\begin{enumerate}
\item The $\kappa$-observable diameter
$\ObsDiam(X_n;-\kappa)$ diverges to infinity as $n\to\infty$
for any $\kappa \in (\,0,1\,)$.
\item $\{X_n\}$ infinitely dissipates.
\end{enumerate}
\end{prop}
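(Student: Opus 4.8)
The plan is to deduce the equivalence from the characterization of infinite dissipation recorded just above --- that $\{X_n\}$ infinitely dissipates precisely when the associated pyramids $\cP X_n$ converge weakly to the full pyramid $\cX$ --- together with the behaviour of the observable diameter on pyramids. For a pyramid $\cP$, write $\ObsDiam(\cP;-\kappa):=\sup_{Y\in\cP}\ObsDiam(Y;-\kappa)$.

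First I would record the elementary facts. Since the observable diameter is monotone nondecreasing in the Lipschitz order and $X_n$ is the $\prec$-largest member of $\cP X_n$, we have $\ObsDiam(\cP X_n;-\kappa)=\ObsDiam(X_n;-\kappa)$ for every $\kappa\in(0,1)$. Next, $\ObsDiam(Y;-\kappa)\le\diam(Y;1-\kappa)<+\infty$ for any single mm-space $Y$, whereas $\ObsDiam(\cX;-\kappa)=+\infty$ for every $\kappa\in(0,1)$: for each $L>0$ and each integer $N$ with $N(1-\kappa)>1$, the finite mm-space $\Delta_N(L)$ consisting of $N$ points with the uniform measure and all mutual distances equal to $L$ belongs to $\cX$, and the $1$-Lipschitz map placing its points equidistantly in an interval of length $L$ shows $\ObsDiam(\Delta_N(L);-\kappa)\ge(1-\kappa)L-L/(N-1)$, which tends to $+\infty$ as $L\to\infty$. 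More to the point, a pyramid $\cP$ satisfies $\ObsDiam(\cP;-\kappa)=+\infty$ for all $\kappa\in(0,1)$ \emph{if and only if} $\cP=\cX$, and the map $\cP\mapsto\ObsDiam(\cP;-\kappa)$ is continuous along weakly convergent sequences of pyramids for all but countably many $\kappa$; these are the inputs I would take from \cite{OzSy:limf}*{Lemma 6.6}.

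Granting these, the equivalence is ``easy''. If $\{X_n\}$ infinitely dissipates, then $\cP X_n\to\cX$ weakly, so $\ObsDiam(X_n;-\kappa)=\ObsDiam(\cP X_n;-\kappa)\to\ObsDiam(\cX;-\kappa)=+\infty$ for all but countably many $\kappa$; since $\ObsDiam(\,\cdot\,;-\kappa)$ is nonincreasing in $\kappa$, it follows that $\ObsDiam(X_n;-\kappa)\to+\infty$ for \emph{every} $\kappa\in(0,1)$, which is (1). For the converse I would argue by contraposition: if $\{X_n\}$ does not infinitely dissipate, then $\cP X_n$ does not converge weakly to $\cX$, so by $\rho$-compactness of $\Pi$ some subsequence converges weakly to a pyramid $\cP\ne\cX$. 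Along that subsequence, continuity gives $\ObsDiam(X_{n_k};-\kappa)\to\ObsDiam(\cP;-\kappa)$ for all but countably many $\kappa$; since $\cP\ne\cX$ there is $\kappa_0\in(0,1)$ with $\ObsDiam(\cP;-\kappa_0)<+\infty$, and choosing such a $\kappa_0$ outside the exceptional countable set shows $\ObsDiam(X_{n_k};-\kappa_0)\not\to+\infty$, so (1) fails.

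The main obstacle is exactly the content imported from \cite{OzSy:limf}*{Lemma 6.6}: one must know that the observable diameter --- a priori only a ``two-point separation'' invariant --- is nevertheless continuous along weak convergence of pyramids and detects the maximal pyramid $\cX$. Equivalently, a pyramid whose members have uniformly unbounded $\kappa$-observable diameter for every $\kappa\in(0,1)$ must already contain, up to box approximation, all of the highly separated finite spaces $\Delta_N(L)$ and hence all of $\cX$. Once this is available, the remaining steps above --- staying outside the exceptional null set of parameters, and using the trivial inclusion $\cP\subset\cX$ --- are routine.
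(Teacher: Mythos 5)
Your proposal takes essentially the same route as the paper: the paper's entire proof is precisely such an ``easy discussion'' based on \cite{OzSy:limf}*{Lemma 6.6}, namely combining the preceding characterization of infinite dissipation as weak convergence of $\cP X_n$ to $\cX$ with the limit formula/continuity of $\ObsDiam(\,\cdot\,;-\kappa)$ along weak convergence of pyramids and the fact that $\cX$ is the unique pyramid whose $\kappa$-observable diameters are all infinite. The two inputs you import are exactly the content the paper also defers to that citation, and the surrounding reductions (monotonicity in $\kappa$, compactness of $\Pi$, the simplex example) are carried out correctly, so your proposal matches the intended proof.
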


\subsection{Asymptotic concentration}

We say that a sequence of mm-spaces \emph{asymptotically concentrates}
if it is a $\dconc$-Cauchy sequence.
It is known that any asymptotically concentrating sequence converges weakly
to a pyramid.
A pyramid $\cP$ is said to be \emph{concentrated} if $\{(\Lip_1(X)/\sim,\dKF)\}_{X \in \cP}$ is precompact with respect to the Gromov-Hausdorff distance, where $f \sim g$ holds if $f-g$ is constant.

\begin{thm} \label{thm:asymp-conc}
Let $\cP$ be a pyramid.
The following {\rm(1)--(3)} are equivalent to each other.
\begin{enumerate}
\item $\cP$ is concentrated.
\item There exists a sequence of mm-spaces asymptotically concentrating
to $\cP$.
\item If a sequence of mm-spaces converges weakly to $\cP$,
then it asymptotically concentrates.
\end{enumerate}
\end{thm}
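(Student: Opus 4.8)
The plan is to run the cycle (2)$\Rightarrow$(1), (1)$\Rightarrow$(3), (3)$\Rightarrow$(2), after translating the concentration topology into the Gromov--Hausdorff geometry of the spaces of $1$-Lipschitz functions. Throughout write $\mathcal{L}X:=(\Lip_1(X)/\sim,\dKF)$ for the compact metric space attached to an mm-space $X$ in the definition of ``concentrated''. I will use three facts. (i) The assignment $X\mapsto\mathcal{L}X$ is Lipschitz from $(\cX,\dconc)$ into the set of isometry classes of compact metric spaces with $\dGH$, and $Y\prec X$ induces an isometric embedding $\mathcal{L}Y\hookrightarrow\mathcal{L}X$; both are elementary, since a parameter of $X$ realizes $\mathcal{L}X$ isometrically inside the fixed metric space of measurable maps $I\to\R$ (so the Hausdorff distance in the definition of $\dconc$ dominates $\dGH$ of the $\mathcal{L}$'s up to a constant), and precomposition with a dominating $1$-Lipschitz map embeds $\mathcal{L}Y$ into $\mathcal{L}X$. (ii) Gromov's precompactness theorem: a subset $\mathcal{A}\subseteq\cX$ is $\dconc$-totally bounded if and only if $\{\mathcal{L}X\}_{X\in\mathcal{A}}$ is $\dGH$-precompact \cite{Gmv:green}*{Chapter 3$\frac{1}{2}_+$}, \cite{Sy:mmg}; applied to $\mathcal{A}=\cP$, this re-expresses (1) as the statement that $\cP$ is a $\dconc$-totally bounded subset of $\cX$, which is how I use (1) below. (iii) The soft facts about $(\Pi,\rho)$ recorded above, together with: the $\dconc$-completion $\widehat{\cX}$ of $\cX$ embeds continuously and injectively into $(\Pi,\rho)$; if $\cP X_n\to\cP$ weakly with $\cP$ concentrated then $\inf_{Y\in\cP}\dconc(X_n,Y)\to0$; and if $\cP X_n\to\cP$ weakly and $Y\in\cP$ then there are mm-spaces $Y_n\prec X_n$ with $\dconc(Y_n,Y)\to0$ (all in \cite{Sy:mmg}).

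For (2)$\Rightarrow$(1): let $\{X_n\}$ asymptotically concentrate to $\cP$. Being $\dconc$-Cauchy, $\{\mathcal{L}X_n\}$ is $\dGH$-Cauchy by (i), hence $\dGH$-convergent to a compact metric space $K$. Fix $Y\in\cP$ and choose $Y_n\prec X_n$ with $\dconc(Y_n,Y)\to0$; then $\mathcal{L}Y_n$ embeds isometrically into $\mathcal{L}X_n$ while $\dGH(\mathcal{L}Y_n,\mathcal{L}Y)\to0$. Given $\delta>0$, fix a $\delta$-net of $K$ of cardinality $N(\delta)$: for $n$ large $\mathcal{L}X_n$ has a $2\delta$-net of cardinality $\le N(\delta)$, so its subset $\mathcal{L}Y_n$ has a $4\delta$-net of cardinality $\le N(\delta)$, whence $\mathcal{L}Y$ has a $5\delta$-net of cardinality $\le N(\delta)$; likewise $\diam\mathcal{L}Y\le\diam K+1$. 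These bounds do not depend on $Y\in\cP$, so by Gromov's compactness criterion for $\dGH$ the family $\{\mathcal{L}Y\}_{Y\in\cP}$ is $\dGH$-precompact, i.e.\ $\cP$ is concentrated.

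For (1)$\Rightarrow$(3): assume $\cP$ concentrated, hence $\dconc$-totally bounded by (ii), and let $X_n\to\cP$ weakly. By (iii), $\inf_{Y\in\cP}\dconc(X_n,Y)\to0$, so $\{X_n\}$ lies in a neighbourhood of the $\dconc$-totally bounded set $\cP$ and is itself $\dconc$-totally bounded; hence every subsequence of $\{X_n\}$ has a $\dconc$-Cauchy sub-subsequence. Each such sub-subsequence converges in $\widehat{\cX}$, and its image under the continuous extension $\iota\colon\widehat{\cX}\to\Pi$ equals $\lim_n\cP X_n=\cP$; as $\widehat{\cX}\hookrightarrow\Pi$ is injective, all these limits coincide with one point $\xi\in\widehat{\cX}$. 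A routine subsequence argument then gives $X_n\to\xi$ in $\widehat{\cX}$, so $\{X_n\}$ is $\dconc$-Cauchy with weak limit $\cP$, i.e.\ it asymptotically concentrates to $\cP$; this is (3). Finally (3)$\Rightarrow$(2): since $\iota(\cX)$ is $\rho$-dense in $\Pi$, some sequence of mm-spaces converges weakly to $\cP$, and by (3) it asymptotically concentrates to $\cP$, giving (2).

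The main obstacle is fact (ii), Gromov's precompactness theorem tying $\dconc$-total boundedness to $\dGH$-precompactness of $\{\mathcal{L}X\}$, together with the clause of (iii) that over a concentrated pyramid weak convergence forces $\dconc$-convergence. Both rest on reconstructing the metric-measure structure of a limiting object from the Gromov--Hausdorff behaviour of the associated spaces of $1$-Lipschitz functions: an Arzel\`a--Ascoli-type compactness argument together with careful estimates of observable diameters along weakly convergent sequences. The remaining steps above are formal manipulations of the Lipschitz order, the compactification $(\Pi,\rho)$, and the completion $\widehat{\cX}$.
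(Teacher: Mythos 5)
You should first note that the paper does not prove this statement at all: Theorem \ref{thm:asymp-conc} sits in the preliminaries and is quoted as a known result of the Gromov--Shioya theory (see \cite{Sy:mmg}), so there is no in-paper argument to compare with; your proposal has to stand on its own. Parts of it do: the implication (3)$\Rightarrow$(2) via $\rho$-density of $\iota(\cX)$ is fine, and your (2)$\Rightarrow$(1) is essentially correct and pleasantly soft --- the map $X\mapsto(\Lip_1(X)/\!\sim,\dKF)$ is indeed $\dGH$-Lipschitz with respect to $\dconc$, Lipschitz domination does give an isometric embedding of the function spaces, weak convergence $\cP X_n\to\cP$ does give, for each $Y\in\cP$, spaces $Y_n\prec X_n$ with $\square(Y_n,Y)\to0$, and the uniform-net argument then yields $\dGH$-precompactness of $\{(\Lip_1(Y)/\!\sim,\dKF)\}_{Y\in\cP}$.

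The genuine gap is (1)$\Rightarrow$(3), where you have outsourced exactly the hard content to your ``facts'' (ii) and (iii). Fact (ii), the asserted equivalence for an \emph{arbitrary} subset $\mathcal{A}\subset\cX$ between $\dconc$-total boundedness and $\dGH$-precompactness of the associated Lipschitz-function spaces, is not a citable ``Gromov precompactness theorem'': the direction you actually need (precompactness of the function spaces forces $\dconc$-total boundedness) is a reconstruction statement --- recovering closeness of measure-metric structures from abstract $\dGH$-closeness of $(\Lip_1,\dKF)$ --- and this is precisely the technical core of the theory in \cite{Sy:mmg}, not a black box one may invoke while proving this theorem; the results actually available there couple the function-space information with weak convergence of the pyramids. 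Worse, the clause of your fact (iii) that weak convergence to a concentrated pyramid forces $\inf_{Y\in\cP}\dconc(X_n,Y)\to0$ is, together with total boundedness, essentially the implication (1)$\Rightarrow$(3) itself, so quoting it is circular. The same applies to the injectivity of the extension $\widehat{\cX}\to\Pi$: a $1$-Lipschitz topological embedding need not extend injectively to the completion, and here injectivity amounts to the nontrivial assertion that two asymptotically concentrating sequences with the same weak limit are $\dconc$-equivalent --- again a statement of the same depth as the theorem. Your closing paragraph concedes that these inputs rest on ``an Arzel\`a--Ascoli-type compactness argument together with careful estimates,'' but that is exactly the argument that would have to be supplied; as written, the proposal proves the easy implications and assumes the hard one.
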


\subsection{Gaussian space} \label{ssec:Gaussian}

Let $\{a_i\}$, $i=1,2,\dots,n$, be a finite sequence of nonnegative real numbers.
The product
\[
\gamma^n_{\{a_i^2\}} := \bigotimes_{i=1}^n \gamma^1_{a_i^2}
\]
of the one-dimensional centered Gaussian measure $\gamma^1_{a_i^2}$
of variance $a_i^2$
is an $n$-dimensional centered Gaussian measure on $\R^n$,
where we agree that $\gamma^1_{0^2}$ is the Dirac measure at $0$,
and $\gamma^n_{\{a_i^2\}}$ is possibly degenerate.
We call the mm-space $\Gamma^n_{\{a_i^2\}} := (\R^n,\|\cdot\|,\gamma^n_{\{a_i^2\}})$
the \emph{$n$-dimensional Gaussian space with variance $\{a_i^2\}$}.
Note that, for any Gaussian measure $\gamma$ on $\R^n$,
the mm-space $(\R^n,\|\cdot\|,\gamma)$ is mm-isomorphic to $\Gamma^n_{\{a_i^2\}}$,
where $a_i^2$ are the eigenvalues of the covariance matrix of $\gamma$.

We now take an infinite sequence $\{a_i\}$, $i=1,2,\dots$,
of nonnegative real numbers.
For $1 \le k \le n$, we denote by $\pi^n_k : \R^n \to \R^k$ the natural projection,
i.e.,
\[
\pi^n_k(x_1,x_2,\dots,x_n) := (x_1,x_2,\dots,x_k),
\quad (x_1,x_2,\dots,x_n) \in \R^n.
\]
Since the projection
$\pi^n_{n-1} : \Gamma^n_{\{a_i^2\}} \to \Gamma^{n-1}_{\{a_i^2\}}$
is $1$-Lipschitz continuous and measure-preserving for any $n \ge 2$,
the Gaussian space $\Gamma^n_{\{a_i^2\}}$ is monotone
nondecreasing in $n$ with respect to the Lipschitz order,
so that, as $n\to\infty$, the associated pyramid $\cP {\Gamma^n_{\{a_i^2\}}}$ 
converges weakly to the $\square$-closure of
$\bigcup_{n=1}^\infty \cP {\Gamma^n_{\{a_i^2\}}}$,
denoted by $\cP {\Gamma^\infty_{\{a_i^2\}}}$.
We call $\cP {\Gamma^\infty_{\{a_i^2\}}}$
the \emph{virtual Gaussian space with variance $\{a_i^2\}$}.
We remark that the infinite product measure
\[
\gamma^\infty_{\{a_i^2\}} := \bigotimes_{i=1}^\infty \gamma^1_{a_i^2}
\]
is a Borel probability measure on $\R^\infty$ with respect to the product topology,
but is not necessarily Borel with respect to
the $l^2$-norm.
Only in the case where
\begin{equation}
  \label{eq:sum-finite}
  \sum_{i=1}^\infty a_i^2 < +\infty,
\end{equation}
the measure $\gamma^\infty_{\{a_i^2\}}$ is a Borel measure with respect to
the $l^2$-norm $\|\cdot\|$ which is supported in the separable Hilbert space
$H := \{\;x \in \R^\infty \mid \|x\| < +\infty\;\}$
(cf.~\cite{Bog:Gaussian}*{\S 2.3}), and consequently,
$\Gamma^\infty_{\{a_i^2\}} = (H,\|\cdot\|,\gamma^\infty_{\{a_i^2\}})$
is an mm-space.  In the case of \eqref{eq:sum-finite},
the variance of $\gamma^\infty_{\{a_i^2\}}$ satisfies
\[
\int_{\R^n} \|x\|^2 \, d\gamma^\infty_{\{a_i^2\}}(x) = \sum_{i=1}^\infty a_i^2.
\]

\section{Weak convergence of ellipsoids} \label{sec:weak-conv}

In this section we prove Theorem \ref{thm:ellipsoid}.
We also prove the convergence of Gaussian spaces
as a corollary to the theorem.

Let $\{\alpha_i\}$, $i=1,2,\dots,n$, be a sequence of positive real numbers.
The $n$-dimensional solid ellipsoid $\mathcal{E}^n$
and the $(n-1)$-dimensional ellipsoid $\mathcal{S}^{n-1}$
(defined in Section \ref{sec:intro})
are respectively obtained as the image of 
the closed unit ball $B^n(1)$ and the unit sphere $S^{n - 1}(1)$ in $\R^n$
by the linear isomorphism
$L^n_{\{\alpha_i\}} : \R^n \to \R^n$ defined by
\[
L^n_{\{\alpha_i\}}(x) := (\alpha_1 x_1,\dots,\alpha_n x_n),
\quad x = (x_1,\dots,x_n) \in \R^n.
\]
We assume that the $n$-dimensional solid ellipsoid $\mathcal{E}^n_{\{\alpha_i\}}$
is equipped with the restriction of the Euclidean distance function and
with the normalized Lebesgue measure $\epsilon^n_{\{\alpha_i\}}
:= \widetilde{\mathcal{L}^n|_{\mathcal{E}^n_{\{\alpha_i\}}}}$,
where $\widetilde{\mu} := \mu(X)^{-1} \mu$
is the \emph{normalization} of a finite measure $\mu$ on a space $X$
and $\mathcal{L}^n$ the $n$-dimensional Lebesgue measure
on $\R^n$.
The $(n - 1)$-dimensional ellipsoid $\mathcal{S}^{n - 1}_{\{\alpha_i\}}$
is assumed to be equipped with the restriction of the Euclidean distance function and
with the push-forward
$\sigma^{n - 1}_{\{\alpha_i\}} := (L^n_{\{\alpha_i\}})_*\sigma^{n - 1}$
of the normalized volume measure $\sigma^{n - 1}$
on the unit sphere $S^{n - 1}(1)$ in $\R^n$.

\emph{Throughout this paper,
let $(E_{\{\alpha_i\}}^n,e^n_{\{\alpha_i\}})$ be any one of
\[
(\mathcal{E}_{\{\alpha_i\}}^n,\epsilon^n_{\{\alpha_i\}}) \quad\text{and}\quad
(\mathcal{S}_{\{\alpha_i\}}^{n-1},\sigma^{n-1}_{\{\alpha_i\}})
\]
for any $n \ge 2$ and $\{\alpha_i\}$.}
The measure $e^n_{\{\alpha_i\}}$ is sometimes
considered as a Borel measure on $\R^n$,
supported on $E_{\{\alpha_i\}}^n$.

\begin{lem} \label{lem:domE}
Let $\{\alpha_i\}$ and $\{\beta_i\}$, $i=1,2,\dots,n$, be two sequences
of positive real numbers.
If $\alpha_i \le \beta_i$ for all $i=1,2,\dots,n$, then
$E_{\{\alpha_i\}}^n$ is dominated by $E_{\{\beta_i\}}^n$.
\end{lem}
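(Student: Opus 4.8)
The plan is to write the domination map explicitly as a diagonal linear contraction. Recall that, by definition, $E^n_{\{\gamma_i\}}$ is the image of the closed unit ball $B^n(1)$ (in the solid case) or of the unit sphere $S^{n-1}(1)$ (in the hypersurface case) under the linear isomorphism $L^n_{\{\gamma_i\}}$, carrying in the first case the normalized Lebesgue measure and in the second the push-forward of $\sigma^{n-1}$. I would therefore set
\begin{align*}
T &:= L^n_{\{\alpha_i\}}\circ\bigl(L^n_{\{\beta_i\}}\bigr)^{-1} : \R^n \to \R^n,\\
T(x) &= \Bigl(\tfrac{\alpha_1}{\beta_1}x_1,\dots,\tfrac{\alpha_n}{\beta_n}x_n\Bigr),\quad x=(x_1,\dots,x_n)\in\R^n,
\end{align*}
and propose $T$ (restricted to $E^n_{\{\beta_i\}}$) as the map realizing $E^n_{\{\alpha_i\}}\prec E^n_{\{\beta_i\}}$. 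Since $T\circ L^n_{\{\beta_i\}}=L^n_{\{\alpha_i\}}$, the map $T$ carries $E^n_{\{\beta_i\}}$ onto $E^n_{\{\alpha_i\}}$ in both cases.

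First I would check the Lipschitz bound. For all $x,y\in\R^n$,
\[
\|Tx-Ty\|^2=\sum_{i=1}^n\Bigl(\tfrac{\alpha_i}{\beta_i}\Bigr)^2(x_i-y_i)^2\le\sum_{i=1}^n(x_i-y_i)^2=\|x-y\|^2,
\]
since $0<\alpha_i/\beta_i\le 1$ by hypothesis; hence $T$ is $1$-Lipschitz on all of $\R^n$, in particular on $E^n_{\{\beta_i\}}$ with its induced Euclidean metric.

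Next I would verify $T_*e^n_{\{\beta_i\}}=e^n_{\{\alpha_i\}}$, separately in the two cases. In the hypersurface case this is immediate from $T\circ L^n_{\{\beta_i\}}=L^n_{\{\alpha_i\}}$:
\[
T_*\sigma^{n-1}_{\{\beta_i\}}=T_*\bigl(L^n_{\{\beta_i\}}\bigr)_*\sigma^{n-1}=\bigl(L^n_{\{\alpha_i\}}\bigr)_*\sigma^{n-1}=\sigma^{n-1}_{\{\alpha_i\}}.
\]
In the solid case, $T$ is linear with constant Jacobian $|\det T|=\prod_{i=1}^n\alpha_i/\beta_i>0$, so $T$ pushes $\mathcal{L}^n|_{\mathcal{E}^n_{\{\beta_i\}}}$ forward to $|\det T|^{-1}\,\mathcal{L}^n|_{\mathcal{E}^n_{\{\alpha_i\}}}$; since normalization discards this positive constant factor, $T_*\epsilon^n_{\{\beta_i\}}=\widetilde{\mathcal{L}^n|_{\mathcal{E}^n_{\{\alpha_i\}}}}=\epsilon^n_{\{\alpha_i\}}$. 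Combining this with the Lipschitz bound yields $E^n_{\{\alpha_i\}}\prec E^n_{\{\beta_i\}}$.

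This lemma poses essentially no obstacle; the only points requiring a word of care are keeping track of the normalization of the Lebesgue measure on the solid ellipsoid, and observing that the single diagonal map $T$ serves simultaneously for the solid ellipsoid and for its boundary hypersurface.
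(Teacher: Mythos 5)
Your map $T = L^n_{\{\alpha_i\}}\circ(L^n_{\{\beta_i\}})^{-1} = L^n_{\{\alpha_i/\beta_i\}}$ is exactly the map the paper uses, and your verifications of the $1$-Lipschitz bound and of measure preservation (via push-forward in the hypersurface case and the constant Jacobian plus normalization in the solid case) are correct. This is the same approach as the paper's one-line proof, just with the routine checks written out.
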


\begin{proof}
The map $L_{\{\alpha_i/\beta_i\}}^n : E_{\{\beta_i\}}^n \to E_{\{\alpha_i\}}^n$
is $1$-Lipschitz continuous and preserves their measures.
\end{proof}

\begin{prop}[Maxwell-Boltzmann distribution law] \label{prop:MB-law}
Let $\{a_{ij}\}$, $i=1,2,\dots,n(j)$, $j=1,2,\dots$,
be a sequence of positive real numbers satisfying {\rm(A0)} and {\rm(A3)}.
Then, $(\pi^{n(j)}_k)_*e^n_{\{\sqrt{n(j)-1}\,a_{ij}\}_i}$ converges weakly to
$\gamma^k_{\{a_i\}}$ as $j\to\infty$ for any fixed positive integer $k$,
where $\pi^n_k$ is defined in Subsection {\rm\ref{ssec:Gaussian}}.
\end{prop}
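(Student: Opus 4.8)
The plan is to reduce the assertion to the classical Maxwell--Boltzmann (Poincar\'e--Borel) observation by realizing all the measures involved on a single probability space. Abbreviate $n = n(j)$ and $\alpha_i := \sqrt{n-1}\,a_{ij}$. Since $\mathcal{E}^n_{\{\alpha_i\}} = L^n_{\{\alpha_i\}}(B^n(1))$ and the linear map $L^n_{\{\alpha_i\}}$ has constant Jacobian, the normalized Lebesgue measure on $\mathcal{E}^n_{\{\alpha_i\}}$ is the push-forward by $L^n_{\{\alpha_i\}}$ of the normalized Lebesgue measure on $B^n(1)$; by definition the same holds for $\sigma^{n-1}_{\{\alpha_i\}} = (L^n_{\{\alpha_i\}})_*\sigma^{n-1}$. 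Writing $e^n_{\{1\}}$ for the measure corresponding to $\alpha_i \equiv 1$ (the normalized Lebesgue measure on $B^n(1)$, resp.\ the normalized volume measure $\sigma^{n-1}$ on $S^{n-1}(1)$), we therefore have $e^n_{\{\alpha_i\}_i} = (L^n_{\{\alpha_i\}_i})_* e^n_{\{1\}}$. Combining this with the elementary identity $\pi^n_k \circ L^n_{\{\alpha_i\}_i} = L^k_{\{\alpha_i\}_{i \le k}} \circ \pi^n_k$ and with $L^k_{\{\alpha_i\}_{i\le k}} = \diag(a_{1j},\dots,a_{kj}) \cdot \sqrt{n-1}$, one obtains
\[
(\pi^n_k)_* e^n_{\{\alpha_i\}_i} = \bigl(\diag(a_{1j},\dots,a_{kj})\bigr)_*\,\mu_j, \qquad \mu_j := \bigl(\sqrt{n-1}\,\pi^n_k\bigr)_* e^n_{\{1\}},
\]
where $\sqrt{n-1}\,\pi^n_k$ denotes the scaled projection $x \mapsto \sqrt{n-1}\,\pi^n_k(x)$.

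Next I would fix, on one probability space, an i.i.d.\ sequence $g_1, g_2, \dots$ of standard Gaussian real random variables together with an independent random variable $U$ uniform on $[0,1]$, and put $S_n := (g_1^2 + \dots + g_n^2)^{1/2}$. By rotational invariance $(g_1,\dots,g_n)/S_n$ is uniformly distributed on $S^{n-1}(1)$, so $\sigma^{n-1}$ is its law; and since the radial part of the normalized Lebesgue measure on $B^n(1)$ has distribution function $r \mapsto r^n$ on $[0,1]$ and is independent of the angular part, $\epsilon^n_{\{1\}}$ is the law of $U^{1/n}(g_1,\dots,g_n)/S_n$. Hence $(\pi^n_k)_* e^n_{\{\sqrt{n-1}\,a_{ij}\}_i}$ is the law of the $\R^k$-vector $Z^{(j)}$ with $Z^{(j)}_i = \sqrt{n-1}\,a_{ij}\,g_i/S_n$ (times the extra factor $U^{1/n}$ in the solid case), $i = 1,\dots,k$. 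By (A0), $n = n(j) \to \infty$, so the strong law of large numbers gives $S_n^2/n \to 1$, hence $\sqrt{n-1}/S_n \to 1$, almost surely; moreover $U^{1/n} \to 1$ almost surely, and $a_{ij} \to a_i$ by (A3). Thus $Z^{(j)} \to (a_1 g_1, \dots, a_k g_k)$ almost surely as $j \to \infty$, and the law of $(a_1 g_1, \dots, a_k g_k)$ is precisely $\gamma^k_{\{a_i^2\}}$. Since almost sure convergence implies convergence in law, this yields the claimed weak convergence.

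The proposition is classical in spirit, and there is no genuine obstacle here (it is exactly the easy input (1.1) used for the weak convergence in Theorem~\ref{thm:ellipsoid}); the only points deserving care are the identity $\pi^n_k \circ L^n_{\{\alpha_i\}} = L^k_{\{\alpha_i\}_{i\le k}} \circ \pi^n_k$ together with the constant-Jacobian remark (so that the ellipsoid measures really are push-forwards of the ball/sphere measures), and the bookkeeping that puts the entire doubly-indexed family $\{(\pi^n_k)_* e^n_{\{\alpha_i\}_i}\}_j$ on one probability space so that the limit in $j$ is an honest almost sure limit. Note that (A1), (A2), and the boundedness of $\{a_{ij}\}$ are not used: only $n(j) \to \infty$ and $a_{ij} \to a_i$ enter. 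If one prefers to keep the two effects separate, one can instead first establish the classical statement $\mu_j \to \gamma^k_{\{1^2\}}$ weakly and then apply the convergent linear maps $\diag(a_{1j},\dots,a_{kj}) \to \diag(a_1,\dots,a_k)$, using the standard fact that weak convergence of probability measures on $\R^k$ is preserved under push-forward along a sequence of linear maps whose matrices converge (a short tightness argument); the single-probability-space computation carries out both steps simultaneously.
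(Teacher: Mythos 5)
Your proof is correct. The paper itself gives no argument here: it simply records that the statement ``follows from a straightforward and standard calculation'' and points to \cite{Sy:mmg}*{Proposition 2.1}, i.e.\ to the classical density computation showing that coordinate projections of the normalized measure on the round sphere/ball of radius $\sim\sqrt{n}$ converge to the standard Gaussian, combined implicitly with the reduction by the diagonal map $L^n_{\{\alpha_i\}}$. Your route is a self-contained probabilistic version of the same classical fact: you use the exact identities $e^n_{\{\alpha_i\}}=(L^n_{\{\alpha_i\}})_*e^n_{\{1\}}$ and $\pi^n_k\circ L^n_{\{\alpha_i\}}=L^k_{\{\alpha_i\}_{i\le k}}\circ\pi^n_k$, and then realize all the projected measures at once as laws of $\bigl(\sqrt{n(j)-1}\,a_{ij}\,g_i/S_{n(j)}\bigr)_{i\le k}$ (times $U^{1/n(j)}$ in the solid case), so that the strong law of large numbers plus (A3) gives almost sure, hence weak, convergence to the law of $(a_1g_1,\dots,a_kg_k)$, which is $\gamma^k_{\{a_i^2\}}$. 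This coupling argument buys a few things the citation does not make explicit: it treats the ellipsoid and solid ellipsoid uniformly, it absorbs the $j$-dependence of the semi-axes $a_{ij}\to a_i$ into the same almost-sure limit rather than requiring a separate continuity argument for the push-forwards under converging linear maps (though your remark that such an argument would also work is accurate), it handles degenerate limits $a_i=0$ without extra care, and it makes transparent that only $n(j)\to\infty$ and (A3) are used. The paper's reference-based proof is shorter but relies on the density computation for the round case; the mathematical content is the same Maxwell--Boltzmann/Poincar\'e--Borel phenomenon.
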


\begin{proof}
The proposition follows from a straightforward and standard
calculation (see \cite{Sy:mmg}*{Proposition 2.1}).
\end{proof}

\begin{prop} \label{prop:aij-diverge}
Let $\{a_{ij}\}$, $i=1,2,\dots,n(j)$, $j=1,2,\dots$,
be a sequence of positive real numbers.
If $\sup_i a_{ij}$ diverges to infinity as $j\to\infty$,
then $\{E^{n(j)}_{\{\sqrt{n(j) - 1} \, a_{ij}\}_i}\}$ infinitely dissipates.
\end{prop}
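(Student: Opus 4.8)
The plan is to invoke Proposition~\ref{prop:dissipate}: it suffices to show that for each fixed $\kappa\in(0,1)$ the $\kappa$-observable diameter $\ObsDiam(E_j;-\kappa)$ of $E_j:=E^{n(j)}_{\{\sqrt{n(j)-1}\,a_{ij}\}_i}$ diverges to infinity as $j\to\infty$. To bound it from below it is enough to exhibit, for each $j$, one $1$-Lipschitz test function whose pushforward has large $(1-\kappa)$-partial diameter. I would choose an index $i(j)\in\{1,\dots,n(j)\}$ with $a_{i(j),j}=\max_i a_{ij}=\sup_i a_{ij}=:A_j$ (the maximum is attained since the sequence is finite); by hypothesis $A_j\to\infty$. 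Let $f_j\colon E_j\to\R$ be the coordinate function $x\mapsto x_{i(j)}$, which is $1$-Lipschitz on $\R^{n(j)}$, hence on $E_j$. Then $\ObsDiam(E_j;-\kappa)\ge\diam((f_j)_*e_j;1-\kappa)$ by the definition of the observable diameter, so it remains to show the right-hand side diverges.

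The next step is to identify the law $(f_j)_*e_j$. Since $E_j$ is the image of the unit ball $B^{n(j)}(1)$ (resp.\ the unit sphere $S^{n(j)-1}(1)$) under the diagonal linear map with diagonal entries $\sqrt{n(j)-1}\,a_{ij}$, and its measure $e_j$ is the corresponding pushforward of the uniform measure, the coordinate $x_{i(j)}$ is distributed as $\sqrt{n(j)-1}\,A_j$ times one coordinate of the uniform distribution on $B^{n(j)}(1)$ (resp.\ $S^{n(j)-1}(1)$); note that the marginal law of a single coordinate depends only on the dimension. Denoting by $\tilde\mu_m$ the law of $\sqrt{m-1}$ times one coordinate of the uniform distribution on $B^m(1)$ (resp.\ $S^{m-1}(1)$) and using the scaling identity $\diam(c_*\mu;1-\kappa)=|c|\,\diam(\mu;1-\kappa)$, I get $\diam((f_j)_*e_j;1-\kappa)=A_j\,\diam(\tilde\mu_{n(j)};1-\kappa)$. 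The proposition thus reduces to the uniform anticoncentration estimate $c_\kappa:=\inf_{m\ge2}\diam(\tilde\mu_m;1-\kappa)>0$, since then $\ObsDiam(E_j;-\kappa)\ge c_\kappa A_j\to\infty$.

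To prove the uniform estimate, recall from a standard marginal-density computation that $\tilde\mu_m$ converges weakly to the standard Gaussian $\gamma^1_1$ as $m\to\infty$ (the one-dimensional instance of the Maxwell--Boltzmann law, i.e.\ Proposition~\ref{prop:MB-law} with $k=1$ and all $a_{ij}=1$; the convergence in fact holds in total variation by Scheff\'e's lemma). For the symmetric unimodal measure $\gamma^1_1$ one has $\diam(\gamma^1_1;1-\kappa')=2\Phi^{-1}(1-\kappa'/2)$, which is positive and continuous in $\kappa'\in(0,1)$, where $\Phi$ is the standard normal distribution function. Writing $\delta_m:=\dTV(\tilde\mu_m,\gamma^1_1)\to0$, any interval $J$ with $\tilde\mu_m(J)\ge1-\kappa$ satisfies $\gamma^1_1(J)\ge1-\kappa-\delta_m$, hence $\diam(J)\ge2\Phi^{-1}(1-(\kappa+\delta_m)/2)$; taking the infimum over such $J$ and then letting $m\to\infty$ gives $\liminf_{m\to\infty}\diam(\tilde\mu_m;1-\kappa)\ge2\Phi^{-1}(1-\kappa/2)>0$. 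Therefore $\inf_{m\ge M_0}\diam(\tilde\mu_m;1-\kappa)>0$ for some $M_0$, and for each of the finitely many remaining $m\in\{2,\dots,M_0-1\}$ the measure $\tilde\mu_m$ is non-atomic, so $\diam(\tilde\mu_m;1-\kappa)>0$; taking the minimum yields $c_\kappa>0$.

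The only genuinely delicate point is this uniform-in-$m$ lower bound on $\diam(\tilde\mu_m;1-\kappa)$: positivity for each individual $m$ is immediate, but since the dimensions $n(j)$ need not remain bounded one must also rule out degeneration as $m\to\infty$, which is precisely what the convergence of $\tilde\mu_m$ to the nondegenerate Gaussian supplies. The remaining ingredients — the reduction via Proposition~\ref{prop:dissipate}, the choice of the coordinate test function, and the scaling identity for partial diameters — are routine.
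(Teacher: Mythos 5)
Your proof is correct. The paper's argument also reduces the statement to Proposition \ref{prop:dissipate} and to a first-coordinate marginal, but it converts the hypothesis $\sup_i a_{ij}\to\infty$ differently: after permuting coordinates so that $a_{1j}\to\infty$, it truncates the semi-axes by $\hat a_{ij}:=\min\{a_{ij},a\}$ for a fixed level $a$, uses Lemma \ref{lem:domE} and the monotonicity of the observable diameter under the Lipschitz order to pass to the truncated ellipsoid, applies the Maxwell--Boltzmann law (Proposition \ref{prop:MB-law}) to identify the limit of the one-dimensional marginal as the Gaussian of variance $a^2$ (possible since $\hat a_{1j}=a$ for large $j$), and finally lets $a\to\infty$. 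You instead keep the extremal coordinate, factor out the diverging factor $A_j$ via the scaling identity for partial diameters, and prove the uniform-in-dimension anticoncentration bound $\inf_{m\ge2}\diam(\tilde\mu_m;1-\kappa)>0$ for the one-coordinate marginal of the ball/sphere (total-variation convergence to the standard Gaussian for large $m$, non-atomicity for the finitely many remaining $m$). The truncation trick buys the paper an exact Gaussian limit and so avoids any uniformity argument, at the cost of the extra limit $a\to\infty$; your route needs the (routine) uniform bound but has the small bonus of nowhere using $n(j)\to\infty$, so it covers bounded or oscillating dimensions as stated, whereas the paper's appeal to the Maxwell--Boltzmann law implicitly assumes divergent dimensions. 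The only points deserving a word of care in your write-up are the reduction of the partial diameter on $\R$ to intervals (a Borel set of diameter $d$ sits in a closed interval of length $d$) and the positivity of $\diam(\tilde\mu_m;1-\kappa)$ for fixed $m$, which, since the partial diameter is an infimum, needs the standard observation that shrinking intervals of mass $\ge 1-\kappa$ would force an atom; both are immediate.
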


\begin{proof}
Assume that $\sup_i a_{ij}$ diverges to infinity as $j\to\infty$.
Exchanging the coordinates, we assume that
$a_{1j}$ diverges to infinity as $j\to\infty$.
We take any positive real number $a$ and fix it.
Let $\hat{a}_{ij} := \min\{a_{ij},a\}$.
Note that $\hat{a}_{1j} = a$ for all sufficiently large $j$.
By Lemma \ref{lem:domE}, the $1$-Lipschitz continuity of $\pi^{n(j)}_1$,
and the Maxwell-Boltzmann distribution law (Proposition \ref{prop:MB-law}),
we have
\begin{align*}
&\liminf_{j\to\infty} \ObsDiam(E^{n(j)}_{\{\sqrt{n(j) - 1} \, a_{ij}\}_i};-\kappa)\\
&\ge \liminf_{j\to\infty} \ObsDiam(E^{n(j)}_{\{\sqrt{n(j) - 1} \, \hat{a}_{ij}\}_i};-\kappa) \\
&\ge \lim_{j\to\infty} \diam((\pi^{n(j)}_1)_* e^{n(j)}_{\{\sqrt{n(j) - 1} \, \hat{a}_{ij}\}_i};1-\kappa),\\
&= \diam(\gamma^1_a;1-\kappa),
\end{align*}
which diverges to infinity as $a\to\infty$.
Proposition \ref{prop:dissipate} leads us to the dissipation property
for $\{E^{n(j)}_{\{\sqrt{n(j) - 1} \, a_{ij}\}_i}\}$.
\end{proof}

Let $\{a_i\}$, $i=1,2,\dots,n$, be a sequence of positive real numbers.
Let us construct a transport map
from $\gamma^n_{\{a_i^2\}}$ to $\epsilon^n_{\{\sqrt{n-1}\,a_i\}}$.
For $r \ge 0$ we determine a real number $R = R(r)$
in such a way that
$0 \le R \le \sqrt{n-1}$
and
$\gamma^n_{\{1^2\}}(B_r(o)) = \epsilon^n_{\sqrt{n-1}}(B_R(o))$,
where $\epsilon^n_{\sqrt{n-1}}$ denotes the normalized Lebesgue measure on
$\mathcal{E}^n_{\sqrt{n-1}} := B_{\sqrt{n-1}}(o) \subset \R^n$.
Define an isotropic map $\bar{\varphi} : \R^n \to \mathcal{E}^n_{\sqrt{n-1}}$ by
\[
\bar{\varphi}(x) := \frac{R(\|x\|)}{\|x\|} x, \qquad x \in \R^n.
\]
We remark that $\bar{\varphi}_*\gamma^n_{\{1^2\}} = \epsilon^n_{\sqrt{n-1}}$.
It holds that
\[
R = (n-1)^{\frac{1}{2}} \left( \frac{1}{I_{n-1}} \int_0^r t^{n-1} e^{-\frac{t^2}{2}} \, dt \right)^{\frac{1}{n}},
\]
where
\[
I_m := \int_0^\infty t^m e^{-\frac{t^2}{2}} \, dt.
\]
Note that $R$ is strictly monotone increasing in $r$.
Let $L := L^n_{\{a_i\}}$ and $r := r(x) := \|L^{-1}(x)\|$.
We define
\[
\varphi^{\mathcal{E}} := L \circ \bar{\varphi} \circ L^{-1} : \R^n \to \mathcal{E}^n_{\{\sqrt{n-1}\,a_i\}}.
\]
The map $\varphi^{\mathcal{E}}$ is a transport map
from $\gamma^n_{\{a_i^2\}}$ to $\epsilon^n_{\{\sqrt{n-1}\,a_i\}}$,
i.e.,
$\varphi^{\mathcal{E}}_*\gamma^n_{\{a_i^2\}} = \epsilon^n_{\{\sqrt{n-1}\,a_i\}}$.
It holds that $\varphi^{\mathcal{E}}(x) = \frac{R}{r} x$ if $x \neq o$.
We denote by
$\varphi^{\mathcal{S}} : \R^n \setminus \{o\} \to \mathcal{S}^{n - 1}_{\{\sqrt{n-1}\,a_i\}}$
the central projection with center $o$, i.e.,
\[
\varphi^{\mathcal{S}}(x) = \frac{\sqrt{n-1}}{r} \, x,
\qquad x \in \R^n \setminus \{o\},
\]
which is a transport map from $\gamma^n_{\{a_i^2\}}$
to $\sigma^{n - 1}_{\{\sqrt{n-1}\,a_i\}}$.

For an integer $N$ with $1 \le N \le n$
and for $\varepsilon > 0$, we define 
\[
D^n_{N,\varepsilon} := \{\,x \in \R^n \setminus \{o\} \mid
\frac{|x_j|}{\|x\|} < \varepsilon \ \text{for any $j = 1,\dots,N-1$}\,\}.
\]
For $0 < \theta < 1$, let
\[
F^n_\theta := \{\,x \in \R^n \mid \|L^{-1}(x)\| \ge \theta\sqrt{n}\,\}.
\]

\begin{lem} \label{lem:Lip}
We assume that
  \begin{enumerate}
  \item[(i)] $a_i \ge a$ for any $i = 1,2,\dots,n$,
  \item[(ii)] $a_i = a$ for any $i$ with $N \le i \le n$
  and for a positive integer $N$ with $N \le n$.
  \end{enumerate}
Then, there exists a universal positive real number $C$ 
such that,
for any two real numbers $\theta$ and $\varepsilon$ with $0 < \theta < 1$
and $0 < \varepsilon \le 1/N$,
the operator norms of the differentials of $\varphi^{\mathcal{E}}$
and $\varphi^{\mathcal{S}}$ satisfy
\[
\|d\varphi^{\mathcal{E}}_x\| \le \frac{\sqrt{1+CN\varepsilon}}{\theta}
\quad\text{and}\quad
\|d\varphi^{\mathcal{S}}_x\| \le \frac{\sqrt{1+CN\varepsilon}}{\theta}
\]
for any $x \in D^n_{N,\varepsilon} \cap F^n_\theta$.
\end{lem}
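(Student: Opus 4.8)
The plan is to exploit that both $\varphi^{\mathcal E}$ and $\varphi^{\mathcal S}$ are conjugates by the diagonal linear map $L:=L^n_{\{a_i\}}$ of a purely radial map: writing $\varphi=L\circ\Psi\circ L^{-1}$ with $\Psi(y)=h(\|y\|)\,y/\|y\|$, we take $h=R$ for $\varphi^{\mathcal E}$ and $h\equiv\sqrt{n-1}$ for $\varphi^{\mathcal S}$. A direct computation gives, at $y\ne o$ with $s=\|y\|$ and $u=y/\|y\|$,
\[
d\Psi_y=\frac{h(s)}{s}\,I+\Bigl(h'(s)-\frac{h(s)}{s}\Bigr)uu^{\top}.
\]
The only fact about $h$ needed is $h'(s)\le h(s)/s$: trivial for constant $h$, and for $h=R$ a consequence of the explicit formula for $R$ together with the identity $\int_0^r t^{n-1}e^{-t^2/2}(n-t^2)\,dt=r^ne^{-r^2/2}$, which forces $r^ne^{-r^2/2}\le n\int_0^r t^{n-1}e^{-t^2/2}\,dt$. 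Since $dL=L$ is symmetric and $L(uu^{\top})L^{-1}=(Lu)(L^{-1}u)^{\top}$, conjugation by $L$ turns this into the explicit formula
\[
d\varphi_x=\alpha\,I+\frac{c}{r^{2}}\,x\,z^{\top},\qquad r:=\|L^{-1}x\|,\ \ z:=L^{-2}x,\ \ \alpha:=\frac{h(r)}{r}\ge0,\ \ c:=h'(r)-\frac{h(r)}{r},
\]
using $Lu=x/r$ and $L^{-1}u=z/r$ with $u=L^{-1}x/r$. By the above $c\in[-\alpha,0]$, so $|c|\le\alpha$; and on $F^n_\theta$ one has $r\ge\theta\sqrt n$, hence $\alpha\le\sqrt{n-1}/(\theta\sqrt n)<1/\theta$. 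These are the only uses of $x\in F^n_\theta$ and $R\le\sqrt{n-1}$.

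Next I would reduce $\|d\varphi_x\|$ to a $2\times2$ computation. Since $d\varphi_x$ acts as $\alpha\,\mathrm{id}$ on $z^{\perp}$, in an orthonormal basis consisting of $z/\|z\|$, a unit vector of $\mathrm{span}\{x,z\}$ orthogonal to it, and anything else, $d\varphi_x$ is block diagonal with an $\alpha I_{n-2}$ block and a lower-triangular $2\times2$ block whose diagonal entries are $p:=\alpha+\beta\cos\phi$ and $\alpha$ and whose lower-left entry is $q:=\beta\sin\phi$, where $\beta:=c\,\|x\|\,\|z\|/r^{2}\le0$ and $\phi\in[0,\pi/2)$ is the angle between $x$ and $z$, so $\cos\phi=\langle x,z\rangle/(\|x\|\|z\|)=r^{2}/(\|x\|\|z\|)>0$ (the case $x\parallel z$ being the trivial sub-case $q=0$). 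Its Gram matrix has trace $p^{2}+q^{2}+\alpha^{2}$ and determinant $p^{2}\alpha^{2}$, so
\[
\|d\varphi_x\|^{2}=\tfrac12\Bigl(p^{2}+q^{2}+\alpha^{2}+\sqrt{(p^{2}-\alpha^{2})^{2}+q^{2}(2p^{2}+2\alpha^{2}+q^{2})}\Bigr).
\]

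The point of $D^n_{N,\varepsilon}$ and of hypothesis (ii) is to force $z$ to be close to $a^{-2}x$: by (ii), $z=a^{-2}x+w$ with $w$ supported on coordinates $j<N$ and $|w_j|=|x_j|\,|a_j^{-2}-a^{-2}|\le|x_j|a^{-2}$ (using (i)), so on $D^n_{N,\varepsilon}$, $\|w\|\le\sqrt{N-1}\,\varepsilon\,a^{-2}\|x\|$. With $t:=(N-1)\varepsilon^{2}$ — note $t\le(N-1)/N^{2}\le\tfrac14$ and, since $\varepsilon\le1/N$, also $t\le\varepsilon\le N\varepsilon$ — a short computation from $r^{2}=\sum_{j<N}x_j^{2}a_j^{-2}+a^{-2}\sum_{i\ge N}x_i^{2}$ and $a_j\ge a$ gives $\|x\|^{2}\ge a^{2}r^{2}$ and $\|x\|\,\|z\|\le a^{-2}\|x\|^{2}\sqrt{1+t}\le r^{2}\sqrt{1+t}/(1-t)$. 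Hence $\cos\phi\ge(1-t)/\sqrt{1+t}\ge\tfrac35$ and $\sin^{2}\phi=1-\cos^{2}\phi\le(3t-t^{2})/(1+t)\le 3t$; and $|\beta|=|c|\,\|x\|\|z\|/r^{2}\le\alpha\sqrt{1+t}/(1-t)<\tfrac32\alpha$. Consequently $p\in(-\tfrac12\alpha,\alpha]$, so $p^{2}\le\alpha^{2}$; also $2\alpha+\beta\cos\phi\ge2\alpha+\beta>\tfrac12\alpha>0$, so that, when $\beta\ne0$,
\[
\lvert p^{2}-\alpha^{2}\rvert=\lvert\beta\rvert\cos\phi\,(2\alpha+\beta\cos\phi)\ \ge\ \tfrac14\,\lvert\beta\rvert\,\alpha\ >\ 0 .
\]

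Finally I substitute into the eigenvalue formula. If $\beta=0$ then $q=0$ and $\|d\varphi_x\|^{2}=\alpha^{2}<1/\theta^{2}$; otherwise the lower bound on $\lvert p^{2}-\alpha^{2}\rvert$, together with $q^{2}=\beta^{2}\sin^{2}\phi\le3t\beta^{2}$ and $2p^{2}+2\alpha^{2}+q^{2}\le6\alpha^{2}$, makes the square root effectively Lipschitz: cancelling the powers of $\lvert\beta\rvert$ and using $\sqrt{1+v}\le1+v/2$ yields $\sqrt{(p^{2}-\alpha^{2})^{2}+q^{2}(2p^{2}+2\alpha^{2}+q^{2})}\le\lvert p^{2}-\alpha^{2}\rvert+C't\alpha^{2}$ with $C'$ universal. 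Plugging this back and using $p^{2}\le\alpha^{2}$ and $q^{2}\le3t\beta^{2}\le Ct\alpha^{2}$ gives $\|d\varphi_x\|^{2}\le\alpha^{2}+Ct\alpha^{2}\le(1+Ct)/\theta^{2}\le(1+CN\varepsilon)/\theta^{2}$, i.e.\ $\|d\varphi_x\|\le\sqrt{1+CN\varepsilon}/\theta$ with $C$ universal, which is the assertion for both $\varphi^{\mathcal E}$ and $\varphi^{\mathcal S}$. I expect the $2\times2$ eigenvalue estimate to be the crux: obtaining the leading constant exactly $1$ with an $O(N\varepsilon)$ — not merely $O(\sqrt{N\varepsilon})$ — correction hinges on the fact, supplied by the geometry of $D^n_{N,\varepsilon}$ through $\|x\|\|z\|/r^{2}<\tfrac32$, that $\cos\phi$ is bounded below and $\beta$ stays away from $-2\alpha$, i.e.\ away from the coalescence of the two singular values where the square root ceases to be Lipschitz.
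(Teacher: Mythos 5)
Your proof is correct, and it reaches the stated bound by a genuinely different route from the paper's. You exploit the exact structure $d\varphi_x=\alpha I+\frac{c}{r^2}\,x z^\top$ (a rank-one perturbation of a homothety, obtained by conjugating the radial map by $L$), reduce the operator norm to an explicit $2\times2$ singular-value computation, and control the perturbation through two facts: the sign and size bound $c=h'(r)-h(r)/r\in[-\alpha,0]$, which for $h=R$ you derive from the identity $r^{n}e^{-r^2/2}=\int_0^r t^{n-1}e^{-t^2/2}(n-t^2)\,dt$, and the near-collinearity of $x$ and $z=L^{-2}x$ on $D^n_{N,\varepsilon}$, quantified by $\sin^2\phi\le 3t$ and $\|x\|\|z\|/r^2\le\sqrt{1+t}/(1-t)$ with $t=(N-1)\varepsilon^2\le\min\{1/4,N\varepsilon\}$; all the intermediate inequalities I checked ($\|x\|^2\ge a^2r^2$, $|\beta|<\tfrac32\alpha$, $p\in(-\tfrac12\alpha,\alpha]$, $|p^2-\alpha^2|\ge\tfrac14|\beta|\alpha$, and the trace/determinant formula for the $2\times2$ block) are valid. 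The paper instead estimates $\|d\varphi^{\mathcal{E}}_x(v)\|^2$ directly for an arbitrary unit vector $v$, absorbing the anisotropic cross terms into $O(N\varepsilon^2)$ and $O(N\varepsilon)$ errors, and bounds the radial quantity $g=r\frac{\partial}{\partial r}(R/r)$ not by monotonicity but via $0\le\partial R/\partial r\le\sqrt{n}/r\le 1/\theta$ (using $f(r)\ge r^n/n$ and $r\ge\theta\sqrt{n}$) together with $R/r<1/\theta$, arriving at $t^2(\partial R/\partial r)^2+(1-t^2)R^2/r^2+O(\theta^{-2}N\varepsilon)$. What your route buys: the set $F^n_\theta$ enters only through $\alpha=h(r)/r<1/\theta$, the monotonicity $R'\le R/r$ replaces the paper's separate quantitative bound on $R'$, and your error term is of order $N\varepsilon^2\,\alpha^2$, sharper than the $O(N\varepsilon)$ demanded by the lemma. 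One small remark: your closing claim that the lower bound $|p^2-\alpha^2|\ge\tfrac14|\beta|\alpha$ is the crux for getting an $O(N\varepsilon)$ (rather than square-root) correction is overstated, since the crude estimate $\sqrt{A^2+B}\le A+\sqrt{B}$ already gives a correction of order $\sqrt{t}\,\alpha^2=\sqrt{N-1}\,\varepsilon\,\alpha^2\le N\varepsilon\,\alpha^2$, which suffices for the lemma; your refinement only improves the order and does not affect correctness.
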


\begin{proof}
Let $x \in D^n_{N,\varepsilon} \cap F^n_\theta$ be any point.
We first estimate $\|d\varphi^{\mathcal{E}}_x\|$.
Take any unit vector $v \in \R^n$.
We see that
\begin{align*}
\|d\varphi^{\mathcal{E}}_x(v)\|^2
&= \sum_{j=1}^n \left( \frac{\partial}{\partial r}\left(\frac{R}{r}\right)
\frac{\partial r}{\partial x_j} \langle x,v\rangle
+ \frac{R}{r} v_j \right)^2\\
&= \frac{1}{r^2} \left(\frac{\partial}{\partial r}\left(\frac{R}{r}\right)\right)^2
\langle x,v\rangle^2 \sum_{j=1}^n \frac{x_j^2}{a_j^4}\\
&\quad + 2 \frac{R}{r^2} \frac{\partial}{\partial  r}\left(\frac{R}{r}\right)
\langle x,v\rangle \sum_{j=1}^n \frac{v_j x_j}{a_j^2} + \frac{R^2}{r^2}.
\end{align*}
It follows from (i), (ii), and $x \in D^n_{N,\varepsilon}$ that
\begin{align*}
\frac{a^2 r^2}{\|x\|^2}
&= 1 + \sum_{j=1}^{N-1} \left(\frac{a^2}{a_j^2} - 1\right) \frac{x_j^2}{\|x\|^2}
= 1+O(N\varepsilon^2)\\
\intertext{and so}
\frac{ar}{\|x\|} &= 1+O(N\varepsilon^2), \qquad
\frac{\|x\|}{ar} = 1+O(N\varepsilon^2).
\end{align*}
We also have
\begin{align*}
\frac{a^4}{\|x\|^2} \sum_{j=1}^n \frac{x_j^2}{a_j^4}
&= 1 + \sum_{j=1}^{N-1} \left(\frac{a^4}{a_j^4} - 1\right) \frac{x_j^2}{\|x\|^2}
= 1+O(N\varepsilon^2),\\
\frac{a^2}{\|x\|} \sum_{j=1}^n \frac{v_j x_j}{a_j^2}
&= \sum_{j=1}^n \frac{v_j x_j}{\|x\|}
+ \sum_{j=1}^{N-1} \left(\frac{a^2}{a_j^2} - 1\right) \frac{v_j x_j}{\|x\|}
= \frac{\langle x,v\rangle}{\|x\|} + O(N\varepsilon)
\end{align*}
By these formulas, setting $t := \langle x, v\rangle / \|x\|$
and $g := r \frac{\partial}{\partial r} \left(\frac{R}{r}\right)$,
we  have
\begin{align} \label{eq:dphi}
\|d\varphi^{\mathcal{E}}_x(v)\|^2
&= t^2 g^2 (1+O(N\varepsilon^2))
+ \frac{2t^2 Rg}{r} (1+O(N\varepsilon^2))\\
&\quad + \frac{2tRg}{r} O(N\varepsilon) + \frac{R^2}{r^2}. \notag
\end{align}
We are going to estimate $g$.
Letting $f(r) := \int_0^r t^{n-1} e^{-\frac{t^2}{2}} \, dt$,
we have
\[
\frac{\partial R}{\partial r}
= \sqrt{n-1} \, n^{-1} I_{n-1}^{-\frac{1}{n}} f(r)^{\frac{1}{n}-1} r^{n-1} e^{-\frac{r^2}{2}}
\le n^{-\frac{1}{2}} f(r)^{-1} r^{n-1} e^{-\frac{r^2}{2}},
\]
which together with $f(r) \ge \int_0^r t^{n-1}\,dt = \frac{r^n}{n}$
and $r \ge \theta\sqrt{n}$
yields
\[
0 \le \frac{\partial R}{\partial r} \le \frac{\sqrt{n}}{r}
\le \frac{1}{\theta}.
\]
Since $R \le \sqrt{n-1}$ and $r \ge \theta\sqrt{n}$, we have
$0 \le R/r < 1/\theta$.
Therefore,
\[
|g| = \left| \frac{\partial R}{\partial r} - \frac{R}{r} \right|
\le \frac{1}{\theta}.
\]
Thus, \eqref{eq:dphi} is reduced to
\begin{align*}
\|d\varphi^{\mathcal{E}}_x(v)\|^2
&= t^2 g^2 + \frac{2t^2Rg}{r} + \frac{R^2}{r^2}
+ O(\theta^{-2}N\varepsilon)\\
&= t^2 \left(\frac{\partial R}{\partial r}\right)^2
+ (1-t^2) \frac{R^2}{r^2}
+ O(\theta^{-2}N\varepsilon)\\
&\le \theta^{-2} + O(\theta^{-2}N\varepsilon).
\end{align*}
This completes the required estimate of $\|d\varphi^{\mathcal{E}}_x(v)\|$.

If we replace $R$ with $\sqrt{n-1}$,
then $\varphi^{\mathcal{E}}$ becomes $\varphi^{\mathcal{S}}$
and the above formulas are all true also for $\varphi^{\mathcal{S}}$.
This completes the proof.
\end{proof}

We now give an infinite sequence $\{a_i\}$, $i=1,2,\dots$, of positive real numbers and a positive real number $a$.
Consider the following two conditions.
\begin{enumerate}
\item[(a1)] $a_i \ge a$ for any $i$.
\item[(a2)] $a_i = a$ for any $i \ge N$ and for a positive integer $N$.
\end{enumerate}

\begin{lem} \label{lem:DF}
If we assume {\rm(a2)},
then, for any real numbers $0 < \theta < 1$ and $\varepsilon > 0$, we have
\begin{align}
\lim_{n\to\infty} e^n_{\{\sqrt{n-1}\,a_i\}}(D^n_{N,\varepsilon})
&= 1, \tag{1}\\
\lim_{n\to\infty} \gamma^n_{\{a_i^2\}}(D^n_{N,\varepsilon} \cap F^n_\theta)
&= 1. \tag{2}
\end{align}
\end{lem}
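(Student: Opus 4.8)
The plan is to reduce both statements to the standard concentration of the uniform and Gaussian measures near the "equator" and the sphere respectively. For (1), note that under (a2) the measure $e^n_{\{\sqrt{n-1}\,a_i\}}$ is the pushforward of the uniform measure on $B^n(1)$ or $S^{n-1}(1)$ by $L^n_{\{a_i\}}$, and the defining inequalities $|x_j|/\|x\| < \varepsilon$ for $j = 1,\dots,N-1$ involve only the first $N-1$ coordinates. Since the linear map $L^n_{\{a_i\}}$ scales the first $N-1$ coordinates by the fixed constants $a_1,\dots,a_{N-1}$ and all remaining coordinates by the same constant $a$, the ratio $|x_j|/\|x\|$ for the image point is comparable, up to the fixed factor $\max_i(a_i/a)$, to $|y_j|/\|y'\|$ where $y$ is the preimage in the ball/sphere and $y'$ is its tail. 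So it suffices to show that a point chosen uniformly from $B^n(1)$ (resp.\ $S^{n-1}(1)$) has $|y_j|/\|y\|$ of order $n^{-1/2}$ with high probability for each of the finitely many fixed indices $j < N$. This is classical: for the sphere $|y_j|$ is of order $n^{-1/2}$ (each coordinate has variance $1/n$), and for the ball the radial part concentrates near $1$, so the same bound holds. A union bound over the $N-1$ fixed indices, with $N$ fixed and $\varepsilon > 0$ fixed, gives probability tending to $1$.

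For (2), I would treat the two conditions $D^n_{N,\varepsilon}$ and $F^n_\theta$ separately and combine by a union bound on complements. Under $\gamma^n_{\{a_i^2\}}$, the first $N-1$ coordinates are independent Gaussians with fixed variances $a_1^2,\dots,a_{N-1}^2$, hence $O(1)$, while $\|x\|^2 = \sum_{i=1}^n x_i^2$ has expectation $\sum_{i<N} a_i^2 + (n-N+1)a^2$, which grows linearly in $n$; by the law of large numbers (or Chebyshev) $\|x\|^2/n \to a^2$ in probability, so $|x_j|/\|x\| \to 0$ for each fixed $j < N$, giving $\gamma^n_{\{a_i^2\}}(D^n_{N,\varepsilon}) \to 1$. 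For $F^n_\theta$ we need $\|L^{-1}(x)\| \ge \theta\sqrt n$. Since $L^{-1} = L^n_{\{1/a_i\}}$, we have $\|L^{-1}(x)\|^2 = \sum_{i=1}^n x_i^2/a_i^2$, and under $\gamma^n_{\{a_i^2\}}$ this is a sum of independent random variables: the first $N-1$ terms are $O(1)$ and the remaining $n-N+1$ terms are i.i.d.\ $\chi^2_1$ variables (variance one, scaled by $1/a^2 \cdot a^2 = 1$ each). So $\|L^{-1}(x)\|^2/n \to 1 > \theta^2$ in probability, giving $\gamma^n_{\{a_i^2\}}(F^n_\theta) \to 1$; note $\theta < 1$ is exactly what makes this work. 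Intersecting, $\gamma^n_{\{a_i^2\}}(D^n_{N,\varepsilon}\cap F^n_\theta) \to 1$.

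The routine part is the concentration estimates, which are all one-sided Chebyshev or standard Gaussian tail bounds on sums of independent variables with a bounded number of "bad" coordinates. The one point that requires a little care is the interaction between $D^n_{N,\varepsilon}$ and the normalization $\|x\|$: in (1) one must verify that after applying $L^n_{\{a_i\}}$ to a uniform point on $B^n(1)$ or $S^{n-1}(1)$, the norm $\|L^n_{\{a_i\}}(y)\|$ is still of order $\sqrt n$ with high probability (so that the numerator $|a_j y_j| = O(n^{-1/2})$ genuinely beats it); this again follows because all but $N-1$ of the squared coordinates are scaled by the same $a^2$, so $\|L^n_{\{a_i\}}(y)\|^2 \ge a^2\sum_{i\ge N} y_i^2$, which concentrates near $a^2$ on the sphere and is of comparable order on the ball. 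I do not expect any genuine obstacle; the main thing is to organize the union bound over the fixed finite index set $\{1,\dots,N-1\}$ together with the radial concentration.
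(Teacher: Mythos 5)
Your proposal is correct: the estimates you use (coordinate variance of order $1/n$ for a uniform point on $S^{n-1}(1)$ or $B^n(1)$, radial concentration of the ball, the law of large numbers for $\sum_i x_i^2/a_i^2$ and for $\|x\|^2$ under $\gamma^n_{\{a_i^2\}}$, and union bounds over the fixed indices $1,\dots,N-1$ and over the complements of $D^n_{N,\varepsilon}$ and $F^n_\theta$) are all standard and fit together as you describe, and you correctly flag and handle the only delicate point, namely that the denominator must be controlled by the tail norm $a\bigl(\sum_{i\ge N} y_i^2\bigr)^{1/2}$, which only uses (a2); the one slip is cosmetic ($\chi^2_1$ has variance $2$, not $1$, but only its mean and a variance bound are needed). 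The paper's proof reaches the same conclusion by a shorter route that reuses machinery already set up in the section: since $D^n_{N,\varepsilon}$ is a cone and the transport maps $\varphi^{\mathcal{E}}$, $\varphi^{\mathcal{S}}$ act by scalar multiplication on each ray, one has $\epsilon^n_{\{\sqrt{n-1}\,a_i\}}(D^n_{N,\varepsilon})=\sigma^{n-1}_{\{\sqrt{n-1}\,a_i\}}(D^n_{N,\varepsilon})=\gamma^n_{\{a_i^2\}}(D^n_{N,\varepsilon})$, so statement (1) for the solid ellipsoid and the ellipsoid collapses to a single computation, which is then settled via the Maxwell--Boltzmann law (Proposition \ref{prop:MB-law}): the first $N-1$ coordinates stay tight while the norm on the ellipsoid is forced to be of size $a\sqrt{n}$ by the constraint $\|L^{-1}x\|=\sqrt{n-1}$; and $\gamma^n_{\{a_i^2\}}(F^n_\theta)\to 1$ is quoted from \cite{Sy:mmg}*{Lemma 7.41} (Gaussian radial concentration). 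Your approach buys self-containedness -- you never need the cone-invariance observation, the transport maps, or the cited lemma, only Chebyshev-type bounds done by hand -- while the paper's approach buys brevity and unifies the two cases of $e^n$ through exactly the objects that the surrounding proof of Lemma \ref{lem:1} needs anyway.
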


\begin{proof}
Lemma \cite{Sy:mmg}*{Lemma 7.41} tells us
that $\gamma^n_{\{a_i^2\}}(F_\theta^n) = \gamma^n_{\{1^2\}}(L^{-1}(F_\theta^n))$
tends to $1$ as $n\to\infty$.
It holds that
$e^n_{\{\sqrt{n-1}\, a_i\}}(D_{N,\varepsilon}^n)
= \sigma_{\{\sqrt{n-1}\,a_i\}}^{n-1}(D_{N,\varepsilon}^n)
= \gamma^n_{\{a_i^2\}}(D_{N,\varepsilon}^n)$.
The Maxwell-Boltzmann distribution law leads us that
$\sigma_{\{\sqrt{n-1}\,a_i\}}^{n-1}(D_{N,\varepsilon}^n)$
converges to $1$ as $n\to\infty$.
This completes the proof.
\end{proof}

\begin{lem} \label{lem:1}
Assume {\rm(a1)} and {\rm(a2)}.
If a subsequence of $\{\cP {E^n_{\{\sqrt{n - 1} \, a_i\}}}\}_n$
converges weakly to a pyramid $\cP_\infty$ as $n\to\infty$,
then
\[
\cP_\infty \subset \cP {\Gamma^\infty_{\{a_i^2\}}}.
\]
\end{lem}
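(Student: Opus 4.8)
Abbreviate $\Gamma^m:=\Gamma^m_{\{a_i^2\}}$, $\gamma^m:=\gamma^m_{\{a_i^2\}}$, $E^n:=E^n_{\{\sqrt{n-1}\,a_i\}}$, $e^n:=e^n_{\{\sqrt{n-1}\,a_i\}}$, and let $\varphi_n$ denote $\varphi^{\mathcal E}$ when $E^n=\mathcal E^n$ and $\varphi^{\mathcal S}$ when $E^n=\mathcal S^{n-1}$; recall $(\varphi_n)_*\gamma^n=e^n$ and $\varphi_n(x)=(R(r)/r)\,x$ resp.\ $(\sqrt{n-1}/r)\,x$ with $r=\|(L^n_{\{a_i\}})^{-1}(x)\|$. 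For $0<\theta<1$ and $0<\varepsilon\le 1/N$ put $\Omega_n:=D^n_{N,\varepsilon}\cap F^n_\theta$.

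\emph{Step 1 (scaling reduction).} In Step 2 I will produce a constant $\lambda=\lambda(\theta,\varepsilon)\ge 1$ with $\lambda\to 1$ as $\theta\to 1-$, $\varepsilon\to 0+$. Since $t\mapsto t\,\mathcal Q$ is $\rho$-continuous for every pyramid $\mathcal Q$, since $\mathcal P\,t\Gamma^m=t\,\mathcal P\Gamma^m$, and since $\mathcal P\Gamma^m$ converges weakly to $\mathcal P\Gamma^\infty_{\{a_i^2\}}$, the pyramid $\lambda\,\mathcal P\Gamma^\infty_{\{a_i^2\}}$ is $\rho$-continuous in $\lambda$ and equals the weak limit of $\mathcal P(\lambda\Gamma^m)$ as $m\to\infty$. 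Because every pyramid is box closed and $(1/\lambda)Y\to Y$ in the box distance as $\lambda\downarrow 1$ for each mm-space $Y$, it suffices to prove $\mathcal P_\infty\subset\lambda\,\mathcal P\Gamma^\infty_{\{a_i^2\}}$ for every admissible pair $(\theta,\varepsilon)$ and then let $\theta\to 1-$, $\varepsilon\to 0+$.

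\emph{Step 2 (the transport map is Lipschitz on $\Omega_n$).} By Lemma \ref{lem:Lip}, $\|d(\varphi_n)_x\|\le\sqrt{1+CN\varepsilon}/\theta$ for all $x\in\Omega_n$, and by Lemma \ref{lem:DF}, $\gamma^n(\R^n\setminus\Omega_n)=:\delta_n\to 0$. I claim there is $\lambda=\lambda(\theta,\varepsilon)\to 1$ with $\|\varphi_n(x)-\varphi_n(y)\|\le\lambda\|x-y\|$ for \emph{all} $x,y\in\Omega_n$. The domain $\Omega_n$ is not convex — indeed for $\theta$ near $1$ its intrinsic metric is strictly larger than the ambient one even for typical pairs — so this cannot be obtained by integrating the differential along curves; it must be read off from the radial/linear form of $\varphi_n$. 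For $L=\id$, writing $\varphi_n(x)=R_n(\|x\|)\,x/\|x\|$ ($R_n=R$, resp.\ $R_n\equiv\sqrt{n-1}$), the quantity $\lambda^2\|x-y\|^2-\|\varphi_n(x)-\varphi_n(y)\|^2$ is affine in $c:=\langle x,y\rangle/(\|x\|\|y\|)$ with nonpositive slope, hence minimized over $c\in[-1,1]$ at $c=1$, where it equals $\lambda^2(\|x\|-\|y\|)^2-(R_n(\|x\|)-R_n(\|y\|))^2\ge 0$ because on $[\theta\sqrt n,\infty)$ one has $|R_n'|\le 1/\theta$ and $0\le R_n\le(1/\theta)\,|\cdot|$, and $x,y\in F^n_\theta$ forces $\|x\|,\|y\|\ge\theta\sqrt n$. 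In the general case the same computation is carried out with $\varphi_n(x)=(R(r)/r)x$: one writes $\|\varphi_n(x)-\varphi_n(y)\|^2$ and $\|x-y\|^2$ in terms of $P:=\|x\|/a$, $Q:=\|y\|/a$, $P':=\mu_xR(r_x)$, $Q':=\mu_yR(r_y)$ with $\mu_x:=\|x\|/(a r_x)$, and uses the $D^n_{N,\varepsilon}$-estimates of the proof of Lemma \ref{lem:Lip} (giving $\mu_x=1+O(N\varepsilon^2)$, $R(r_x)/r_x\le 1/\theta$, and $\mathrm{Lip}(\mu)\lesssim\sqrt N\varepsilon$ on the relevant cone); the case $\mu_x\neq\mu_y$ is handled by noting that then $1-c\gtrsim(\mu_x-\mu_y)^2/(N\varepsilon^2)$, which makes the slack term dominate. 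This produces a multiplicative $(1+O(N\varepsilon))$ absorbed into $\lambda$.

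\emph{Step 3 (Lipschitz extension and vanishing transport error).} By the Kirszbraun extension theorem, $\varphi_n|_{\Omega_n}:\Omega_n\to\R^n$ extends to a $\lambda$-Lipschitz map $\R^n\to\R^n$; composing with the $1$-Lipschitz nearest-point projection onto the convex body $\mathcal E^n$ (and, when $E^n=\mathcal S^{n-1}$, with a further truncation onto a shell on which the radial retraction onto $\mathcal S^{n-1}_{\{\sqrt{n-1}\,a_i\}}$ has Lipschitz constant tending to $1$) I obtain a Borel map $\Phi_n:\R^n\to E^n$ with $\Phi_n|_{\Omega_n}=\varphi_n|_{\Omega_n}$ and Lipschitz constant $\lambda$ (after a harmless further enlargement of $\lambda$). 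Thus $\Phi_n:\lambda\Gamma^n\to E^n$ is $1$-Lipschitz, and since $\Phi_n=\varphi_n$ off a set of $\gamma^n$-measure $\delta_n$,
\[
\dP\big((\Phi_n)_*\gamma^n,\,e^n\big)\le\dTV\big((\Phi_n)_*\gamma^n,\,(\varphi_n)_*\gamma^n\big)\le 2\delta_n\to 0 .
\]
Let $E^n_\sharp$ be the mm-space $(E^n,(\Phi_n)_*\gamma^n)$; then $E^n_\sharp\prec\lambda\Gamma^n$, and by Proposition \ref{prop:rho-dTV}, $\rho(\mathcal P E^n_\sharp,\mathcal P E^n)\le 2\dP((\Phi_n)_*\gamma^n,e^n)\to 0$.

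\emph{Step 4 (limit).} Along the subsequence for which $\mathcal P E^n\to\mathcal P_\infty$ weakly we also have $\mathcal P E^n_\sharp\to\mathcal P_\infty$ and $\mathcal P(\lambda\Gamma^n)\to\lambda\,\mathcal P\Gamma^\infty_{\{a_i^2\}}$ weakly, together with $\mathcal P E^n_\sharp\subset\mathcal P(\lambda\Gamma^n)$. Since the inclusion relation of pyramids is closed under weak convergence, $\mathcal P_\infty\subset\lambda\,\mathcal P\Gamma^\infty_{\{a_i^2\}}$, and Step 1 concludes. The main obstacle is Step 2: because the set on which the differential of the transport map is controlled is not convex and its intrinsic diameter does not collapse to the ambient one, the Lipschitz constant near $1$ has to be extracted from the explicit form of $\varphi_n$ via the monotonicity/affine-in-$\cos\angle$ argument above, combined with the conical estimates of Lemma \ref{lem:Lip}.
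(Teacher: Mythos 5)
Your overall strategy is the paper's: transport maps $\varphi^{\mathcal{E}},\varphi^{\mathcal{S}}$, the Lipschitz estimate of Lemma \ref{lem:Lip} on $D^n_{N,\varepsilon}\cap F^n_\theta$, the measure estimates of Lemma \ref{lem:DF}, comparison of pyramids via Proposition \ref{prop:rho-dTV}, and the limit $\varepsilon\to 0+$, $\theta\to 1-$. Where you differ is the mechanism for converting ``Lipschitz on a set of almost full measure'' into a pyramid inclusion. The paper never extends the map: it restricts and normalizes the measures, setting $X_n=(\R^n,\|\cdot\|,\widetilde{e^n|_{D^n_{N,\varepsilon}}})$ and $Y_n=(\R^n,\|\cdot\|,\widetilde{\gamma^n|_{D^n_{N,\varepsilon}\cap F^n_\theta}})$, uses $\varphi_*\widetilde{\gamma^n|_{D\cap F}}=\widetilde{e^n|_{D}}$ to get $\theta^2\cP X_n\subset\cP Y_n$, and controls $\rho(\theta^2\cP X_n,\cP E^n)$ and $\rho(\cP Y_n,\cP\Gamma^n)$ by $\dTV$. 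This avoids Kirszbraun altogether and, more importantly, the projection step of your Step 3. That step is the one place where your argument as written would break in the sphere case: the Kirszbraun extension can send exceptional points deep inside the solid ellipsoid, the ``shell'' is not convex (so nearest-point projection onto it is not $1$-Lipschitz), and the central projection onto $\mathcal{S}^{n-1}_{\{\sqrt{n-1}\,a_i\}}$ has Lipschitz constant governed by the ratio of the semi-axes off the cone $D^n_{N,\varepsilon}$, not tending to $1$. Fortunately the step is unnecessary: Proposition \ref{prop:rho-dTV} compares two measures on the same space $\R^n$, so you may keep $(\R^n,\|\cdot\|,(\Phi_n)_*\gamma^n)$ without forcing the image into $E^n$; then $\dTV((\Phi_n)_*\gamma^n,e^n)\le\gamma^n(\R^n\setminus\Omega_n)\to 0$ and Steps 3--4 go through.

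Concerning Step 2, your refusal to pass from the differential bound to a chordal Lipschitz bound by path integration addresses a point the paper treats silently (it asserts $\theta^{-2}$-Lipschitz continuity on $D^n_{N,\varepsilon}\cap F^n_\theta$ directly from Lemma \ref{lem:Lip}); since the set is indeed non-convex, making this explicit is a legitimate concern. Your affine-in-$\cos\angle$ argument settles the round case cleanly, but the case the lemma actually needs (general $\{a_i\}$ under (a1),(a2)) is only sketched: the assertions $\mathrm{Lip}(\mu)\lesssim\sqrt N\varepsilon$, $1-c\gtrsim(\mu_x-\mu_y)^2/(N\varepsilon^2)$, and ``the slack term dominates'' are plausible but not carried out, and they are exactly the content of the step. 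Until that computation is written with the same error bookkeeping as in Lemma \ref{lem:Lip}, Step 2 is a plan rather than a proof; everything else is either correct or, as above, repaired by deleting the projection detour.
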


\begin{proof}
Take any real number $\varepsilon$ with $0 < \varepsilon < 1/N$
and fix it.  Let $\theta := 1/\sqrt{1+CN\varepsilon}$,
where $C$ is the constant in Lemma \ref{lem:Lip}.
Note that $\theta$ satisfies $0 < \theta < 1$
and tends to $1$ as $\varepsilon \to 0+$.
We apply Lemma \ref{lem:Lip}.
Let
\[
\varphi :=
\begin{cases}
\varphi^{\mathcal{E}} &\text{if $(E^n_{\{\sqrt{n-1}\,a_i\}},e^n_{\{\sqrt{n-1}\,a_i\}}) = (\mathcal{E}^n_{\{\sqrt{n-1}\,a_i\}},\epsilon^n_{\{\sqrt{n-1}\,a_i\}})$,}\\
\varphi^{\mathcal{S}} &\text{if $(E^n_{\{\sqrt{n-1}\,a_i\}},e^n_{\{\sqrt{n-1}\,a_i\}}) = (\mathcal{S}^{n-1}_{\{\sqrt{n-1}\,a_i\}},\sigma^{n-1}_{\{\sqrt{n-1}\,a_i\}})$}.
\end{cases}
\]
Since $\varphi$ is $\theta^{-2}$-Lipschitz continuous on $D^n_{N,\varepsilon} \cap F^n_\theta$
and since
$\varphi_*(\widetilde{\gamma^n_{\{a_i^2\}}|_{D^n_{N,\varepsilon} \cap F^n_\theta}})
= \widetilde{e^n_{\{\sqrt{n-1}\,a_i\}}|_{D^n_{N,\varepsilon}}}$,
the $\theta^2$-scale change $\theta^2 X_n$ of the mm-space
$X_n := (\R^n,\|\cdot\|,\widetilde{e^n_{\{\sqrt{n-1}\,a_i\}}|_{D^n_{N,\varepsilon}}})$
is dominated by $Y_n := (\R^n,\|\cdot\|,\widetilde{\gamma^n_{\{a_i^2\}}|_{D^n_{N,\varepsilon} \cap F^n_\theta}})$ and so
$\theta^2 \cP {X_n} = \cP {\theta^2 X_n} \subset \cP {Y_n}$ for any $n$.
Combining Lemma \ref{lem:DF} with Proposition \ref{prop:rho-dTV},
we see that, as $n\to\infty$,
\begin{align*}
\rho(\theta^2\cP {X_n},\cP {E^n_{\{\sqrt{n-1}\,a_i\}}})
&\le 2\dTV(\widetilde{e^n_{\{\sqrt{n-1}\,a_i\}}|_{D^n_{N,\varepsilon}}},e^n_{\{\sqrt{n-1}\,a_i\}}) \to 0,\\
\rho(\cP {Y_n},\cP {\Gamma^n_{\{a_i^2\}}})
&\le 2\dTV(\widetilde{\gamma^n_{\{a_i^2\}}|_{D^n_{N,\varepsilon} \cap F^n_\theta}},
\gamma^n_{\{a_i^2\}}) \to 0.
\end{align*}
Therefore, $\theta^2\cP_\infty$ is contained in $\cP {\Gamma^\infty_{\{a_i^2\}}}$. As $\varepsilon \to 0+$, we have $\theta \to 1$ and
$\theta^2\cP_\infty \to \cP_\infty$.
This completes the proof.
\end{proof}

\begin{lem} \label{lem:2}
  If we assume {\rm(a2)},
  then $\cP {E^n_{\{\sqrt{n - 1} \, a_i\}}}$ converges weakly to
  $\cP {\Gamma^\infty_{\{a_i^2\}}}$
  as $n\to\infty$.
\end{lem}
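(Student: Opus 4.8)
The plan is to combine the two inclusions

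I will establish the weak convergence of $\cP {E^n_{\{\sqrt{n-1}\,a_i\}}}$ to $\cP {\Gamma^\infty_{\{a_i^2\}}}$ by proving both $\cP_\infty \subset \cP {\Gamma^\infty_{\{a_i^2\}}}$ and $\cP {\Gamma^\infty_{\{a_i^2\}}} \subset \cP_\infty$ for every subsequential weak limit $\cP_\infty$ of $\{\cP {E^n_{\{\sqrt{n-1}\,a_i\}}}\}_n$, and then invoke the $\rho$-compactness of $\Pi$. The first inclusion is Lemma \ref{lem:1}, but that lemma also requires hypothesis (a1), which is not assumed here. So the first step is to reduce to the situation where (a1) holds as well: given $\{a_i\}$ satisfying only (a2), replace it by $\{\max\{a_i,a\}\}_i$, which satisfies both (a1) and (a2); by Lemma \ref{lem:domE} the original $E^n_{\{\sqrt{n-1}\,a_i\}}$ is dominated by $E^n_{\{\sqrt{n-1}\,\max\{a_i,a\}\}}$, and as $a\to 0+$ the two families of ellipsoids (and the two Gaussian spaces $\Gamma^\infty_{\{a_i^2\}}$ and $\Gamma^\infty_{\{\max\{a_i,a\}^2\}}$) get $\square$-close uniformly in $n$, so Lemma \ref{lem:1} applied to the modified sequence yields $\cP_\infty \subset \cP {\Gamma^\infty_{\{a_i^2\}}}$ after letting $a\to 0+$.

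For the reverse inclusion $\cP {\Gamma^\infty_{\{a_i^2\}}} \subset \cP_\infty$, the strategy is to produce measure-preserving $1$-Lipschitz maps from the ellipsoid side to the Gaussian side, i.e.\ to show $\Gamma^n_{\{a_i^2\}} \prec E^n_{\{\sqrt{n-1}\,a_i\}}$ up to small error. The natural candidate is the inverse transport map: $(\varphi^{\mathcal{E}})^{-1}$ (respectively $(\varphi^{\mathcal{S}})^{-1}$) pushes $e^n_{\{\sqrt{n-1}\,a_i\}}$ forward to $\gamma^n_{\{a_i^2\}}$. On the domain $D^n_{N,\varepsilon}$, the computation in Lemma \ref{lem:Lip} shows $\|d\varphi_x\|$ is close to (at most) $\theta^{-1}$; since on the image of $D^n_{N,\varepsilon}\cap F^n_\theta$ one also controls $\|d(\varphi^{-1})\|$ from below-type bounds on the singular values (the Lemma \ref{lem:Lip} computation in fact gives $\|d\varphi^{\mathcal{E}}_x(v)\|^2 = t^2(\partial R/\partial r)^2 + (1-t^2)R^2/r^2 + O(\theta^{-2}N\varepsilon)$, and the same analysis bounds the smallest singular value away from $0$ on $F^n_\theta$), the inverse is Lipschitz with constant $\sqrt{1+CN\varepsilon}$ up to the scale factor. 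Thus $\theta^2\, \widetilde{\gamma^n_{\{a_i^2\}}|_{D^n_{N,\varepsilon}\cap F^n_\theta}}$-restricted Gaussian space is dominated by the restricted ellipsoid, and Lemma \ref{lem:DF} together with Proposition \ref{prop:rho-dTV} lets me discard the restrictions at a $\rho$-cost tending to $0$. Letting $\varepsilon\to 0+$ (so $\theta\to 1$) along the given weakly convergent subsequence gives $\cP {\Gamma^\infty_{\{a_i^2\}}} \subset \cP_\infty$.

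Combining the two inclusions gives $\cP_\infty = \cP {\Gamma^\infty_{\{a_i^2\}}}$ for every subsequential limit, and since $(\Pi,\rho)$ is compact, the whole sequence $\cP {E^n_{\{\sqrt{n-1}\,a_i\}}}$ converges weakly to $\cP {\Gamma^\infty_{\{a_i^2\}}}$. \textbf{The main obstacle} I anticipate is the reverse inclusion, specifically controlling the Lipschitz constant of the \emph{inverse} transport map: one must verify that the smallest singular value of $d\varphi_x$ stays bounded below (uniformly in $n$) on $D^n_{N,\varepsilon}\cap F^n_\theta$, which is exactly why the domain $F^n_\theta$ was introduced — near the origin $R/r$ and $\partial R/\partial r$ can be small, so one genuinely needs the cutoff $\|L^{-1}(x)\|\ge\theta\sqrt n$ to keep $\varphi$ bi-Lipschitz. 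A secondary nuisance is bookkeeping the normalizations (the conditional measures on $D^n_{N,\varepsilon}$ versus $D^n_{N,\varepsilon}\cap F^n_\theta$ must match under $\varphi$, which requires noting, via the Maxwell–Boltzmann law as in Lemma \ref{lem:DF}, that both domains carry almost full measure so the normalizing constants both tend to $1$). Modulo these points, the argument is a symmetric mirror of Lemma \ref{lem:1}.
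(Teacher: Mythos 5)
There are two genuine gaps, one in each inclusion. For the inclusion $\cP_\infty\subset\cP\Gamma^\infty_{\{a_i^2\}}$, your reduction via $\hat a_i:=\max\{a_i,a\}$ starts exactly as the paper does (dominate $E^n_{\{\sqrt{n-1}\,a_i\}}$ by $E^n_{\{\sqrt{n-1}\,\hat a_i\}}$ and apply Lemma \ref{lem:1}), but your way back from $\cP\Gamma^\infty_{\{\hat a_i^2\}}$ to $\cP\Gamma^\infty_{\{a_i^2\}}$ is not available: the number $a$ in (a2) is the fixed common tail value of the sequence $\{a_i\}$, not a free parameter, so ``letting $a\to 0+$'' has no meaning here; and if you instead introduce a separate threshold $\delta<a$ and use $\max\{a_i,\delta\}$, hypothesis (a1) (which requires all entries $\ge a$, the tail value) again fails. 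Moreover the claimed uniform-in-$n$ box closeness of $E^n_{\{\sqrt{n-1}\,a_i\}}$ and $E^n_{\{\sqrt{n-1}\,\hat a_i\}}$ is neither proved nor needed. What is actually needed, and what the paper proves, is the inclusion $\cP\Gamma^\infty_{\{\hat a_i^2\}}\subset\cP\Gamma^\infty_{\{a_i^2\}}$ itself: letting $l$ be the number of indices with $a_i<a$, the projection of $\Gamma^{k+l}_{\{a_i^2\}}$ dropping those $l$ axes is $1$-Lipschitz and measure-preserving onto $\Gamma^{k}_{\{\hat a_i^2\}}$ (the infinite tail of variance $a^2$ supplies substitutes for the dropped coordinates), so $\Gamma^{k+l}_{\{a_i^2\}}$ dominates $\Gamma^{k}_{\{\hat a_i^2\}}$ for every $k$. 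This step is the missing idea in your first inclusion.

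The reverse inclusion is where your plan breaks down more seriously. You want an approximate domination $\Gamma^n_{\{a_i^2\}}\prec E^n_{\{\sqrt{n-1}\,a_i\}}$ via the inverse transport map, asserting that the computation of Lemma \ref{lem:Lip} also bounds the smallest singular value of $d\varphi_x$ away from $0$ on $D^n_{N,\varepsilon}\cap F^n_\theta$. It does not: that lemma only yields the upper bounds $0\le\partial R/\partial r\le 1/\theta$ and $R/r<1/\theta$. In fact at the typical Gaussian radius $r\approx\sqrt{n}$ (which lies in $F^n_\theta$ and carries essentially all the mass) one has $\partial R/\partial r\sim n^{-1/2}$, because the Gaussian bulk, an annulus of width $O(1)$, is transported onto the boundary shell of the ball, which has width $O(n^{-1/2})$; hence $\varphi^{-1}$ has radial Lipschitz constant of order $\sqrt{n}$ and no uniform bi-Lipschitz control is possible. (For the sphere case $\varphi^{\mathcal S}$ is a projection and not even injective, so ``$(\varphi^{\mathcal S})^{-1}$'' does not exist.) The statement you are trying to prove is also stronger than what the lemma requires: weak convergence of pyramids only asks that each \emph{fixed} finite-dimensional space in $\cP\Gamma^\infty_{\{a_i^2\}}$ be asymptotically dominated, not the full $n$-dimensional Gaussian. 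The paper's argument is accordingly much simpler: by the Maxwell--Boltzmann law (Proposition \ref{prop:MB-law}), the marginal $\nu^k_n:=(\pi^n_k)_*e^n_{\{\sqrt{n-1}\,a_i\}}$ converges weakly to $\gamma^k_{\{a_i^2\}}$ for each fixed $k$, so $(\R^k,\|\cdot\|,\nu^k_n)$ box-converges to $\Gamma^k_{\{a_i^2\}}$; since $E^n_{\{\sqrt{n-1}\,a_i\}}$ dominates this marginal via $\pi^n_k$, every $\Gamma^k_{\{a_i^2\}}$ belongs to $\cP_\infty$, which gives $\cP_\infty\supset\cP\Gamma^\infty_{\{a_i^2\}}$ with no inverse map at all. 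Your outer skeleton (identify all subsequential limits and use compactness of $\Pi$) is fine, but both inclusions need to be repaired as above.
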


\begin{proof}
Assume (a2) and
suppose that $\cP {E^n_{\{\sqrt{n - 1} \, a_i\}}}$ does not converge weakly to
$\cP {\Gamma^\infty_{\{a_i^2\}}}$ as $n\to\infty$.
Then, there is a subsequence $\{n(j)\}$ of $\{n\}$ such that
$\cP E^{n(j)}_{\{\sqrt{n(j) - 1} \, a_i\}}$ converges weakly to a pyramid $\cP_\infty$ different from
$\cP {\Gamma^\infty_{\{a_i^2\}}}$.

The Maxwell-Boltzmann distribution law tells us that
the push-forward measure
$\nu^k_{n(j)} := (\pi^{n(j)}_k)_* e^{n(j)}_{\{\sqrt{n(j) - 1}\,a_i\}}$
converges weakly to $\gamma^k_{\{a_i\}}$
as $j\to\infty$ for any $k$, so that
$(\R^k,\|\cdot\|,\nu^k_{n(j)})$
box converges to $\Gamma^k_{\{a_i^2\}}$.
Since $E^{n(j)}_{\{\sqrt{n(j) - 1} \, a_i\}}$ dominates $(\R^k,\|\cdot\|,\nu^k_{n(j)})$,
the limit pyramid $\cP_\infty$ contains $\Gamma^k_{\{a_i^2\}}$ for any $k$.
This proves
\begin{equation} \label{eq:lem2sub}
  \cP_\infty \supset \cP {\Gamma^\infty_{\{a_i^2\}}}.
\end{equation}

Let $\hat{a}_i := \max\{a_i,a\}$.
It follows from $a_i \le \hat{a}_i$ that
$E^n_{\{\sqrt{n - 1} \, a_i\}}$ is dominated by
$E^n_{\{\sqrt{n - 1} \, \hat{a}_i\}}$,
which implies
$\cP {E^n_{\{\sqrt{n - 1} \, a_i\}}} \subset \cP {E^n_{\{\sqrt{n - 1} \, \hat{a}_i\}}}$
for any $n$.
By applying Lemma \ref{lem:1},
the limit of any weakly convergent sequence of $\{\cP {E^n_{\{\sqrt{n - 1} \, \hat{a}_i\}}}\}_n$
is contained in $\cP {\Gamma^\infty_{\{\hat{a}_i^2\}}}$.
Therefore, $\cP_\infty$ is contained in $\cP {\Gamma^\infty_{\{\hat{a}_i^2\}}}$.
Denote by $l$ the number of $i$'s with $a_i < a$.
For any $k \ge N$, we consider the projection from $\Gamma^{k+l}_{\{a_i^2\}}$ to
$\Gamma^k_{\{\hat{a}_i^2\}}$ dropping the axes $x_i$ with $a_i < a$,
which is $1$-Lipschitz continuous and preserves their measures.
This shows that $\Gamma^{k+l}_{\{a_i^2\}}$ dominates
$\Gamma^k_{\{\hat{a}_i^2\}}$,
and so
$\cP {\Gamma^\infty_{\{a_i^2\}}} \supset \cP {\Gamma^\infty_{\{\hat{a}_i^2\}}}$.
We thus obtain
\begin{equation} \label{eq:lem2sup}
  \cP_\infty \subset \cP {\Gamma^\infty_{\{a_i^2\}}}.
\end{equation}

Combining \eqref{eq:lem2sub} and \eqref{eq:lem2sup} yields
$\cP_\infty = \cP {\Gamma^\infty_{\{a_i^2\}}}$,
which is a contradiction.
This completes the proof.
\end{proof}

\begin{lem} \label{lem:supset}
Let $\{a_{ij}\}$ satisfy {\rm(A0)--(A3)}.
If $\cP {E^{n(j)}_{\{\sqrt{n(j) - 1} \, a_{ij}\}_i}}$ converges weakly to a pyramid $\cP_\infty$ as $j\to\infty$,
then
\[
\cP_\infty \supset \cP {\Gamma^\infty_{\{a_i^2\}}}.
\]
\end{lem}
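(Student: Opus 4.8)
The plan is to realize $\cP\,\Gamma^\infty_{\{a_i^2\}}$ as contained in $\cP_\infty$ by showing that every finite-dimensional Gaussian space $\Gamma^k_{\{a_i^2\}}$ lies in $\cP_\infty$, and then invoking the definition of $\cP\,\Gamma^\infty_{\{a_i^2\}}$ as the $\square$-closure of $\bigcup_k \cP\,\Gamma^k_{\{a_i^2\}}$ together with the box-closedness of pyramids. Since a pyramid is closed downward under the Lipschitz order, it suffices to exhibit, for each fixed $k$, an mm-space that is dominated by $E^{n(j)}_{\{\sqrt{n(j)-1}\,a_{ij}\}_i}$ for large $j$ and whose associated pyramid converges weakly to $\cP\,\Gamma^k_{\{a_i^2\}}$.

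First I would fix $k$ and consider the pushforward $\nu^k_j := (\pi^{n(j)}_k)_* e^{n(j)}_{\{\sqrt{n(j)-1}\,a_{ij}\}_i}$ on $\R^k$. The projection $\pi^{n(j)}_k \colon E^{n(j)}_{\{\sqrt{n(j)-1}\,a_{ij}\}_i} \to (\R^k,\|\cdot\|,\nu^k_j)$ is $1$-Lipschitz and measure-preserving, so $(\R^k,\|\cdot\|,\nu^k_j) \prec E^{n(j)}_{\{\sqrt{n(j)-1}\,a_{ij}\}_i}$ for every $j$; hence $\cP\,(\R^k,\|\cdot\|,\nu^k_j) \subset \cP\,E^{n(j)}_{\{\sqrt{n(j)-1}\,a_{ij}\}_i}$. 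By the Maxwell-Boltzmann distribution law (Proposition \ref{prop:MB-law}), which applies since $\{a_{ij}\}$ satisfies (A0) and (A3), $\nu^k_j$ converges weakly to $\gamma^k_{\{a_i\}}$ as $j\to\infty$. Because all these measures live on the fixed separable metric space $(\R^k,\|\cdot\|)$, Proposition \ref{prop:rho-dTV} (more precisely the inequality $\rho \le \dconc \le \square \le 2\dP$) shows $\square\big((\R^k,\|\cdot\|,\nu^k_j),\Gamma^k_{\{a_i^2\}}\big)\to 0$, i.e. box convergence, hence weak convergence of the associated pyramids. Since $\cP_\infty$ is the weak limit of $\cP\,E^{n(j)}_{\{\sqrt{n(j)-1}\,a_{ij}\}_i}$ and pyramids are box-closed (condition (iii) in Definition \ref{defn:pyramid}), the inclusions $\cP\,(\R^k,\|\cdot\|,\nu^k_j)\subset\cP\,E^{n(j)}_{\{\sqrt{n(j)-1}\,a_{ij}\}_i}$ pass to the limit to give $\cP\,\Gamma^k_{\{a_i^2\}}\subset\cP_\infty$.

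Finally I would let $k\to\infty$: taking the union over all $k$ and then the $\square$-closure, and using again that $\cP_\infty$ is box-closed, yields
\[
\cP\,\Gamma^\infty_{\{a_i^2\}} = \overline{\bigcup_{k=1}^\infty \cP\,\Gamma^k_{\{a_i^2\}}}^{\;\square} \subset \cP_\infty,
\]
which is the claim. I do not expect a serious obstacle here: the argument is essentially the one already used to establish \eqref{eq:lem2sub} inside the proof of Lemma \ref{lem:2}, now applied with the full double sequence $\{a_{ij}\}$ rather than a constant-in-$j$ sequence; the only point to be slightly careful about is that the weak limit of the pyramids is taken along the whole sequence $j$, so one should note that a weakly convergent sequence of pyramids whose members contain a fixed pyramid $\cP'$ has limit containing $\cP'$ — which follows from box-closedness of the limit together with the fact that $\rho$-convergence is compatible with the inclusion order via $\iota$ and the compactness of $\Pi$.
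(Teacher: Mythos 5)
Your proposal is correct and is essentially the paper's own argument: project by $\pi^{n(j)}_k$, use the Maxwell--Boltzmann law to get box convergence of the dominated spaces $(\R^k,\|\cdot\|,\nu^k_j)$ to $\Gamma^k_{\{a_i^2\}}$, pass the inclusions to the weak limit to conclude $\Gamma^k_{\{a_i^2\}}\in\cP_\infty$ for every $k$, and finish by box-closedness of $\cP_\infty$. The only cosmetic difference is that the paper treats the indices $k$ beyond $i_0:=\sup\{i\mid a_i>0\}$ by noting $\Gamma^k_{\{a_i^2\}}$ is then mm-isomorphic to $\Gamma^{i_0}_{\{a_i^2\}}$, a degenerate-variance point your argument absorbs automatically.
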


\begin{proof}
Note that the sequence $\{a_i\}$ is monotone nonincreasing.
Put $i_0 := \sup\{\;i \mid a_i > 0\;\}$ ($\le \infty$).
We see $a_{i_0} > 0$ if $i_0 < \infty$.
The Maxwell-Boltzmann distribution law proves that
$\nu^k_{n(j)} := (\pi^{n(j)}_k)_* e^{n(j)}_{\{\sqrt{n(j) - 1}\,a_{ij}\}_i}$ converges weakly to $\gamma^k_{\{a_i^2\}}$
as $j\to\infty$ for each finite $k$ with $1 \le k \le i_0$.
The ellipsoid $E^{n(j)}_{\{\sqrt{n(j) - 1} \, a_{ij}\}_i}$ dominates
$(\R^k,\|\cdot\|,\nu^k_{n(j)})$,
which converges to $\Gamma^k_{\{a_i^2\}}$,
so that $\Gamma^k_{\{a_i^2\}}$ belongs to $\cP_\infty$.
Since
$\Gamma^k_{\{a_i^2\}}$ for any $k \ge i_0$ is mm-isomorphic to $\Gamma^{i_0}_{\{a_i^2\}}$
provided $i_0 < \infty$, we obtain the lemma.
\end{proof}

\begin{lem} \label{lem:subset}
Let $\{a_{ij}\}$ satisfy {\rm(A0)--(A3)}.
If $\cP {E^{n(j) - 1}_{\{\sqrt{n(j) - 1} \, a_{ij}\}_i}}$ converges weakly to a pyramid $\cP_\infty$ as $j\to\infty$,
then
\[
\cP_\infty \subset \cP {\Gamma^\infty_{\{a_i^2\}}}.
\]
\end{lem}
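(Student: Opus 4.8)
The plan is to reduce the general case to the special situation already settled in Lemma~\ref{lem:2}, following the outline in the introduction: for $j$ large, the (solid) ellipsoid $E^{n(j)}_{\{\sqrt{n(j)-1}\,a_{ij}\}_i}$ should be squeezed by an ellipsoid whose defining sequence is independent of $j$ and eventually constant. Recall from (A2)--(A3) that $\{a_i\}$ is monotone nonincreasing; put $a_\infty:=\lim_{i\to\infty}a_i\ge 0$. For each $m\in\N$ I would introduce the $j$-independent majorant
\[
b^{(m)}_i:=
\begin{cases}
a_i+1/m & \text{if }i<m,\\
a_m+1/m & \text{if }i\ge m.
\end{cases}
\]
This sequence consists of positive reals, is bounded above by $a_1+1$, and is constant for $i\ge m$, so it satisfies {\rm(a2)}. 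Applying (A3) to the finitely many indices $i\le m$ furnishes a $j_m$ with $a_{ij}\le a_i+1/m$ for all $i<m$ and $a_{mj}\le a_m+1/m$ whenever $j\ge j_m$; then (A2) also gives $a_{ij}\le a_{mj}\le b^{(m)}_i$ for $i\ge m$, hence $a_{ij}\le b^{(m)}_i$ for every $i$ and every $j\ge j_m$.

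Given this, Lemma~\ref{lem:domE} shows that $E^{n(j)}_{\{\sqrt{n(j)-1}\,a_{ij}\}_i}$ is dominated by $E^{n(j)}_{\{\sqrt{n(j)-1}\,b^{(m)}_i\}_i}$ for all $j\ge j_m$, so that $\cP{E^{n(j)}_{\{\sqrt{n(j)-1}\,a_{ij}\}_i}}\subset\cP{E^{n(j)}_{\{\sqrt{n(j)-1}\,b^{(m)}_i\}_i}}$. Since $\{b^{(m)}_i\}_i$ satisfies {\rm(a2)} and $n(j)\to\infty$ by (A0), Lemma~\ref{lem:2} yields that $\cP{E^{n(j)}_{\{\sqrt{n(j)-1}\,b^{(m)}_i\}_i}}$ converges weakly to $\cP{\Gamma^\infty_{\{(b^{(m)}_i)^2\}}}$ as $j\to\infty$. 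The inclusion relation among pyramids passes to weak limits (the same device used at the end of the proof of Lemma~\ref{lem:1}), and by hypothesis $\cP{E^{n(j)}_{\{\sqrt{n(j)-1}\,a_{ij}\}_i}}\to\cP_\infty$; hence $\cP_\infty\subset\cP{\Gamma^\infty_{\{(b^{(m)}_i)^2\}}}$ for every $m$.

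It then remains to let $m\to\infty$. One checks that $b^{(m)}_i\downarrow a_i$ uniformly in $i$: indeed $b^{(m)}_i-a_i=1/m$ for $i<m$, while $0\le b^{(m)}_i-a_i=(a_m-a_i)+1/m\le(a_m-a_\infty)+1/m$ for $i\ge m$, so $\sup_i(b^{(m)}_i-a_i)\le(a_m-a_\infty)+1/m\to 0$. Granting that $\cP{\Gamma^\infty_{\{c_i^2\}}}$ is $\rho$-continuous in the (uniformly bounded) variance sequence $\{c_i\}$ for sup-norm convergence, we get $\cP{\Gamma^\infty_{\{(b^{(m)}_i)^2\}}}\to\cP{\Gamma^\infty_{\{a_i^2\}}}$ weakly, and, using once more that pyramid inclusion is stable under weak limits together with the inclusions of the previous paragraph, we conclude $\cP_\infty\subset\cP{\Gamma^\infty_{\{a_i^2\}}}$, as required.

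The main obstacle is precisely this continuity of the virtual Gaussian space in its variance parameter. When $a_\infty>0$ it is easy: since $b^{(m)}_i\ge a_i\ge a_\infty>0$, coordinatewise rescaling gives $\Gamma^n_{\{(b^{(m)}_i)^2\}}\prec(1+\eta_m)\,\Gamma^n_{\{a_i^2\}}$ with $\eta_m:=\|b^{(m)}-a\|_\infty/a_\infty\to 0$ uniformly in $n$, whence $\cP{\Gamma^\infty_{\{(b^{(m)}_i)^2\}}}\subset(1+\eta_m)\cP{\Gamma^\infty_{\{a_i^2\}}}$, and one finishes with the continuity of $t\mapsto t\cP$ on $\Pi$ (and the reverse inclusion, which holds since $b^{(m)}_i\ge a_i$). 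The genuinely delicate case is $a_\infty=0$ — in particular $a_N=0$ for some $N$, where $\Gamma^\infty_{\{a_i^2\}}=\Gamma^{N-1}_{\{a_i^2\}}$ is finite-dimensional — because the rescaling constant degenerates; there the approximation must be carried out directly at the level of the $\rho$-metric, e.g.\ by showing that truncating the variance tail of $\Gamma^\infty_{\{a_i^2\}}$ to the constant $a_m$ and letting $m\to\infty$ recovers $\cP{\Gamma^\infty_{\{a_i^2\}}}$, which can in turn be deduced from Lemma~\ref{lem:2} applied to suitable ellipsoids together with a Maxwell--Boltzmann/diagonal argument. Everything else is routine bookkeeping with Lemmas~\ref{lem:domE} and~\ref{lem:2}.
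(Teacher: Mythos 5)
Your reduction to Lemma \ref{lem:2} via a $j$-independent majorant sequence is sound and is essentially the paper's strategy: the domination step ($a_{ij}\le b^{(m)}_i$ for $j\ge j_m$ from (A2)--(A3), then Lemma \ref{lem:domE} and Lemma \ref{lem:2}, then passing inclusions to weak limits) matches the paper, and your treatment of the case $a_\infty>0$ by coordinatewise rescaling with factor $1+\eta_m$ and continuity of $t\mapsto t\cP$ is exactly the argument given there (with a multiplicative rather than additive perturbation).

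However, in the case $a_\infty=0$ there is a genuine gap, and it sits exactly where you flag it: you need that $\cP{\Gamma^\infty_{\{(b^{(m)}_i)^2\}}}$ converges weakly to $\cP{\Gamma^\infty_{\{a_i^2\}}}$ when the tail of the variance sequence is a small constant tending to $0$, i.e.\ that an infinite-dimensional Gaussian factor of uniformly small variance contributes nothing in the limit. Your suggested route --- ``deduced from Lemma \ref{lem:2} applied to suitable ellipsoids together with a Maxwell--Boltzmann/diagonal argument'' --- does not supply this: Lemma \ref{lem:2} only identifies the limit pyramid for a \emph{fixed} sequence satisfying (a2), and the Maxwell--Boltzmann law controls only finitely many coordinates at a time, whereas the whole difficulty is the infinite small-variance tail, whose effect is invisible to any finite-dimensional projection. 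The paper closes this gap by a concentration argument: writing $\Gamma^k_{\{b_{\varepsilon,i}^2\}}$ as the $l_2$-product of the head $\Gamma^{I(\varepsilon)-1}_{\{(1+\varepsilon)^2a_i^2\}}$ and the tail $\Gamma^{k-I(\varepsilon)+1}_{\{(2\varepsilon)^2\}}$, it uses the Gaussian isoperimetric inequality to bound $\ObsDiam$ of the tail by a quantity $\tau(\varepsilon)\to 0$ \emph{uniformly in the dimension}, then invokes \cite{Sy:mmg}*{Proposition 7.32} to get $\dconc(\Gamma^k_{\{b_{\varepsilon,i}^2\}},\Gamma^{I(\varepsilon)-1}_{\{(1+\varepsilon)^2a_i^2\}})\le\tau(\varepsilon)$, and finally passes to $k\to\infty$ and a convergent subsequence of the pyramids $\cP{\Gamma^\infty_{\{b_{\varepsilon(l),i}^2\}}}$ to conclude the inclusion in $\cP{\Gamma^\infty_{\{a_i^2\}}}$. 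Without some such dimension-free estimate on the tail (or an equivalent substitute), your final limit $m\to\infty$ is unjustified precisely in the degenerate case, so the proof as written is incomplete there. (The parenthetical claim that $a_\infty=0$ forces $a_N=0$ for some $N$ is also not correct --- e.g.\ $a_i=1/i$ --- though if you meant it only as a subcase it is harmless.)
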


\begin{proof}
Since $\{a_i\}$ is monotone nonincreasing, it converges to a nonnegative real number, say $a_\infty$.

We first assume that $a_\infty > 0$.
We see that $a_i > 0$ for any $i$.
For any $\varepsilon > 0$ there is a number $I(\varepsilon)$
such that 
\begin{equation} \label{eq:ai}
  a_i \le (1+\varepsilon) a_\infty  \qquad\text{for any $i \ge I(\varepsilon)$.}
\end{equation}
Also, there is a number $J(\varepsilon)$ such that
\begin{equation} \label{eq:aij}
 a_{ij} \le a_i + a_\infty\varepsilon \qquad\text{for any $i \le I(\varepsilon)$ and $j \ge J(\varepsilon)$.}
\end{equation}
By the monotonicity of $a_{ij}$ in $i$, \eqref{eq:ai}, and \eqref{eq:aij},
we have
\begin{equation} \label{eq:aijb1}
a_{ij} \le a_{I(\varepsilon),j} \le a_{I(\varepsilon)} + a_\infty\varepsilon \le (1+2\varepsilon)a_\infty
\ \text{for any $i \ge I(\varepsilon)$ and $j \ge J(\varepsilon)$.}
\end{equation}
It follows from \eqref{eq:aij} and $a_\infty \le a_i$ that
\begin{equation} \label{eq:aijb2}
a_{ij} \le a_i + a_\infty\varepsilon \le (1+\varepsilon)a_i
\quad\text{for any $i \le I(\varepsilon)$ and $j \ge J(\varepsilon)$.}
\end{equation}
Let
\[
b_{\varepsilon,i} :=
\begin{cases}
  a_i &\text{if $i \le I(\varepsilon)$,}\\
  a_\infty &\text{if $i > I(\varepsilon)$.}
\end{cases}
\]
By \eqref{eq:aijb1} and \eqref{eq:aijb2},
for any $i$ and $j \ge J(\varepsilon)$, we see that $a_{ij} \le (1+2\varepsilon)b_{\varepsilon,i}$
and so $E^{n(j)}_{\{a_{ij}\}_i} \prec E^{n(j)}_{\{(1+2\varepsilon)b_{\varepsilon,i}\}}
= (1+2\varepsilon)E^{n(j)}_{\{b_{\varepsilon,i}\}}$.
Lemma \ref{lem:2} implies that $\cP {E^{n(j)}_{\{\sqrt{n(j) - 1} \, b_{\varepsilon,i}\}}}$ converges weakly to
$\cP {\Gamma^\infty_{\{b_{\varepsilon,i}^2\}}}$ as $j\to\infty$.
Therefore, $\cP_\infty$ is contained in $(1+2\varepsilon)\cP {\Gamma^\infty_{\{b_{\varepsilon,i}^2\}}}$ for any $\varepsilon > 0$.
Since $b_{\varepsilon,i} \le a_i$, we see that
$\cP_\infty$ is contained in $(1+2\varepsilon)\cP {\Gamma^\infty_{\{a_i^2\}}}$
for any $\varepsilon > 0$.
This proves the lemma in this case.

We next assume $a_\infty = 0$.
For any $\varepsilon > 0$ there is a number $I(\varepsilon)$
such that
\begin{equation} \label{eq:0}
a_i < \varepsilon \qquad\text{for any $i \ge I(\varepsilon)$.}
\end{equation}
We may assume that $I(\varepsilon) = i_0 + 1$ if $i_0 < \infty$,
where $i_0 := \sup\{\;i \mid a_i > 0\;\}$.
Also, there is a number $J(\varepsilon)$ such that
\begin{align}
\label{eq:i}
a_{I(\varepsilon),j} &< a_{I(\varepsilon)} + \varepsilon \qquad\text{for any $j \ge J(\varepsilon)$;}\\
\label{eq:ii}
a_{ij} &< (1+\varepsilon)a_i \qquad\text{for any $i < I(\varepsilon)$ and $j \ge J(\varepsilon)$.}
\end{align}
It follows from \eqref{eq:0} and \eqref{eq:i} that 
\begin{equation} \label{eq:b2}
a_{ij} \le a_{I(\varepsilon),j} < a_{I(\varepsilon)} + \varepsilon < 2\varepsilon
\quad\text{for any $i \ge I(\varepsilon)$ and $j \ge J(\varepsilon)$.}
\end{equation}
Let
\[
b_{\varepsilon,i} :=
\begin{cases}
  (1+\varepsilon)a_i &\text{if $i < I(\varepsilon)$,}\\
  2\varepsilon &\text{if $i \ge I(\varepsilon)$.}
\end{cases}
\]
From \eqref{eq:ii} and \eqref{eq:b2}, we have $a_{ij} < b_{\varepsilon,i}$ for any $i$ and $j \ge J(\varepsilon)$,
and so $E^{n(j)}_{\{a_{ij}\}_i} \prec E^{n(j)}_{\{b_{\varepsilon,i}\}}$ for $j \ge J(\varepsilon)$.
Lemma \ref{lem:2} implies that $\cP {E^{n(j)}_{\{\sqrt{n(j) - 1} \, b_{\varepsilon,i}\}}}$
converges weakly to $\cP {\Gamma^\infty_{\{b_{\varepsilon,i}^2\}}}$ as $j\to\infty$.
Therefore, $\cP_\infty$ is contained in $\cP {\Gamma^\infty_{\{b_{\varepsilon,i}^2\}}}$
for any $\varepsilon > 0$.
Let $k$ be any number with $k \ge I(\varepsilon)$.
The Gaussian space $\Gamma^k_{\{b_{\varepsilon,i}^2\}}$ is mm-isomorphic to
the $l_2$-product of
$\Gamma^{I(\varepsilon)-1}_{\{(1+\varepsilon)^2a_i^2\}}$
and $\Gamma^{k-I(\varepsilon)+1}_{\{(2\varepsilon)^2\}}$.
It follows from the Gaussian isoperimetry that
\[
\ObsDiam(\Gamma^{k-I(\varepsilon)+1}_{\{(2\varepsilon)^2\}})
= \inf_{\kappa > 0} \max\{2\varepsilon\diam(\gamma^1_{1^2};1-\kappa),\kappa\} =: \tau(\varepsilon),
\]
which tends to zero as $\varepsilon \to 0+$.
If $\tau(\varepsilon) < 1/2$, then, by \cite{Sy:mmg}*{Proposition 7.32},
\[
\rho( \cP {\Gamma^k_{\{b_{\varepsilon,i}^2\}}},
\cP {\Gamma^{I(\varepsilon)-1}_{\{(1+\varepsilon)^2a_i^2\}}} )
\le \dconc(\Gamma^k_{\{b_{\varepsilon,i}^2\}},\Gamma^{I(\varepsilon)-1}_{\{(1+\varepsilon)^2a_i^2\}})
\le \tau(\varepsilon).
\]
Taking the limit as $k\to\infty$ yields
\[
\rho( \cP {\Gamma^\infty_{\{b_{\varepsilon,i}^2\}}},
\cP {\Gamma^{I(\varepsilon)-1}_{\{(1+\varepsilon)^2a_i^2\}}} )
\le \tau(\varepsilon).
\]
There is a sequence $\{\varepsilon(l)\}$, $l=1,2,\dots$,
of positive real numbers tending to zero
such that $\cP {\Gamma^\infty_{\{b_{\varepsilon(l),i}^2\}}}$
converges weakly to
a pyramid $\cP_\infty'$ as $l\to\infty$.
$\cP_\infty'$ contains $\cP_\infty$ and $\cP {\Gamma^{I(\varepsilon(l))-1}_{\{(1+\varepsilon(l))^2a_i^2\}}}$ converges weakly to $\cP_\infty'$ as $l\to\infty$.
Since $\cP {\Gamma^{I(\varepsilon(l))-1}_{\{(1+\varepsilon(l))^2a_i^2\}}}$
is contained in $\cP {\Gamma^\infty_{\{(1+\varepsilon(l))^2a_i^2\}}}$
and since $\cP {\Gamma^\infty_{\{(1+\varepsilon(l))^2a_i^2\}}} = (1+\varepsilon(l))\cP {\Gamma^\infty_{\{a_i^2\}}}$ converges weakly to $\cP {\Gamma^\infty_{\{a_i^2\}}}$ as $l\to\infty$,
the pyramid $\cP_\infty'$ is contained in $\cP {\Gamma^\infty_{\{a_i^2\}}}$,
so that $\cP_\infty$ is contained in $\cP {\Gamma^\infty_{\{a_i^2\}}}$.
This completes the proof.
\end{proof}

\begin{proof}[Proof of Theorem {\rm\ref{thm:ellipsoid}}]
Suppose that $\cP {E^{n(j)}_{\{\sqrt{n(j) - 1} \, a_{ij}\}_i}}$ does not converge weakly to
$\cP {\Gamma^\infty_{\{a_i^2\}}}$ as $j \to \infty$.
Then, taking a subsequence of $\{j\}$ we may assume that
$\cP {E^{n(j)}_{\{\sqrt{n(j) - 1} \, a_{ij}\}_i}}$ converges weakly to a pyramid $\cP_\infty$
different from $\cP {\Gamma^\infty_{\{a_i^2\}}}$, which contradicts
Lemmas \ref{lem:supset} and \ref{lem:subset}.
Thus, $\cP {E^{n(j)}_{\{\sqrt{n(j) - 1} \, a_{ij}\}_i}}$ converges weakly to
$\cP {\Gamma^\infty_{\{a_i^2\}}}$ as $j \to \infty$.

As is mentioned in Subsection \ref{ssec:Gaussian},
the infinite-dimensional Gaussian space $\Gamma^\infty_{\{a_i^2\}}$
is well-defined as an mm-space if and only if
$\{a_i\}$ is an $l^2$-sequence, only in which case
the above sequence of (solid) ellipsoids becomes a convergent
sequence in the concentration topology.

Assume that $a_i$ converges to zero as $i\to\infty$.
It is well-known that the Ornstein-Uhlenbeck operator
(or the drifted Laplacian) on $\Gamma^1_{a^2}$ has compact resolvent
and spectrum $\{ka^{-2} \mid k = 0,1,2\dots\}$.
Thus, the same proof as in \cite{Sy:mmg}*{Corollary 7.35}
yields that $\Gamma^n_{\{a_i^2\}}$ asymptotically (spectrally) concentrates
to $\Gamma^\infty_{\{a_i\}}$.

Conversely, we assume that $a_i$ is bounded away from zero
and set $\underline{a} := \inf_i a_i$.
Applying \cite{Sy:mmg}*{Proposition 7.37} yields
that $\cP\Gamma^\infty_{\{\underline{a}^2\}}$ is not concentrated.
Since $\cP\Gamma^\infty_{\{a_i^2\}}$ contains
$\cP\Gamma^\infty_{\{\underline{a}^2\}}$,
the pyramid $\cP\Gamma^\infty_{\{a_i^2\}}$ is not concentrated,
which implies that
$E^{n(j)}_{\{\sqrt{n(j) - 1} \, a_{ij}\}_i}$ does not asymptotically concentrate
(see Theorem \ref{thm:asymp-conc}).

This completes the proof of the theorem.
\end{proof}

Let us next consider the convergence of the Gaussian spaces.

\begin{prop}
Let $\{a_{ij}\}$, $i=1,2,\dots,n(j)$, $j=1,2,\dots$,
be a sequence of nonnegative real numbers.
If $\sup_i a_{ij}$ diverges to infinity as $j\to\infty$,
then $\Gamma^{n(j)}_{\{a_{ij}^2\}}$ infinitely dissipates.
\end{prop}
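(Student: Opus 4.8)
The plan is to mimic the argument already used for solid ellipsoids in Proposition~\ref{prop:aij-diverge}, replacing the volume/surface measure by the Gaussian measure. The key observation is that the projection $\pi^{n(j)}_1 : \Gamma^{n(j)}_{\{a_{ij}^2\}} \to \Gamma^1_{a_{ij}^2}$ is $1$-Lipschitz and measure-preserving, so that
\[
\ObsDiam(\Gamma^{n(j)}_{\{a_{ij}^2\}};-\kappa)
\ge \diam(\gamma^1_{a_{1j}^2};1-\kappa)
\]
after exchanging coordinates so that $a_{1j} = \sup_i a_{ij}$. Since $\diam(\gamma^1_{t^2};1-\kappa) = t\,\diam(\gamma^1_{1^2};1-\kappa)$ by scaling, and $\diam(\gamma^1_{1^2};1-\kappa) > 0$ for every $\kappa \in (0,1)$, the hypothesis $a_{1j} \to \infty$ forces $\ObsDiam(\Gamma^{n(j)}_{\{a_{ij}^2\}};-\kappa) \to \infty$ for each fixed $\kappa \in (0,1)$. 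Proposition~\ref{prop:dissipate} then yields the infinite dissipation of $\{\Gamma^{n(j)}_{\{a_{ij}^2\}}\}$.

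More concretely, first I would, after permuting the axes (an mm-isomorphism), assume $a_{1j} = \sup_i a_{ij}$, which diverges to infinity. Second, I would note that $\Gamma^1_{a_{1j}^2}$ is dominated by $\Gamma^{n(j)}_{\{a_{ij}^2\}}$ via $\pi^{n(j)}_1$, so the observable diameter is monotone and the displayed inequality holds; in particular one may bound $\ObsDiam(\Gamma^{n(j)}_{\{a_{ij}^2\}};-\kappa)$ below by $\diam((\pi^{n(j)}_1)_*\gamma^{n(j)}_{\{a_{ij}^2\}};1-\kappa) = \diam(\gamma^1_{a_{1j}^2};1-\kappa)$. Third, since a centered one-dimensional Gaussian of variance $t^2$ is the pushforward of the standard Gaussian under $x \mapsto tx$, we get $\diam(\gamma^1_{a_{1j}^2};1-\kappa) = a_{1j}\,\diam(\gamma^1_{1^2};1-\kappa) \to \infty$ as $j \to \infty$. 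Finally, applying Proposition~\ref{prop:dissipate} with this divergence valid for every $\kappa \in (0,1)$ gives the claimed infinite dissipation.

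There is essentially no hard obstacle here; the proposition is the Gaussian analogue of Proposition~\ref{prop:aij-diverge}, and in fact slightly simpler because the pushforward under a coordinate projection of a product Gaussian is \emph{exactly} a lower-dimensional Gaussian, so there is no need for the Maxwell--Boltzmann approximation step. The only minor point to double-check is that $\diam(\gamma^1_{1^2};1-\kappa)$ is strictly positive for all $\kappa \in (0,1)$, which is immediate since a nondegenerate Gaussian assigns mass less than $1-\kappa$ to any bounded interval once the interval is small enough, so its $(1-\kappa)$-partial diameter is bounded below by a positive constant depending on $\kappa$. Thus the proof reduces to a one-line scaling identity together with the two already-established Propositions~\ref{prop:dissipate} and the monotonicity of $\ObsDiam$ under the Lipschitz order.
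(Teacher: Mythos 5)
Your proposal is correct and follows essentially the same route as the paper: exchange coordinates so that $a_{1j}=\sup_i a_{ij}$, use that $\Gamma^1_{a_{1j}^2}$ is dominated by $\Gamma^{n(j)}_{\{a_{ij}^2\}}$ together with monotonicity of the observable diameter to get $\ObsDiam(\Gamma^{n(j)}_{\{a_{ij}^2\}};-\kappa) \ge \diam(\gamma^1_{a_{1j}^2};1-\kappa) \to \infty$, and conclude by Proposition \ref{prop:dissipate}. The scaling identity you add is a harmless elaboration of the paper's one-line divergence claim.
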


\begin{proof}
Exchanging the coordinates, we assume that
$a_{1j}$ diverges to infinity as $j\to\infty$.
Since $\Gamma^1_{a_{1j}^2}$ is dominated by $\Gamma^{n(j)}_{\{a_{ij}^2\}}$,
we have
\[
\ObsDiam(\Gamma^{n(j)}_{\{a_{ij}^2\}};-\kappa)
\ge \diam(\Gamma^1_{a_{1j}^2};1-\kappa) \to \infty
\quad\text{as $j\to\infty$}. 
\]
This together with Proposition \ref{prop:dissipate} completes the proof.
\end{proof}

In a similar way as in the proof of Theorem \ref{thm:ellipsoid},
we obtain the following.

\begin{cor}
Let $\{a_{ij}\}$ satisfy {\rm(A0)--(A3)}.
Then, $\Gamma^{n(j)}_{\{a_{ij}^2\}}$ converges weakly to $\cP {\Gamma^\infty_{\{a_i^2\}}}$ as $j\to\infty$.
This convergence becomes a convergence in the concentration topology if and only if $\{a_i\}$ is an $l^2$-sequence.
Moreover, this convergence becomes an asymptotic concentration
if and only if $\{a_i\}$ converges to zero.

\end{cor}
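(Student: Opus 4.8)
The plan is to run the very same two-sided domination argument used to prove Theorem~\ref{thm:ellipsoid}, but with the isotropic transport maps $\varphi^{\mathcal{E}},\varphi^{\mathcal{S}}$ replaced by the obvious diagonal linear maps between Gaussian spaces; this makes the argument strictly easier, because the analogue of the hard Lemma~\ref{lem:2} becomes the \emph{definition} of $\cP\Gamma^\infty_{\{a_i^2\}}$ given in Subsection~\ref{ssec:Gaussian}: for any fixed sequence $\{b_i\}$ of nonnegative reals, $\cP\Gamma^n_{\{b_i^2\}}$ is monotone nondecreasing in $n$ and converges weakly to $\cP\Gamma^\infty_{\{b_i^2\}}$, so no Lipschitz estimates (and no analogues of Lemmas~\ref{lem:Lip} and~\ref{lem:DF}) are needed. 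Assume $\cP\Gamma^{n(j)}_{\{a_{ij}^2\}}$ does not converge weakly to $\cP\Gamma^\infty_{\{a_i^2\}}$; by $\rho$-compactness of $\Pi$ we may pass to a subsequence along which $\cP\Gamma^{n(j)}_{\{a_{ij}^2\}}$ converges weakly to a pyramid $\cP_\infty$, and it then suffices to prove $\cP_\infty\supset\cP\Gamma^\infty_{\{a_i^2\}}$ and $\cP_\infty\subset\cP\Gamma^\infty_{\{a_i^2\}}$, which forces $\cP_\infty=\cP\Gamma^\infty_{\{a_i^2\}}$, a contradiction.

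For $\cP_\infty\supset\cP\Gamma^\infty_{\{a_i^2\}}$ I would argue as in Lemma~\ref{lem:supset}, but more directly: setting $i_0:=\sup\{\,i\mid a_i>0\,\}$, for each finite $k\le i_0$ the projection $\pi^{n(j)}_k\colon\Gamma^{n(j)}_{\{a_{ij}^2\}}\to\Gamma^k_{\{a_{ij}^2\}}$ is $1$-Lipschitz and measure-preserving (it is a marginal of a product Gaussian), and since $a_{ij}\to a_i$ for each $i$ the finite-dimensional Gaussian measures $\gamma^k_{\{a_{ij}^2\}}$ converge weakly, hence $\dP(\gamma^k_{\{a_{ij}^2\}},\gamma^k_{\{a_i^2\}})\to0$, so by Proposition~\ref{prop:rho-dTV} the spaces $\Gamma^k_{\{a_{ij}^2\}}$ box-converge to $\Gamma^k_{\{a_i^2\}}$. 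Hence $\Gamma^k_{\{a_i^2\}}\in\cP_\infty$ for every finite $k\le i_0$, and since $\Gamma^k_{\{a_i^2\}}$ is mm-isomorphic to $\Gamma^{i_0}_{\{a_i^2\}}$ for all $k\ge i_0$ when $i_0<\infty$, this yields $\cP_\infty\supset\cP\Gamma^\infty_{\{a_i^2\}}$. (Unlike the ellipsoid case, no Maxwell--Boltzmann law is used here, only convergence of Gaussians with converging covariances.)

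For the reverse inclusion I would imitate Lemma~\ref{lem:subset}, again using the monotonicity of $a_{ij}$ in $i$ from {\rm(A2)} together with {\rm(A3)}. Write $a_\infty:=\lim_i a_i$. In the case $a_\infty>0$, choose $I(\varepsilon),J(\varepsilon)$ exactly as in the first part of the proof of Lemma~\ref{lem:subset} so that, with $b_{\varepsilon,i}:=a_i$ for $i\le I(\varepsilon)$ and $b_{\varepsilon,i}:=a_\infty$ for $i>I(\varepsilon)$, one has $a_{ij}\le(1+2\varepsilon)b_{\varepsilon,i}$ for all $i$ and all $j\ge J(\varepsilon)$; the diagonal linear map then gives $\Gamma^{n(j)}_{\{a_{ij}^2\}}\prec(1+2\varepsilon)\Gamma^{n(j)}_{\{b_{\varepsilon,i}^2\}}$, and since $n(j)\to\infty$ and $\cP\Gamma^n_{\{b_{\varepsilon,i}^2\}}$ converges weakly to $\cP\Gamma^\infty_{\{b_{\varepsilon,i}^2\}}$ by definition, we obtain $\cP_\infty\subset(1+2\varepsilon)\cP\Gamma^\infty_{\{b_{\varepsilon,i}^2\}}\subset(1+2\varepsilon)\cP\Gamma^\infty_{\{a_i^2\}}$ (using $b_{\varepsilon,i}\le a_i$ and continuity of scaling in $\rho$); letting $\varepsilon\to0+$ finishes this case. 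In the case $a_\infty=0$ I would follow the second part of the proof of Lemma~\ref{lem:subset} essentially verbatim: dominate $\Gamma^{n(j)}_{\{a_{ij}^2\}}$ by $\Gamma^{n(j)}_{\{b_{\varepsilon,i}^2\}}$ with $b_{\varepsilon,i}=(1+\varepsilon)a_i$ for $i<I(\varepsilon)$ and $b_{\varepsilon,i}=2\varepsilon$ for $i\ge I(\varepsilon)$ (taking $I(\varepsilon)=i_0+1$ when $i_0<\infty$), write $\Gamma^k_{\{b_{\varepsilon,i}^2\}}$ as the $l^2$-product of $\Gamma^{I(\varepsilon)-1}_{\{(1+\varepsilon)^2a_i^2\}}$ with a high-dimensional factor $\Gamma^{k-I(\varepsilon)+1}_{\{(2\varepsilon)^2\}}$ whose observable diameter $\tau(\varepsilon)$ tends to $0$, bound $\dconc$ between the two by $\tau(\varepsilon)$ via \cite{Sy:mmg}*{Proposition 7.32}, let $k\to\infty$, and pass to a limit along a sequence $\varepsilon(l)\to0$. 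I expect this $a_\infty=0$ subcase to be the only even slightly delicate point, exactly as it was in Lemma~\ref{lem:subset}; everything else becomes immediate once the transport map is taken to be linear.

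The two addenda then follow from the general theory exactly as at the end of the proof of Theorem~\ref{thm:ellipsoid}. By Subsection~\ref{ssec:Gaussian}, the limit pyramid $\cP\Gamma^\infty_{\{a_i^2\}}$ is of the form $\cP X$ for an mm-space $X$ if and only if $\sum_i a_i^2<+\infty$ (in which case $X=\Gamma^\infty_{\{a_i^2\}}$), and since $\iota\colon\cX\to\Pi$ is a topological embedding the weak convergence is a convergence in the concentration topology precisely in that case. Finally, the convergence is an asymptotic concentration if and only if $\cP\Gamma^\infty_{\{a_i^2\}}$ is concentrated (Theorem~\ref{thm:asymp-conc}), and this holds if and only if $a_i\to0$: for the ``if'' direction, the Ornstein--Uhlenbeck spectral argument of \cite{Sy:mmg}*{Corollary 7.35} applied to $\Gamma^n_{\{a_i^2\}}$ exhibits a sequence asymptotically concentrating with limit pyramid $\cP\Gamma^\infty_{\{a_i^2\}}$, so that pyramid is concentrated; for the ``only if'' direction, if $\underline{a}:=\inf_i a_i=a_\infty>0$ then $\cP\Gamma^\infty_{\{a_i^2\}}$ contains the pyramid $\cP\Gamma^\infty_{\{\underline{a}^2\}}$, which is not concentrated by \cite{Sy:mmg}*{Proposition 7.37}, hence $\cP\Gamma^\infty_{\{a_i^2\}}$ is not concentrated either.
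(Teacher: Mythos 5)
Your proposal is correct and follows essentially the same route as the paper's own proof: rule out a different weak subsequential limit $\cP_\infty$ by proving the two inclusions, getting $\cP_\infty\supset\cP\Gamma^\infty_{\{a_i^2\}}$ from the $1$-Lipschitz measure-preserving projections together with box convergence of the finite-dimensional Gaussians, and $\cP_\infty\subset\cP\Gamma^\infty_{\{a_i^2\}}$ by the same $b_{\varepsilon,i}$-domination argument split into the cases $a_\infty>0$ and $a_\infty=0$ as in Lemma \ref{lem:subset} (the paper's $a_\infty>0$ case just dominates by $(1+\varepsilon)\Gamma^k_{\{a_i^2\}}$ directly, a cosmetic difference), with the two addenda handled exactly as at the end of the proof of Theorem \ref{thm:ellipsoid}.
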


\begin{proof}
Suppose that $\Gamma^{n(j)}_{\{a_{ij}^2\}}$ does not converge weakly to
 $\cP {\Gamma^\infty_{\{a_i^2\}}}$ as $j\to\infty$.
Then there is a subsequence of $\{\cP {\Gamma^{n(j)}_{\{a_{ij}^2\}}}\}_j$
that converges weakly to a pyramid $\cP_\infty$ different from $\cP {\Gamma^\infty_{\{a_i^2\}}}$.
We write such a subsequence by the same notation $\{\cP {\Gamma^{n(j)}_{\{a_{ij}^2\}}\}_j}$.

Since $\Gamma^k_{\{a_{ij}^2\}}$ is dominated by $\Gamma^{n(j)}_{\{a_{ij}^2\}}$ for $k \le n(j)$
and $\Gamma^k_{\{a_{ij}^2\}}$ converges weakly to $\Gamma^k_{\{a_i^2\}}$ as $j\to\infty$,
we see that $\Gamma^k_{\{a_i^2\}}$ belongs to $\cP_\infty$ for any $k$, so that
\[
\cP_\infty \supset \cP {\Gamma^\infty_{\{a_i^2\}}}.
\]

We prove $\cP_\infty \subset \cP {\Gamma^\infty_{\{a_i^2\}}}$ in the case of $a_\infty > 0$,
where $a_\infty := \lim_{i\to\infty} a_i$.
Under $a_\infty > 0$, the same discussion as in the proof of Lemma \ref{lem:supset}
proves that there are two numbers $I(\varepsilon)$ and $J(\varepsilon)$
for any $\varepsilon > 0$ such that \eqref{eq:aijb1} and \eqref{eq:aijb2} both hold.
We therefore see that, for any $k$ and $j \ge J(\varepsilon)$,
$\Gamma^k_{\{a_{ij}^2\}}$ is dominated by $(1+\varepsilon)\Gamma^k_{\{a_i^2\}}$,
and so $\cP {\Gamma^{n(j)}_{\{a_{ij}^2\}_i}} \subset (1+\varepsilon)\cP {\Gamma^\infty_{\{a_i^2\}}}$.
This proves $\cP_\infty \subset \cP {\Gamma^\infty_{\{a_i^2\}}}$.

We next prove $\cP_\infty \subset \cP {\Gamma^\infty_{\{a_i^2\}}}$ in the case of $a_\infty = 0$.
Let $b_{\varepsilon,i}$ be as in Lemma \ref{lem:subset}.
The discussion in the proof of Lemma \ref{lem:subset}
yields that $a_{ij} < b_{\varepsilon,i}$ for any $i$ and for every sufficiently large $j$,
which implies $\cP_\infty \subset \cP {\Gamma^\infty_{\{b_{\varepsilon,i}^2\}}}$.
We obtain $\cP_\infty \subset \cP {\Gamma^\infty_{\{a_i^2\}}}$
in the same way as in the proof of Lemma \ref{lem:subset}.
The weak convergence of $\Gamma^{n(j)}_{\{a_{ij}^2\}}$
to $\cP {\Gamma^\infty_{\{b_{\varepsilon,i}^2\}}}$ has been proved.

The rest is identical to the proof of Theorem \ref{thm:ellipsoid}.
This completes the proof.
\end{proof}

\section{Box convergence of ellipsoids}

The main purpose of this section is to prove Theorem \ref{thm:box-conv}.

Let us first prove the weak convergence of $e^n_{\{\sqrt{n-1}\,a_{ij}\}}$
if $\{a_{ij}\}$ $l^2$-converges.

\begin{lem} \label{lem:eg}
Let $\mathcal{A}$ be a family of sequences of positive real numbers
such that
\[
\sup_{\{a_i\} \in \mathcal{A}} \sum_{i=1}^\infty a_i^2 < +\infty.
\]
Then we have
\begin{align}
&\lim_{n\to\infty} \sup_{\{a_i\} \in \mathcal{A}}
\dP(\epsilon^n_{\{\sqrt{n-1}\,a_i\}},\gamma^n_{\{a_i^2\}}) = 0,\tag{1}\\
&\limsup_{n\to\infty} \sup_{\{a_i\} \in \mathcal{A}}
W_2(\sigma_{\{\sqrt{n - 1} \, a_i\}}^{n - 1},\gamma_{\{a_i^2\}}^n)^2
\le \sqrt{2} e^{-1} \sup_{\{a_i\} \in \mathcal{A}}
\sum_{i=k+1}^\infty a_i^2 \tag{2}
\end{align}
for any positive integer $k$.
\end{lem}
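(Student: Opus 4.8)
The plan is to realize both comparisons by the explicit transport maps $\varphi^{\mathcal{E}}$ and $\varphi^{\mathcal{S}}$ constructed above and to estimate their $L^2$-costs, everything being governed by the single constant $C_0 := \sup_{\{a_i\}\in\mathcal{A}}\sum_{i=1}^\infty a_i^2 < +\infty$. Write $L := L^n_{\{a_i\}}$, and let $\varphi$ be either $\varphi^{\mathcal{E}}$ or $\varphi^{\mathcal{S}}$, so that $\varphi$ transports $\gamma^n_{\{a_i^2\}}$ to $e^n_{\{\sqrt{n-1}\,a_i\}}$ and $\varphi(x) = \frac{\rho(\|L^{-1}x\|)}{\|L^{-1}x\|}\,x$ for $x \neq o$, where $\rho(r) := \sqrt{n-1}\,\bigl(I_{n-1}^{-1}\int_0^r t^{n-1}e^{-t^2/2}\,dt\bigr)^{1/n}$ in the solid case and $\rho(r) \equiv \sqrt{n-1}$ in the spherical case; in both cases $0 \le \rho(r) \le \sqrt{n-1}$.

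First I would reduce the transport cost to a one-dimensional quantity. Substituting $x = Lz$, so that $z$ is distributed as $\gamma^n_{\{1^2\}}$ and $\|L^{-1}x\| = \|z\|$, $\|Lz\|^2 = \sum_i a_i^2 z_i^2$, one computes
\[
\int_{\R^n}\|\varphi(x) - x\|^2\,d\gamma^n_{\{a_i^2\}}(x)
= \sum_{i=1}^n a_i^2 \int_{\R^n}\bigl(\rho(\|z\|) - \|z\|\bigr)^2\frac{z_i^2}{\|z\|^2}\,d\gamma^n_{\{1^2\}}(z).
\]
By the rotational invariance of $\gamma^n_{\{1^2\}}$ the unit vector $z/\|z\|$ is uniform on the sphere and independent of $\|z\|$, so $\int_{\R^n}(z_i^2/\|z\|^2)\,d\gamma^n_{\{1^2\}}(z) = 1/n$ and the right-hand side equals $\frac1n\bigl(\sum_i a_i^2\bigr)\,\mathbb{E}[(\rho(\chi_n) - \chi_n)^2]$, where $\chi_n := \|z\|$ has the chi distribution with $n$ degrees of freedom ($\mathbb{E}[\chi_n^2] = n$ and $\mathbb{E}[\chi_n] = \sqrt2\,\Gamma(\tfrac{n+1}2)/\Gamma(\tfrac n2)$). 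Since $W_2(e^n_{\{\sqrt{n-1}\,a_i\}},\gamma^n_{\{a_i^2\}})^2$ is bounded by this cost and the prefactor is $\le C_0/n$, the whole lemma comes down to showing that $\mathbb{E}[(\rho(\chi_n) - \chi_n)^2]$ stays bounded as $n \to \infty$.

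That boundedness I would extract from log-convexity of the Gamma function: $\Gamma(\tfrac n2)^2 \le \Gamma(\tfrac{n-1}2)\Gamma(\tfrac{n+1}2) = \frac{2}{n-1}\Gamma(\tfrac{n+1}2)^2$, whence $\mathbb{E}[\chi_n] \ge \sqrt{n-1}$. In the spherical case $\rho \equiv \sqrt{n-1}$, so $\mathbb{E}[(\rho(\chi_n) - \chi_n)^2] = 2n - 1 - 2\sqrt{n-1}\,\mathbb{E}[\chi_n] \le 1$. In the solid case the probability integral transform makes $U := I_{n-1}^{-1}\int_0^{\chi_n} t^{n-1}e^{-t^2/2}\,dt$ uniform on $[0,1]$, so $\rho(\chi_n) = \sqrt{n-1}\,U^{1/n}$ while $\chi_n$ is a nondecreasing function of $U$; since $U \mapsto U^{1/n}$ is nondecreasing as well, Chebyshev's correlation inequality (two nondecreasing functions of the same variable are positively correlated) gives $\mathbb{E}[\rho(\chi_n)\chi_n] \ge \sqrt{n-1}\,\mathbb{E}[U^{1/n}]\,\mathbb{E}[\chi_n] \ge (n-1)\,\frac{n}{n+1}$, which together with $\mathbb{E}[\rho(\chi_n)^2] = (n-1)\,\frac{n}{n+2}$ yields $\mathbb{E}[(\rho(\chi_n) - \chi_n)^2] \le \frac{n(n+5)}{(n+1)(n+2)} < 2$. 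Hence in both cases $W_2(e^n_{\{\sqrt{n-1}\,a_i\}},\gamma^n_{\{a_i^2\}})^2 \le 2C_0/n$, uniformly over $\{a_i\}\in\mathcal{A}$.

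Finally, reading off the assertions: for (1), $\dP \le W_1^{1/2} \le W_2^{1/2}$ (from $\dP^2 \le W_1$ and $W_1 \le W_2$), so $\sup_{\{a_i\}\in\mathcal{A}}\dP(\epsilon^n_{\{\sqrt{n-1}\,a_i\}},\gamma^n_{\{a_i^2\}}) \le (2C_0/n)^{1/4} \to 0$; for (2), the spherical estimate gives $\sup_{\{a_i\}\in\mathcal{A}}W_2(\sigma^{n-1}_{\{\sqrt{n-1}\,a_i\}},\gamma^n_{\{a_i^2\}})^2 \le C_0/n \to 0$, so the left-hand side of the claimed inequality is $0$ and the inequality holds a fortiori for every $k$ (the constant $\sqrt2\,e^{-1}$ and the truncation index $k$ would only appear via a cruder, split-at-$k$ estimate of the radial moment using the Maxwell--Boltzmann law on the first $k$ coordinates). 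I expect the one genuinely delicate point to be the uniform boundedness of $\mathbb{E}[(\rho(\chi_n) - \chi_n)^2]$: without $\mathbb{E}[\chi_n] \ge \sqrt{n-1}$ one gets only $W_2^2 = O(1)$ rather than $o(1)$, which is enough for a weak form of (2) but not for (1).
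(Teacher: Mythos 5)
Your proposal is correct, and it takes a genuinely different route from the paper. For part (1) the paper never estimates the global transport cost of $\varphi^{\mathcal{E}}$: it truncates both measures to the annulus $\theta\sqrt{n-1}\le r\le\theta^{-1}\sqrt{n-1}$ (paying total-variation errors $1-v_{\theta,n}$, $1-w_{\theta,n}$ that vanish by concentration of the radial distributions), transports only inside the annulus where the radial ratio lies in $[\theta^2,\theta^{-1}]$, bounds the cost by $\max\{(1-\theta)^2,(\theta^2-1)^2\}\sum_i a_i^2$, and finally lets $\theta\to 1$. You instead bound the full cost $\int\|\varphi^{\mathcal{E}}(x)-x\|^2\,d\gamma^n_{\{a_i^2\}}$, and your treatment of the radial factor is the one genuinely new ingredient: writing $R(\chi_n)=\sqrt{n-1}\,U^{1/n}$ with $U$ uniform via the probability integral transform, using the Chebyshev correlation inequality for the two nondecreasing functions $U^{1/n}$ and $F^{-1}(U)$, and the Gautschi-type bound $\mathbb{E}[\chi_n]\ge\sqrt{n-1}$ from log-convexity of $\Gamma$; I checked the resulting bound $\mathbb{E}[(R(\chi_n)-\chi_n)^2]\le n(n+5)/((n+1)(n+2))<2$ and it is correct, as is the analogous bound $\le 1$ in the spherical case. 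For part (2) you and the paper use the same map $\varphi^{\mathcal{S}}$ and the same radial/angular factorization, but where the paper bounds the angular integral by splitting the sphere at the set $S^{n-1}_{k,\varepsilon}$ (which is what produces the constant and the tail $\sum_{i>k}a_i^2$ in the stated inequality), you evaluate it exactly as $\frac1n\sum_{i=1}^n a_i^2\le C_0/n$; combined with the bounded radial second moment this yields the stronger conclusion that the $\limsup$ in (2) is actually $0$, uniformly over $\mathcal{A}$, and hence the stated inequality a fortiori. Your argument buys a shorter, quantitative proof (a uniform rate $W_2^2\le 2C_0/n$, hence $\dP=O(n^{-1/4})$ in (1)) and avoids the appeal to \cite{Sy:mmg}*{Lemma 7.41}; the paper's annulus/truncation argument for (1) is more robust in that it needs only the crude two-sided ratio bound on the radial map rather than exact moment identities, and its weaker form of (2) is all that is used later (Proposition \ref{prop:box-conv} and Lemma \ref{lem:b}), so no downstream application is affected by your strengthening.
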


\begin{proof}
We prove (1).
Let $r(x) := \|L^{-1}(x)\|$ as in Section \ref{sec:weak-conv}.
Take any real number $\theta$ with $0 < \theta < 1$
and fix it.
Let us consider the normalization of the measures
$\epsilon^n_{\{\sqrt{n-1}\,a_i\}}|_{r^{-1}([\,\theta\sqrt{n-1},\sqrt{n-1}\,])}$
and
$\gamma^n_{\{a_i^2\}}|_{r^{-1}([\,\theta\sqrt{n-1},\theta^{-1}\sqrt{n-1}\,])}$,
which we denote by $\epsilon^n_\theta$ and $\gamma^n_\theta$,
respectively.  Set
\begin{align*}
v_{\theta,n} &:= \epsilon_{\sqrt{n-1}}^n(\{\,x \in \R^n \mid
\theta\sqrt{n-1} \le \|x\| \le \sqrt{n-1}\,\}),\\
w_{\theta,n} &:= \gamma_{\{1^2\}}^n(\{\,x \in \R^n \mid
\theta\sqrt{n-1} \le \|x\| \le \theta^{-1}\sqrt{n-1}\,\}).
\end{align*}
We remark that
\begin{align*}
v_{\theta,n} &= \epsilon_{\{\sqrt{n-1}\,a_i\}}^n(r^{-1}([\,\theta\sqrt{n-1},\sqrt{n-1}\,])),\\
w_{\theta,n} &= \gamma_{\{a_i^2\}}^n(r^{-1}([\,\theta\sqrt{n-1},\theta^{-1}\sqrt{n-1}\,])),\\
&\lim_{n\to\infty} v_{\theta,n} = \lim_{n\to\infty} w_{\theta,n} = 1.
\end{align*}
It then holds that
\begin{align}
\label{eq:eg1}
\dP(\epsilon^n_{\{\sqrt{n-1}\,a_i\}},\epsilon^n_\theta)
&\le \dTV(\epsilon^n_{\{\sqrt{n-1}\,a_i\}},\epsilon^n_\theta)
= 1-v_{\theta,n},\\
\label{eq:eg2}
\dP(\gamma^n_{\{a_i^2\}},\gamma^n_\theta)
&\le \dTV(\gamma^n_{\{a_i^2\}},\gamma^n_\theta)
= 1-w_{\theta,n}.
\end{align}
To estimate $\dP(\epsilon^n_\theta,\gamma^n_\theta)$,
we define a transport map, say $\psi$, from $\gamma^n_\theta$ to $\epsilon^n_\theta$ in the same manner as for $\varphi^{\mathcal{E}}$
in Section \ref{sec:weak-conv}, which is expressed as
\[
\psi(x) = \frac{\tilde{R}}{r} x,
\quad x \in r^{-1}([\,\theta\sqrt{n-1},\theta^{-1}\sqrt{n-1}\,]),
\]
where $\tilde{R}$ is the function of variable $r \in [\,\theta\sqrt{n-1},\theta^{-1}\sqrt{n-1}\,]$ defined by
\[
\theta\sqrt{n-1} \le \tilde{R} \le \sqrt{n-1}
\quad\text{and}\quad
\gamma_\theta^n(B_r(o)) = \epsilon_\theta^n(B_R(o)).
\]
Since $\theta^2 \le \tilde{R}/r \le \theta^{-1}$, we have
\begin{align*}
W_2(\epsilon^n_\theta,\gamma^n_\theta)^2
&\le \int_{\R^n} \|\,\psi(x)-x\,\|^2 \,d\gamma^n_\theta(x)
= \int_{\R^n} \left( \frac{\tilde{R}}{r} - 1\right)^2 \|x\|^2 \,d\gamma^n_\theta(x)\\
&\le \max\{(1-\theta)^2,(\theta^2-1)^2\}
\int_{\R^n} \|x\|^2 \, d\gamma^n_\theta(x)\\
&\le \frac{\max\{(1-\theta)^2,(\theta^2-1)^2\}} 
{\gamma^n_{\{a_i^2\}}(r^{-1}([\,\theta\sqrt{n-1},\theta^{-1}\sqrt{n-1}\,]))}
 \int_{\R^n} \|x\|^2 \, d\gamma^n_{\{a_i^2\}}(x)\\
&= \frac{\max\{(1-\theta)^2,(\theta^2-1)^2\}}{w_{\theta,n}} \sum_{i=1}^n a_i^2,
\end{align*}
which together with \eqref{eq:eg1} and \eqref{eq:eg2} implies
\begin{align*}
&\dP(\epsilon^n_{\{\sqrt{n-1}\,a_i\}},\gamma^n_{\{a_i^2\}})\\
&\le 2 - v_{\theta,n} - w_{\theta,n}
+ \left( \frac{\max\{(1-\theta)^2,(\theta^2-1)^2\}}{w_{\theta,n}} \sum_{i=1}^n a_i^2 \right)^{\frac{1}{4}}
\end{align*}
and hence
\begin{align*}
&\limsup_{n\to\infty} \sup_{\{a_i\} \in \mathcal{A}} \dP(\epsilon^n_{\{\sqrt{n-1}\,a_i\}},\gamma^n_{\{a_i^2\}})\\
&\le \left( \max\{(1-\theta)^2,(\theta^2-1)^2\} \sup_{\{a_i\} \in \mathcal{A}}\sum_{i=1}^\infty a_i^2 \right)^{\frac{1}{4}}
\to 0 \quad\text{as $\theta \to 1+$}.
\end{align*}
This proves (1).

We prove (2).
Using the transport map $\varphi^{\mathcal{S}}$
from $\gamma_{\{a_i^2\}}^n$ to $\sigma_{\{\sqrt{n-1}\,a_i\}}^{n - 1}$
in Section \ref{sec:weak-conv}, we have
\begin{align*}
& W_2(\sigma_{\{\sqrt{n - 1} \, a_i\}}^{n - 1},\gamma_{\{a_i^2\}}^n)^2
\le \int_{\R^n} \left\|\, z - \varphi^{\mathcal{S}}(z) \,\right\|^2 \, d\gamma^n_{\{a_i^2\}}(z)\\
&= \int_{S^{n - 1}(1)} \sum_{i=1}^n a_i^2 x_i^2 \, d\sigma^{n - 1}(x)
\cdot \frac{1}{I_{n - 1}} \int_0^\infty (r-\sqrt{n - 1})^2 r^{n - 1} e^{-r^2/2} \, dr,
\end{align*}
where $I_m := \int_0^\infty t^m e^{-t^2/2}\,dt$.
We see in the proof of \cite{Sy:mmg}*{Lemma 7.41}
that $r^m e^{-r^2/2} \le m^{m/2} e^{-m/2} e^{-(r-\sqrt{m})^2/2}$
and also that $I_m \sim \sqrt{\pi} (m-1)^{m/2} e^{-(m-1)/2}$.
Therefore,
\begin{align*}
&\frac{1}{I_{n - 1}} \int_0^\infty (r-\sqrt{n - 1})^2 r^{n - 1} e^{-r^2/2} \, dr
\le \frac{1}{I_{n - 1}} \sqrt{2\pi} (n - 1)^{(n - 1)/2} e^{-(n - 1)/2}\\
&\sim \frac{\sqrt{2} e^{-1/2}}{(1-1/(n - 1))^{(n - 1)/2}} \longrightarrow \sqrt{2}e^{-1}
\ \text{as}\ n\to\infty.
\end{align*}
For any $\varepsilon > 0$ and $k$ with $1 \le k \le n - 1$, let
$S_{k,\varepsilon}^{n - 1} := \{\,x \in S^{n - 1}(1) \mid
|x_i| < \varepsilon\ \text{for}\ i=1,2,\dots,k\,\}$.
Then,
\begin{align*}
\int_{S^{n - 1}(1) \setminus S_{k,\varepsilon}^{n - 1}} \sum_{i=1}^n a_i^2 x_i^2 \, d\sigma^{n - 1}(x)
&\le \sigma^{n - 1}(S^{n - 1}(1) \setminus S_{k,\varepsilon}^{n - 1}) \sum_{i=1}^n a_i^2,\\
\int_{S_{k,\varepsilon}^{n - 1}} \sum_{i=1}^n a_i^2 x_i^2 \, d\sigma^{n - 1}(x)
&\le \varepsilon^2 \sum_{i=1}^k a_i^2 + \sum_{i=k+1}^n a_i^2,
\end{align*}
which imply
\begin{align*}
&\sup_{\{a_i\} \in \mathcal{A}} 
\int_{S^{n - 1}(1)} \sum_{i=1}^n a_i^2 x_i^2 \, d\sigma^{n - 1}(x)\\
&\le (\sigma^{n - 1}(S^{n - 1}(1) \setminus S_{k,\varepsilon}^{n - 1}) + \varepsilon^2)
\sup_{\{a_i\} \in \mathcal{A}} \sum_{i=1}^\infty a_i^2
+ \sup_{\{a_i\} \in \mathcal{A}} \sum_{i=k+1}^\infty a_i^2.
\end{align*}
Since $\sigma^{n - 1}(S^{n - 1}(1) \setminus S_{k,\varepsilon}^{n - 1})$ tends to zero
as $n \to\infty$, we obtain (2).
This completes the proof.
\end{proof}

\begin{prop} \label{prop:box-conv}
Let $\{a_{ij}\}$, $i=1,2,\dots,n(j)$, $j=1,2,\dots$, be a sequence of 
positive real numbers,
where $\{n(j)\}$, $j=1,2,\dots$, is a sequence of positive integers
divergent to infinity.
Let $\{a_i\}$, $i=1,2,\dots$, be an $l_2$-sequence of nonnegative real numbers.
We assume
\[
\lim_{j\to\infty} \sum_{i=1}^{n(j)} (a_{ij} - a_i)^2 = 0.
\]
Then we have
\begin{enumerate}
\item $\epsilon_{\{\sqrt{n(j) - 1}\,a_{ij}\}_i}^{n(j)}$ converges weakly to
$\gamma_{\{a_i^2\}_i}^\infty$ as $j\to\infty${\rm;}
\item $\sigma_{\{\sqrt{n(j) - 1}\,a_{ij}\}_i}^{n(j) - 1}$ converges to
$\gamma_{\{a_i^2\}_i}^\infty$ in the $2$-Wasserstein metric
as $j\to\infty$.
\end{enumerate}
In particular, $E^{n(j)}_{\{\sqrt{n(j) - 1}\,a_{ij}\}_i}$ box converges to
$\Gamma^\infty_{\{a_i^2\}_i}$ as $j\to\infty$.
\end{prop}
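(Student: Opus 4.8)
The plan is to place every measure in sight on the separable Hilbert space $H=\{x\in\R^\infty\mid\|x\|<+\infty\}$ — the finite-dimensional ones being supported on the coordinate subspace $\R^{n(j)}$ — and to estimate distances through the intermediate Gaussians $\gamma^{n(j)}_{\{a_{ij}^2\}}$ and $\gamma^{n(j)}_{\{a_i^2\}}$, the first leg being the ellipsoid-versus-Gaussian comparison of Lemma~\ref{lem:eg} and the other two being purely Gaussian. A preliminary observation I would record is that $\sum_{i=1}^{n(j)}a_{ij}^2\le 2\sum_{i=1}^{n(j)}(a_{ij}-a_i)^2+2\sum_{i=1}^\infty a_i^2$, so that for each $J$ the family $\mathcal{A}_J:=\{\{a_{ij}\}_i\mid j\ge J\}$ (after padding each finite sequence by a fixed summable positive tail, which leaves the finite-dimensional objects unchanged) has uniformly bounded $l^2$-norms and is admissible in Lemma~\ref{lem:eg}.

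For the two Gaussian legs, the product of the coordinatewise optimal couplings gives $W_2(\gamma^{n(j)}_{\{a_{ij}^2\}},\gamma^{n(j)}_{\{a_i^2\}})^2\le\sum_{i=1}^{n(j)}(a_{ij}-a_i)^2$, which tends to $0$ by hypothesis; and, writing $\gamma^{n(j)}_{\{a_i^2\}}$ as $\gamma^{n(j)}_{\{a_i^2\}}\otimes\delta_0$ on $H$ and coupling the first $n(j)$ coordinates by the identity, $W_2(\gamma^{n(j)}_{\{a_i^2\}},\gamma^\infty_{\{a_i^2\}})^2\le\sum_{i>n(j)}a_i^2\to 0$ (here $\sum a_i^2<+\infty$ is what makes $\gamma^\infty_{\{a_i^2\}}$ a genuine Borel measure on $H$). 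Since $\dP^2\le W_1\le W_2$, both legs also go to $0$ in the Prokhorov distance.

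Then (1) is immediate: Lemma~\ref{lem:eg}(1) applied to $\mathcal{A}_J$ gives $\dP(\epsilon^{n(j)}_{\{\sqrt{n(j)-1}\,a_{ij}\}_i},\gamma^{n(j)}_{\{a_{ij}^2\}})\to 0$, and the triangle inequality with the two Gaussian legs gives $\dP(\epsilon^{n(j)}_{\{\sqrt{n(j)-1}\,a_{ij}\}_i},\gamma^\infty_{\{a_i^2\}})\to 0$, i.e. weak convergence on $H$. For (2), Lemma~\ref{lem:eg}(2) only bounds a limsup by an $l^2$-tail: one gets $\limsup_{j\to\infty}W_2(\sigma^{n(j)-1}_{\{\sqrt{n(j)-1}\,a_{ij}\}_i},\gamma^{n(j)}_{\{a_{ij}^2\}})^2\le\sqrt2\,e^{-1}\sup_{j\ge J}\sum_{i>k}a_{ij}^2$ for every $k$, and estimating $\sum_{i>k}a_{ij}^2\le 2\sup_{j\ge J}\sum_{i=1}^{n(j)}(a_{ij}-a_i)^2+2\sum_{i>k}a_i^2$ (modulo the negligible padding tail) and then letting $k\to\infty$ and afterwards $J\to\infty$ forces this limsup to $0$. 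Adding the two Gaussian legs gives $W_2(\sigma^{n(j)-1}_{\{\sqrt{n(j)-1}\,a_{ij}\}_i},\gamma^\infty_{\{a_i^2\}})\to 0$, which is (2). Finally, since $\mathcal{E}^{n(j)}_{\{\sqrt{n(j)-1}\,a_{ij}\}_i}$ and $\mathcal{S}^{n(j)-1}_{\{\sqrt{n(j)-1}\,a_{ij}\}_i}$ are mm-isomorphic to $(H,\|\cdot\|,\epsilon^{n(j)}_{\{\sqrt{n(j)-1}\,a_{ij}\}_i})$ and $(H,\|\cdot\|,\sigma^{n(j)-1}_{\{\sqrt{n(j)-1}\,a_{ij}\}_i})$ and $\Gamma^\infty_{\{a_i^2\}}=(H,\|\cdot\|,\gamma^\infty_{\{a_i^2\}})$, Proposition~\ref{prop:rho-dTV} yields $\square(E^{n(j)}_{\{\sqrt{n(j)-1}\,a_{ij}\}_i},\Gamma^\infty_{\{a_i^2\}})\le 2\dP(e^{n(j)}_{\{\sqrt{n(j)-1}\,a_{ij}\}_i},\gamma^\infty_{\{a_i^2\}})\to 0$, the Prokhorov distance going to $0$ by (1) in the solid case and by (2) together with $\dP^2\le W_2$ in the spherical case.

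I expect the only genuinely delicate point to be the double limit for the sphere — extracting true $W_2$-convergence from the limsup-by-a-tail bound of Lemma~\ref{lem:eg}(2), which requires letting the truncation index $k$ and the rate index $J$ tend to infinity in the correct order; everything else is routine triangle-inequality bookkeeping once the measures live on the common Hilbert space $H$.
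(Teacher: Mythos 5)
Your proof is correct and is essentially the paper's own argument: compare the (solid) ellipsoid with the finite-dimensional Gaussian $\gamma^{n(j)}_{\{a_{ij}^2\}}$ via Lemma \ref{lem:eg}, compare the Gaussians in $W_2$ (the paper invokes Gelbrich's formula after padding $\{a_{ij}\}_i$ by zeros, where you use the coordinatewise product coupling plus an extra leg for the dimension mismatch), and conclude by the triangle inequality, $\dP^2\le W_1\le W_2$, and $\square\le 2\dP$ for measures on the common Hilbert space $H$. Your $\mathcal{A}_J$/double-limit bookkeeping for the spherical case is just a more explicit rendering of the paper's limsup computation, so the two proofs coincide in substance.
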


\begin{proof}
We first prove (2).
We set $a_{ij} := 0$ for $i \ge n(j)+1$.
Note that the assumption implies the $l_2$-convergence
of $\{a_{ij}\}$ to $\{a_i\}$ as $j\to\infty$.
Lemma \ref{lem:eg}(2) implies
\begin{align*}
&\limsup_{j\to\infty}
W_2(\sigma_{\{\sqrt{n(j) - 1}\,a_{ij}\}_i}^{n(j) - 1},\gamma_{\{a_{ij}^2\}_i}^{n(j)})^2\\
&\le \sqrt{2}e^{-1} \limsup_{j\to\infty} \sum_{i=k+1}^\infty a_{ij}^2
= \sqrt{2}e^{-1} \sum_{i=k+1}^\infty a_i^2
\longrightarrow 0 \ \text{as}\ k\to\infty.
\end{align*}
Gelbrich's formula \cite{Gel:formulaW2} tells us that
\[
W_2(\gamma_{\{a_{ij}^2\}_i}^{n(j)},\gamma_{\{a_i^2\}}^\infty)^2
= \sum_{i=1}^\infty (a_{ij} - a_i)^2 \longrightarrow 0\ \text{as}\ j\to\infty.
\]
By a triangle inequality, we obtain (2).

(1) is proved in the same way by using Lemma \ref{lem:eg}(1) and
by remarking $\dP^2 \le W_2$.
This completes the proof.
\end{proof}

\begin{lem} \label{lem:b}
Let $\{b_{ij}\}$, $i=1,2,\dots,n(j)$, $j=1,2,\dots$, be a sequence of 
positive real numbers,
where $\{n(j)\}$, $j=1,2,\dots$, is a sequence of positive integers divergent to infinity.
If $\sum_{i=1}^{n(j)} b_{ij}^2$ converges to
a positive real number as $j\to\infty$,
then $\{e^{n(j)}_{\{\sqrt{n(j)-1}\,b_{ij}\}}\}$ has no subsequence
converging weakly to the Dirac measure $\delta_o$ at the origin $o$ in $H$,
where we embed the {\rm(}solid{\rm)} ellipsoids
$E^{n(j)}_{\{\sqrt{n(j)-1}\,b_{ij}\}} \subset \R^{n(j)}$
into the Hilbert space $H$ naturally and consider
$e_{\{\sqrt{n(j) - 1}\,b_{ij}\}_i}^{n(j)}$ as Borel
probability measures on $H$.
\end{lem}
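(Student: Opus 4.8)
The plan is to control the second and fourth moments of $\mu_{j}:=e^{n(j)}_{\{\sqrt{n(j)-1}\,b_{ij}\}_i}$, regarded as a Borel probability measure on $H$: the second moment stays bounded away from $0$ while the fourth moment stays bounded, and a Paley--Zygmund type estimate then keeps a definite fraction of the mass of $\mu_j$ outside a fixed ball about $o$, which is incompatible with weak convergence to $\delta_o$.

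First I would put $c:=\lim_{j\to\infty}\sum_{i=1}^{n(j)}b_{ij}^2>0$ and $\alpha_{ij}:=\sqrt{n(j)-1}\,b_{ij}$, and record two elementary facts: $s_j:=\sum_{i=1}^{n(j)}b_{ij}^2$ is bounded in $j$ since it converges, and $\sum_i b_{ij}^4\le\bigl(\sum_i b_{ij}^2\bigr)^2=s_j^2$ since the $b_{ij}$ are nonnegative. Now $\mu_j$ is the push-forward, by the diagonal linear map $L^{n(j)}_{\{\alpha_{ij}\}}$, of either the normalized volume measure on the unit sphere $S^{n(j)-1}(1)$ (when $E^{n(j)}$ is the ellipsoid) or the normalized Lebesgue measure on the unit ball $B^{n(j)}(1)$ (when it is the solid ellipsoid). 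Writing $P_j:=\sum_i\alpha_{ij}^2=(n(j)-1)s_j$ and $Q_j:=\sum_i\alpha_{ij}^4\le P_j^2$, and using the standard second and fourth moments of the uniform measures on $S^{n-1}(1)$ and $B^n(1)$ (namely $\int x_i^2=\tfrac1n$, $\int x_i^2x_k^2=\tfrac1{n(n+2)}$, $\int x_i^4=\tfrac3{n(n+2)}$ for the sphere, and the same with $n\mapsto n+2$ and $n(n+2)\mapsto(n+2)(n+4)$ for the ball), one finds
\[
m_j:=\int_H\|y\|^2\,d\mu_j(y)=\lambda_j P_j
\qquad\text{and}\qquad
M_j:=\int_H\|y\|^4\,d\mu_j(y)=\kappa_j\bigl(P_j^2+2Q_j\bigr),
\]
with $\lambda_j$ of order $n(j)^{-1}$ and $\kappa_j$ of order $n(j)^{-2}$. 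Hence $m_j\to c$, and $M_j\le 3\kappa_j P_j^2\le 3s_j^2\le K$ for some constant $K<\infty$ independent of $j$.

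Next I would fix $T:=\sqrt c/2$. For every $j$ large enough that $m_j>c/2$, splitting the integral defining $m_j$ at $\|y\|=T$ and estimating the outer part by the Cauchy--Schwarz inequality yields $m_j\le T^2+M_j^{1/2}\,\mu_j(\{\|y\|>T\})^{1/2}$, and therefore
\[
\mu_j\bigl(\{\,y\in H:\|y\|>T\,\}\bigr)\ \ge\ \Bigl(\frac{m_j-T^2}{M_j^{1/2}}\Bigr)^{2}\ \ge\ \frac{c^2}{16K}\ =:\ \eta\ >\ 0
\]
(this is just the Paley--Zygmund inequality applied to $\|y\|^2$). Thus $\mu_j(B_T(o))\le 1-\eta$ for all large $j$. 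Finally, if some subsequence $\{\mu_{j(l)}\}_l$ converged weakly to $\delta_o$, then, since $B_T(o)$ is open and contains $\supp\delta_o=\{o\}$, the portmanteau theorem would force $\mu_{j(l)}(B_T(o))\to 1$, contradicting $\mu_{j(l)}(B_T(o))\le 1-\eta$. This proves the lemma.

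The only genuinely computational ingredient is the moment evaluation, which is routine once one notes the inequality $\sum_i b_{ij}^4\le(\sum_i b_{ij}^2)^2$ --- this is exactly what keeps $M_j$ bounded, and getting the right explicit constants $\lambda_j,\kappa_j$ for both the ball and the sphere is the only mildly delicate point. I would stress that the convergence $m_j\to c>0$ on its own is \emph{not} enough, because a vanishing fraction of the mass could in principle run off to distance of order $\sqrt{n(j)}$; it is precisely the uniform bound on the fourth moment that rules this out.
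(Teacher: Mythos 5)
Your proof is correct. The moment identities you quote for the uniform measures on $S^{n-1}(1)$ and $B^n(1)$ are right, the inequality $\sum_i b_{ij}^4\le\bigl(\sum_i b_{ij}^2\bigr)^2$ does give a uniform bound on the fourth moment $M_j$, and the Cauchy--Schwarz (Paley--Zygmund) step combined with the portmanteau theorem excludes weak subconvergence to $\delta_o$. Your route is genuinely different from the paper's, though both hinge on the second moment staying near $c>0$. The paper computes only second moments: for the ellipsoid $\mathcal{S}^{n(j)-1}$ it evaluates $W_2(\sigma^{n(j)-1}_{\{\sqrt{n(j)-1}\,b_{ij}\}},\delta_o)^2=\bigl(\sum_i b_{ij}^2\bigr)(n(j)-1)\int_{S^{n(j)-1}(1)}x_1^2\,d\sigma^{n(j)-1}$, uses the Maxwell--Boltzmann law to see this tends to the positive limit, and concludes via Lemma \ref{lem:Wp} that weak convergence to $\delta_o$ (together with the bounded second moments) would force $W_2\to 0$, a contradiction; for the solid ellipsoid it does not compute directly at all, but first replaces $\epsilon^{n(j)}_{\{\sqrt{n(j)-1}\,b_{ij}\}}$ by the Gaussian $\gamma^{n(j)}_{\{b_{ij}^2\}}$ using Lemma \ref{lem:eg}(1) and argues the same way. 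Your argument treats sphere and ball uniformly by elementary fourth-moment computations, needs neither the Gaussian comparison nor the Wasserstein characterization, and makes explicit---through the fourth-moment bound---exactly the uniform-integrability point that in the paper is absorbed into the moment hypothesis of Lemma \ref{lem:Wp}; as you correctly stress, convergence of the second moment alone would not rule out a vanishing fraction of mass escaping to distance of order $\sqrt{n(j)}$. What the paper's version buys is brevity, since it reuses machinery already established (Lemma \ref{lem:eg}, the Maxwell--Boltzmann law, Lemma \ref{lem:Wp}); what yours buys is a self-contained, quantitative estimate ($\mu_j(\{\|y\|>\sqrt{c}/2\})\ge c^2/16K$ for large $j$) at the modest cost of the routine fourth-moment evaluation.
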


\begin{proof}
We first prove the lemma for $\sigma^{n(j)-1}_{\{\sqrt{n(j)-1}\,b_{ij}\}}$.
It holds that
\begin{align*}
W_2(\sigma_{\{\sqrt{n(j) - 1}\,b_{ij}\}_i}^{n(j) - 1},\delta_o)^2
&= \int_{\mathcal{S}_{\{\sqrt{n(j) - 1}\,b_{ij}\}_i}^{n(j) - 1}} \|y\|^2
\, d\sigma_{\{\sqrt{n(j) - 1}\,b_{ij}\}_i}^{n(j) - 1}(y)\\
&= \int_{S^{n - 1}(1)} \sum_{i=1}^{n(j)} (n(j) - 1) b_{ij}^2 x_i^2
\, d\sigma^{n(j) - 1}(x)\\
&= \left(\sum_{i=1}^{n(j)} b_{ij}^2\right)
(n(j) - 1) \int_{S^{n - 1}(1)} x_1^2 \, d\sigma^{n(j) - 1}(x).
\end{align*}
It follows from the Maxwell-Boltzmann distribution law that
\[
\lim_{j\to\infty} (n(j) - 1)\int_{S^{n - 1}(1)} x_1^2 \, d\sigma^{n(j) - 1}(x) = 1.
\]
We therefore have
\[
\lim_{j\to\infty} W_2(\sigma_{\{\sqrt{n(j) - 1}\,b_{ij}\}_i}^{n(j) - 1},\delta_o)^2
= \lim_{j\to\infty} \sum_{i=1}^{n(j)} b_{ij}^2 > 0,
\]
so that $\{\sigma_{\{\sqrt{n(j) - 1}\,b_{ij}\}_i}^{n(j) - 1}\}$ has no
subsequence converging weakly to $\delta_o$.

We prove the lemma for $\epsilon^{n(j)}_{\{\sqrt{n(j)-1}\,b_{ij}\}}$.
Applying Lemma \ref{lem:eg}(1) yields that
\[
\lim_{j\to\infty} \dP(\epsilon^{n(j)}_{\{\sqrt{n(j)-1}\,b_{ij}\}},\gamma^{n(j)}_{\{b_{ij}^2\}}) = 0.
\]
We also have
\[
\lim_{j\to\infty} W_2(\gamma^{n(j)}_{\{b_{ij}^2\}},\delta_o)^2
= \lim_{j\to\infty} \int_{\R^n} \|x\|^2 \, d\gamma^{n(j)}_{\{b_{ij}^2\}}(x)
= \lim_{j\to\infty} \sum_{i=1}^{n(j)} b_{ij}^2 > 0
\]
and hence $\{\gamma^{n(j)}_{\{b_{ij}^2\}}\}$ does not have
a subsequence converging weakly to $\delta_o$
and so does $\{\epsilon^{n(j)}_{\{\sqrt{n(j)-1}\,b_{ij}\}}\}$.
This completes the proof of the lemma.
\end{proof}

The following lemma is the special case of Theorem \ref{thm:box-conv}
where the limit is a one-point mm-space.

\begin{lem} \label{lem:box-conv}
Let $\{a_{ij}\}$, $i=1,2,\dots,n(j)$, $j=1,2,\dots$, be a sequence of 
positive real numbers,
where $\{n(j)\}$, $j=1,2,\dots$, is a sequence of positive integers divergent to infinity.
We assume that
\begin{align}
\tag{i} &\lim_{j\to\infty} a_{ij} = 0 \quad\text{for any $i$},\\
\tag{ii} &\liminf_{j\to\infty} \sum_{i=1}^{n(j)} a_{ij}^2 > 0.
\end{align}
Then, there exists no box convergent subsequence of
$\{E_{\{\sqrt{n(j) - 1}\,a_{ij}\}_i}^{n(j)}\}$.
\end{lem}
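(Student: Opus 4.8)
The plan is to argue by contradiction, so suppose some subsequence of the given sequence box converges; keep denoting it by $\{E_j\}$, write $e_j$ for the normalized measure of $E_j:=E^{n(j)}_{\{\sqrt{n(j)-1}\,a_{ij}\}_i}$, and let $Y\in\cX$ be its box limit.

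\emph{Step 1: the box limit is a one-point space.} Since $\dconc\le\square$, the pyramids $\cP E_j$ converge weakly to $\cP Y$; in particular $\{E_j\}$ does not infinitely dissipate (which would mean $\cP E_j\to\cX$, impossible as $\cX$ is not the pyramid of any mm-space), so by Proposition~\ref{prop:aij-diverge} the quantities $\sup_i a_{ij}$ are bounded and (A0) holds. Relabelling the coordinates of each $\R^{n(j)}$ replaces $E_j$ by an mm-isomorphic space and affects neither the box limit nor the hypotheses (the sum $\sum_i a_{ij}^2$ is permutation-invariant), so after relabelling and passing to a further subsequence we may assume (A0)--(A3) hold; by (i) then $\sup_i a_{ij}=a_{1j}\to 0$, so in particular $a_i=0$ for every $i$. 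Theorem~\ref{thm:ellipsoid} now gives $\cP E_j\to\cP\Gamma^\infty_{\{0\}}$, which is the pyramid of the one-point mm-space $\{*\}$; by uniqueness of weak limits $\cP Y=\cP\{*\}$, that is $Y=\{*\}$ and $\square(E_j,\{*\})\to 0$.

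\emph{Step 2: reformulation through partial diameters.} By Lemma~\ref{lem:box-mmiso}, $\square(E_j,\{*\})\le\varepsilon$ yields a $3\varepsilon$-mm-isomorphism $E_j\to\{*\}$, whose nonexceptional domain $\tilde E_j$ has $e_j(\tilde E_j)\ge 1-3\varepsilon$ and, the metric of $\{*\}$ being trivial, $\diam\tilde E_j\le 3\varepsilon$; thus $\diam(E_j;1-3\varepsilon)\le 3\varepsilon$. Taking $\varepsilon_j\downarrow 0$ with $\varepsilon_j>\square(E_j,\{*\})$ and using that $s\mapsto\diam(E_j;s)$ is nondecreasing while $1-3\varepsilon_j>3/4$ for large $j$, we get $\diam(E_j;3/4)\le 3\varepsilon_j\to 0$. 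I shall contradict this by bounding $\diam(E_j;3/4)$ below by a fixed positive constant for all large $j$.

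\emph{Step 3: the geometric lower bound.} Write $E_j=L_j(Z_j)$, where $Z_j$ is the unit sphere $S^{n(j)-1}(1)$ or the unit ball $B^{n(j)}(1)$ of $\R^{n(j)}$, $L_j:=\sqrt{n(j)-1}\,\diag(a_{1j},\dots,a_{n(j),j})$, and $e_j=(L_j)_*\mu_j$ with $\mu_j$ the normalized uniform (respectively, Lebesgue) measure on $Z_j$; crucially $\mu_j$ is invariant under $x\mapsto -x$ and under coordinate permutations. Fix any Borel set $A\subset\R^{n(j)}$ with $e_j(A)\ge 3/4$ and put $B:=L_j^{-1}(A)$, so $\mu_j(B)\ge 3/4$ and, by symmetry, $\mu_j(-B)\ge 3/4$, whence $\mu_j(B\cap(-B))\ge 1/2$. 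The function $x\mapsto\|L_jx\|$ on $Z_j$ is $\sqrt{n(j)-1}\,(\sup_i a_{ij})$-Lipschitz, and
\[
\int_{Z_j}\|L_jx\|^2\,d\mu_j(x)=(n(j)-1)\Bigl(\sum_{i=1}^{n(j)}a_{ij}^2\Bigr)\int_{Z_j}x_1^2\,d\mu_j(x)\ge\frac{1}{4}\sum_{i=1}^{n(j)}a_{ij}^2,
\]
using $\int_{Z_j}x_1^2\,d\mu_j\ge 1/(2n(j))$; by (ii) this integral exceeds a fixed constant $c_0>0$ for all large $j$. Since $\sup_i a_{ij}\to 0$, the Lipschitz constant of $\|L_j\cdot\|$ is $o(\sqrt{n(j)-1})$, so Lévy's concentration inequality on $Z_j$ gives $\|L_jx\|^2\ge c_0/2$ for $x$ outside a subset of $Z_j$ of $\mu_j$-measure $o(1)$. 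Intersecting this complement with $B\cap(-B)$ (measure $\ge 1/2$), for all large $j$ we obtain $x$ with $x,-x\in B$ and $\|L_jx\|^2\ge c_0/2$; then $L_jx$ and $-L_jx$ both belong to $A$, so $\diam A\ge 2\|L_jx\|\ge\sqrt{2c_0}$. As $A$ was arbitrary, $\diam(E_j;3/4)\ge\sqrt{2c_0}$ for all large $j$, contradicting Step~2 and proving the lemma.

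\emph{Where the difficulty lies.} The delicate step is the first one: box convergence is a statement about isomorphism classes and is not detectable from the behaviour of the measures inside a fixed Hilbert space, so one cannot conclude directly from (i) that the measures collapse — it is the already-established Theorem~\ref{thm:ellipsoid} that pins the box limit down to a point. Everything after that is metric geometry: a uniform lower bound on partial diameters, obtained by exploiting the central symmetry of the ellipsoidal measures (a large-mass set must contain an antipodal pair $x,-x$) together with Lévy's concentration inequality (forcing such a pair to stay a definite distance apart).
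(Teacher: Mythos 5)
Your opening step coincides with the paper's: after sorting/relabelling, Theorem~\ref{thm:ellipsoid} is used to identify the box limit with the one-point space (your verification of (A0) via Proposition~\ref{prop:aij-diverge} is a detail the paper leaves implicit). From there you take a genuinely different route. The paper rescales to $b_{ij}:=a_{ij}/\max\{A_j,1\}$ so that $\sum_i b_{ij}^2$ subconverges to a finite positive number, views the measures as Borel measures on the Hilbert space $H$, rules out weak subconvergence to $\delta_o$ by second-moment/Wasserstein estimates (Lemmas~\ref{lem:eg} and~\ref{lem:b}), and then exploits central symmetry: the nonexceptional domain of an $\varepsilon_j$-mm-isomorphism to a point lies in a ball $B_j$ of radius $\varepsilon_j$ which must miss the origin, so $B_j$ and $-B_j$ are disjoint although each carries measure at least $1-\varepsilon_j$. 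You instead stay on the pre-image sphere/ball and bound the $3/4$-partial diameter from below: central symmetry of $\mu_j$ yields an antipodal pair in $L_j^{-1}(A)$, hypothesis (ii) gives the second-moment lower bound for $\|L_j\cdot\|$, and L\'evy concentration (the Lipschitz constant being $\sqrt{n(j)-1}\,\sup_i a_{ij}=o(\sqrt{n(j)-1})$) keeps $\|L_jx\|$ bounded below off a set of measure $o(1)$, producing two points of $A$ at distance at least $\sqrt{2c_0}$, against $\diam(E_j;3/4)\to 0$. This avoids the normalization, the embedding into $H$, and Lemmas~\ref{lem:eg}--\ref{lem:b} altogether (and copes with $\sum_i a_{ij}^2\to\infty$ without rescaling), at the cost of importing L\'evy's inequality on $S^{n-1}(1)$ and on $B^n(1)$ and of one glossed step: the deviation inequality concentrates $\|L_jx\|$ around its \emph{median} $M_j$, so to conclude that $\{\|L_jx\|^2<c_0/2\}$ has measure $o(1)$ you should first record that $\int(\|L_jx\|-M_j)^2\,d\mu_j=O\bigl((\sup_i a_{ij})^2\bigr)=o(1)$ (integrate the tail), whence $M_j\ge\sqrt{c_0}-o(1)$; both facts are standard, so this is minor.

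One caveat, inherited from the paper rather than introduced by you: the parenthetical ``affects neither the box limit nor the hypotheses'' only justifies preservation of (ii). Hypothesis (i) concerns each fixed row $i$ and is not invariant under a permutation that varies with $j$: for $a_{jj}=1$ and $a_{ij}=1/j$ otherwise, (i) and (ii) hold, yet after sorting $a_{1j}\equiv 1$ (and these ellipsoids in fact box converge to $\Gamma^1_{1^2}$, so the statement really requires the stronger reading). Hence ``$\sup_i a_{ij}=a_{1j}\to 0$'' does not follow from (i) alone; what your Step~3 (and equally the paper's appeal to Theorem~\ref{thm:ellipsoid} after sorting) actually uses is $\sup_i a_{ij}\to 0$, which does hold in the lemma's sole application, since the $b_{ik}$ in the proof of Theorem~\ref{thm:box-conv} are monotone nonincreasing in $i$ with $b_{1k}\to 0$. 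As the paper's own proof makes the identical move, I count this as a shared imprecision in the hypotheses, not a gap specific to your argument.
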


\begin{proof}
Let $\{a_{ij}\}$ be a sequence as in the assumption of the theorem.
Sorting $\{a_{ij}\}$ in ascending order in $i$,
we may assume that $a_{ij}$ is monotone nonincreasing in $i$
for each $j$.
We suppose that $\{E_{\{\sqrt{n(j) - 1}\,a_{ij}\}_i}^{n(j)}\}$
has a box convergent subsequence, for which we use
the same notation.
Then, by (i) and Theorem \ref{thm:ellipsoid},
the box limit of $\{E_{\{\sqrt{n(j) - 1}\,a_{ij}\}_i}^{n(j)}\}$
is mm-isomorphic to a one-point mm-space.
We set
\[
A_j := \left( \sum_{i=1}^{n(j)} a_{ij}^2 \right)^{\frac12}
\quad\text{and}\quad
b_{ij} := \frac{a_{ij}}{\max\{A_j,1\}}.
\]
Since $b_{ij} \le a_{ij}$,
we see that $E_{\{\sqrt{n(j) - 1}\,b_{ij}\}_i}^{n(j)}$
is dominated by $E_{\{\sqrt{n(j) - 1}\,a_{ij}\}_i}^{n(j)}$,
so that $E_{\{\sqrt{n(j) - 1}\,b_{ij}\}_i}^{n(j)}$ box converges
to a one-point mm-space as $j\to\infty$.
We remark that
\[
\liminf_{j\to\infty} \sum_{i=1}^{n(j)} b_{ij}^2 > 0
\quad\text{and}\quad
\sum_{i=1}^{n(j)} b_{ij}^2 \le 1.
\]
Taking a subsequence again, we assume that
$\sum_{i=1}^{n(j)} b_{ij}^2$ converges to a positive real number
as $j\to\infty$.
Applying Lemma \ref{lem:b} yields that
$\{e^{n(j)}_{\{\sqrt{n(j)-1}\,b_{ij}\}}\}$ has no subsequence
converging weakly to $\delta_o$ in $H$.
Since $E_{\{\sqrt{n(j) - 1}\,b_{ij}\}_i}^{n(j)}$
box converges to a one-point mm-space, say $*$, as $j\to\infty$,
Lemma \ref{lem:box-mmiso} implies that
there is a sequence of $\varepsilon_j$-mm-isomorphisms
$f_j : E_{\{\sqrt{n(j) - 1}\,b_{ij}\}_i}^{n(j)} \to *$ with $\varepsilon_j \to 0+$
as $j\to\infty$.  A nonexceptional domain of $f_j$ has
$e^{n(j)}_{\{\sqrt{n(j) - 1}\,b_{ij}\}}$-measure at least $1-\varepsilon_j$
and diameter at most $\varepsilon_j$.
There is a closed metric ball $B_j \subset H$
of radius $\varepsilon_j$
that contains the nonexceptional domain of $f_j$.
Note that $e_{\{\sqrt{n(j) - 1}\,b_{ij}\}_i}^{n(j)}(B_j) \ge 1-\varepsilon_j
\to 1$ as $j\to\infty$.
If $B_j$ were to contain the origin $o$ of $H$ for infinitely many $j$,
then a subsequence of $\{e_{\{\sqrt{n(j) - 1}\,b_{ij}\}_i}^{n(j)}\}$
would converge weakly to $\delta_o$, which is a contradiction.
Thus, all but finitely many $B_j$ do not contain the origin of $H$,
and $B_j$ do not intersect $-B_j$ for any such $B_j$.
Since $e_{\{\sqrt{n(j) - 1}\,b_{ij}\}_i}^{n(j)}$ is centrally symmetric with respect
to the origin, we see that $e_{\{\sqrt{n(j) - 1}\,a_{ij}\}_i}^{n(j)}(-B_j)
= e_{\{\sqrt{n(j) - 1}\,a_{ij}\}_i}^{n(j)}(B_j) \ge 1-\varepsilon_j$,
which is a contradiction if $j$ is large enough.
This completes the proof.
\end{proof}

\begin{lem} \label{lem:box-dom}
Let $\{X_n\}$, $n=1,2,\dots$, be a box convergent sequence of mm-spaces
and $\{Y_n\}$, $n=1,2,\dots$, a sequence of mm-spaces
with $Y_n \prec X_n$.
Then, $\{Y_n\}$ has a box convergent subsequence.
\end{lem}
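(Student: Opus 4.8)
The plan is to deduce box-precompactness of $\{Y_n\}$ from that of $\{X_n\}$; since $(\cX,\square)$ is complete, a $\square$-totally-bounded family is relatively compact, so a box-convergent subsequence of $\{Y_n\}$ will then exist automatically. Because $\{X_n\}$ is box convergent it is a $\square$-Cauchy sequence, hence $\square$-precompact, and by Gromov's box-precompactness theorem (see \cite{Gmv:green}*{Chapter 3$\frac{1}{2}_+$} and \cite{Sy:mmg}) this means $\{X_n\}$ is \emph{uniformly totally bounded}: for every $\varepsilon>0$ there are $N(\varepsilon)\in\N$ and $\Delta(\varepsilon)>0$ such that each $X_n$ has a Borel subset $X_{n,\varepsilon}$ with $\mu_{X_n}(X_n\setminus X_{n,\varepsilon})\le\varepsilon$, $\diam X_{n,\varepsilon}\le\Delta(\varepsilon)$, and $X_{n,\varepsilon}$ covered by $N(\varepsilon)$ open balls of radius $\varepsilon$. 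It therefore suffices to show that $\{Y_n\}$ is again uniformly totally bounded, for then the same theorem yields its $\square$-precompactness.

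First I would fix, for each $n$, a $1$-Lipschitz measure-preserving map $f_n:X_n\to Y_n$ with $(f_n)_*\mu_{X_n}=\mu_{Y_n}$, which exists since $Y_n\prec X_n$. Given $\varepsilon>0$ and $n$, I would take $X_{n,\varepsilon}$ as above together with the centers $x_1,\dots,x_{N(\varepsilon)}$ of its covering balls, which we may assume meet $X_{n,\varepsilon}$, and set $Y_{n,\varepsilon}:=\bigcup_{i=1}^{N(\varepsilon)}\bar B_{Y_n}(f_n(x_i),\varepsilon)$. This set is closed, hence Borel, and contains $f_n(X_{n,\varepsilon})$ because $f_n$ is $1$-Lipschitz; since $f_n$ pushes $\mu_{X_n}$ forward to $\mu_{Y_n}$, it follows that $\mu_{Y_n}(Y_n\setminus Y_{n,\varepsilon})\le\mu_{X_n}(X_n\setminus X_{n,\varepsilon})\le\varepsilon$. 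Moreover $\diam Y_{n,\varepsilon}$ is bounded in terms of $\Delta(\varepsilon)$ and $\varepsilon$ alone, and $Y_{n,\varepsilon}$ is covered by $N(\varepsilon)$ balls of radius $2\varepsilon$. After the obvious harmless reparametrization of the functions $N(\cdot)$ and $\Delta(\cdot)$, this exhibits $\{Y_n\}$ as uniformly totally bounded, completing the argument.

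The step requiring the most care — and the one I would regard as the main obstacle — is this transfer of uniform total boundedness under Lipschitz domination: one must check that \emph{both} defining conditions, the uniform partial-diameter bound and, less obviously, the uniform bound on the number of small balls covering a large-measure part, are inherited when one pushes forward along $f_n$; and one must deal with the measure-theoretic point that the set-theoretic image $f_n(X_{n,\varepsilon})$ of a Borel set under a continuous map need not itself be Borel, which is precisely why I replace it with the finite union of closed balls $Y_{n,\varepsilon}$. The remaining ingredients — completeness of $(\cX,\square)$, the fact that a box-convergent sequence is $\square$-Cauchy, and Gromov's precompactness criterion — are standard and already available.
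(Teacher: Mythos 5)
Your argument is correct and is essentially the paper's route: the paper's proof simply invokes \cite{Sy:mmg}*{Lemma 4.28}, which encapsulates exactly the box-precompactness (uniform total boundedness) criterion and its inheritance under Lipschitz domination that you spell out. Your write-up correctly handles the two delicate points (the partial-diameter bound, without which the criterion fails, and the measurability issue resolved by replacing $f_n(X_{n,\varepsilon})$ with a finite union of closed balls), so it amounts to a self-contained proof of the cited lemma.
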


\begin{proof}
The lemma follows from \cite{Sy:mmg}*{Lemma 4.28}.
\end{proof}

\begin{proof}[Proof of Theorem {\rm\ref{thm:box-conv}}]
We assume (A0)--(A3).

The `if' part follows from Proposition \ref{prop:box-conv}.

We prove the `only if' part.
Suppose that $\{E^{n(j)}_{\{\sqrt{n(j) - 1}\,a_{ij}\}_i}\}$
is box convergent
and that $\{a_{ij}\}_i$ does not $l^2$-converge
to $\{a_i\}$ as $j\to\infty$.
We first prove that $\{a_i\}$ is an $l^2$-sequence.
This is because, if not, then, by Theorem \ref{thm:ellipsoid},
the weak limit of $\{E^{n(j)}_{\{\sqrt{n(j) - 1}\,a_{ij}\}_i}\}$ is not an mm-space, which is a contradiction to the box convergence.
Replacing $\{a_{ij}\}_i$ with a subsequence with respect to the index $j$,
we assume that $\lim_{j\to\infty} \sum_{i=1}^{n(j)} a_{ij}^2$ exists
in $[\,0,+\infty\,]$.
We prove
\begin{equation} \label{eq:sum}
\lim_{j\to\infty} \sum_{i=1}^{n(j)} a_{ij}^2 > \sum_{i=1}^\infty a_i^2.
\end{equation}
In fact, if the left-hand side of \eqref{eq:sum} is infinity,
then this is clear.
If not, the Banach-Alaoglu theorem tells us the existence of
an $l^2$-weakly convergent subsequence of $\{a_{ij}\}$.
Since $\{a_{ij}\}_i$ does not converge to $\{a_i\}$ $l^2$-strongly as $j\to\infty$,
we obtain \eqref{eq:sum}.

Take a real number $\varepsilon_0$ in such a way that
\[
0 < \varepsilon_0 < \lim_{j\to\infty} \sum_{i=1}^{n(j)} a_{ij}^2 - \sum_{i=1}^\infty a_i^2.
\]
Setting
\[
a_{ijk} :=
\begin{cases}
  a_{kj} &\text{if $i \le k$},\\
  a_{ij} &\text{if $i \ge k+1$},
\end{cases}
\]
we have
\begin{align*}
\lim_{j\to\infty} \sum_{i=1}^{n(j)} a_{ijk}^2
&= \lim_{j\to\infty} \left( \sum_{i=1}^k a_{ijk}^2 + \sum_{i=k+1}^{n(j)} a_{ijk}^2 \right)
= k a_k^2 + \lim_{j\to\infty} \sum_{i=k+1}^{n(j)} a_{ij}^2\\
&= k a_k^2 + \lim_{j\to\infty} \sum_{i=1}^{n(j)} a_{ij}^2 - \sum_{i=1}^k a_i^2
> \varepsilon_0.
\end{align*}
Thus, for any positive integer $k$ there is $j(k)$
such that
\[
\sum_{i=1}^{n(j(k))} a_{ij(k)k}^2 > \varepsilon_0
\quad\text{and}\quad
|\, a_{kj(k)} - a_k\,| < \frac{1}{k}.
\]
Letting $b_{ik} := a_{ij(k)k}$, we observe the following.
\begin{itemize}
\item $b_{ik} \le a_{ij(k)}$ for any $i$ and $k$.
\item $b_{ik}$ is monotone nonincreasing in $i$ for each $k$.
\item $b_{1k} = a_{1j(k)k} = a_{kj(k)} < a_k + 1/k \to 0$ as $k\to\infty$.
\item $\sum_{i=1}^{n(j(k))} b_{ik}^2 > \varepsilon_0 > 0$ for any $k$.
\end{itemize}
Consider $E_k := E^{n(j(k))}_{\{\sqrt{n(j(k)) - 1}\,b_{ik}\}_i}$.
It follows from Lemma \ref{lem:domE}
that $E_k$ is dominated by
$E^{n(j(k))}_{\{\sqrt{n(j(k)) - 1}\,a_{ij(k)}\}_i}$ for any $k$
and so Lemma \ref{lem:box-dom} implies that
$\{E_k\}$ has a box convergent subsequence.
However, Lemma \ref{lem:box-conv} proves that
$\{E_k\}$ has no box convergent subsequence,
which is a contradiction.
This completes the proof.
\end{proof}

%\begin{rem} \label{rem:exercise}
%In Exercise \cite{Gmv:green}*{3$\frac{1}{2}$.57(E)},
%the following four cases are presented for a sequence
%$\{\mathcal{E}^{n(j)}_{\{\sqrt{n(j)-1}\,a_{ij}\}_i}\}_j$ of solid ellipsoids:
%\begin{enumerate}
%\item $a_{ij} = \epsilon_i\sqrt{i/n(j)}$ and $\epsilon_i \to 0$ as $i\to\infty$.
%\item $a_{ij} = \sqrt{i/n(j)}$.
%\item $a_{ij} = a_i \to 0$ as $i\to\infty$.
%\item $a_{ij} = a_i$ and $\sum_{i=1}^\infty a_i^2 < +\infty$.
%\end{enumerate}
%
%We set $E_j := \mathcal{E}^{n(j)}_{\{\sqrt{n(j)}\,a_{ij}\}}$.
%
%In the case of (1),
%it is easy to see that $\sup_i a_{ij} \to 0$ as $j\to\infty$,
%so that $E_j$ converges to a one-point mm-space as $j\to\infty$
%by looking at the observable diameter of $E_j$.
%
%In the case of (2),
%we see that $\lim_{j\to\infty} a_{n(j)+1-i,j} = 1$ for each $i$,
%so that the sequence of the coordinate functions forms
% a non-precompact family in $(\Lip_1(E_j)/\sim,\dKF)$,
% which implies that $\{E_j\}$ does not asymptotically concentrates
%(see the proof of \cite{Sy:mmg}*{Proposition 7.37}).
%
%In the case of (3), \red{????}
%
%In the case of (4), the box convergence of $\{E_j\}$ follows from
%Proposition \ref{prop:box-conv}.
%
%\end{rem}

\begin{bibdiv}
  \begin{biblist}    

\bib{Bog:Gaussian}{book}{
   author={Bogachev, Vladimir I.},
   title={Gaussian measures},
   series={Mathematical Surveys and Monographs},
   volume={62},
   publisher={American Mathematical Society},
   place={Providence, RI},
   date={1998},
   pages={xii+433},
   isbn={0-8218-1054-5},
%   review={\MR{1642391 (2000a:60004)}},
}

\bib{CC:strI}{article}{
   author={Cheeger, Jeff},
   author={Colding, Tobias H.},
   title={On the structure of spaces with Ricci curvature bounded below. I},
   journal={J. Differential Geom.},
   volume={46},
   date={1997},
   number={3},
   pages={406--480},
   issn={0022-040X},
%   review={\MR{1484888}},
}

\bib{CC:strII}{article}{
   author={Cheeger, Jeff},
   author={Colding, Tobias H.},
   title={On the structure of spaces with Ricci curvature bounded below. II},
   journal={J. Differential Geom.},
   volume={54},
   date={2000},
   number={1},
   pages={13--35},
   issn={0022-040X},
%   review={\MR{1815410}},
}
		
\bib{CC:strIII}{article}{
   author={Cheeger, Jeff},
   author={Colding, Tobias H.},
   title={On the structure of spaces with Ricci curvature bounded below.
   III},
   journal={J. Differential Geom.},
   volume={54},
   date={2000},
   number={1},
   pages={37--74},
   issn={0022-040X},
%   review={\MR{1815411}},
}
		
\bib{Fk:Laplace}{article}{
   author={Fukaya, Kenji},
   title={Collapsing of Riemannian manifolds and eigenvalues of Laplace
   operator},
   journal={Invent. Math.},
   volume={87},
   date={1987},
   number={3},
   pages={517--547},
   issn={0020-9910},
%   review={\MR{874035}},
   doi={10.1007/BF01389241},
}

\bib{FS:conc}{article}{
   author={Funano, Kei},
   author={Shioya, Takashi},
   title={Concentration, Ricci curvature, and eigenvalues of Laplacian},
   journal={Geom. Funct. Anal.},
   volume={23},
   date={2013},
   number={3},
   pages={888--936},
   issn={1016-443X},
%   review={\MR{3061776}},
   doi={10.1007/s00039-013-0215-x},
}

\bib{Gel:formulaW2}{article}{
   author={Gelbrich, Matthias},
   title={On a formula for the $L^2$ Wasserstein metric between measures on
   Euclidean and Hilbert spaces},
   journal={Math. Nachr.},
   volume={147},
   date={1990},
   pages={185--203},
   issn={0025-584X},
%   review={\MR{1127323}},
   doi={10.1002/mana.19901470121},
}

\bib{GMS:conv}{article}{
   author={Gigli, Nicola},
   author={Mondino, Andrea},
   author={Savar\'{e}, Giuseppe},
   title={Convergence of pointed non-compact metric measure spaces and
   stability of Ricci curvature bounds and heat flows},
   journal={Proc. Lond. Math. Soc. (3)},
   volume={111},
   date={2015},
   number={5},
   pages={1071--1129},
   issn={0024-6115},
%   review={\MR{3477230}},
   doi={10.1112/plms/pdv047},
}

\bib{Gmv:green}{book}{
   author={Gromov, Misha},
   title={Metric structures for Riemannian and non-Riemannian spaces},
   series={Modern Birkh\"auser Classics},
   edition={Reprint of the 2001 English edition},
   note={Based on the 1981 French original;
   With appendices by M. Katz, P. Pansu and S. Semmes;
   Translated from the French by Sean Michael Bates},
   publisher={Birkh\"auser Boston Inc.},
   place={Boston, MA},
   date={2007},
   pages={xx+585},
   isbn={978-0-8176-4582-3},
   isbn={0-8176-4582-9},
%   review={\MR{2307192 (2007k:53049)}},
}

\bib{Kz:prod}{article}{
    title={Concentration of product spaces},
    author={Kazukawa, Daisuke},
    year={2019},
    eprint={arXiv:1909.11910 [math.MG]},
}

\bib{KOS:rough}{article}{
author={Kazukawa, Daisuke},
author={Ozawa, Ryunosuke},
author={Suzuki, Norihiko},
title={Stabilities of rough curvature dimension condition},
journal={J. Math. Soc. Japan},
date={2019},
doi={10.2969/jmsj/81468146},
}

\bib{Ld:conc}{book}{
   author={Ledoux, Michel},
   title={The concentration of measure phenomenon},
   series={Mathematical Surveys and Monographs},
   volume={89},
   publisher={American Mathematical Society},
   place={Providence, RI},
   date={2001},
   pages={x+181},
   isbn={0-8218-2864-9},
%   review={\MR{1849347 (2003k:28019)}},
}

\bib{Lv:prob}{book}{
   author={L{\'e}vy, Paul},
   title={Probl\`emes concrets d'analyse fonctionnelle. Avec un compl\'ement
   sur les fonctionnelles analytiques par F. Pellegrino},
   language={French},
   note={2d ed},
   publisher={Gauthier-Villars},
   place={Paris},
   date={1951},
   pages={xiv+484},
%   review={\MR{0041346 (12,834a)}},
}

\bib{LV:Ric}{article}{
   author={Lott, John},
   author={Villani, C\'{e}dric},
   title={Ricci curvature for metric-measure spaces via optimal transport},
   journal={Ann. of Math. (2)},
   volume={169},
   date={2009},
   number={3},
   pages={903--991},
   issn={0003-486X},
%   review={\MR{2480619}},
   doi={10.4007/annals.2009.169.903},
}

\bib{Ml:heritage}{article}{
   author={Milman, V. D.},
   title={The heritage of P.\ L\'evy in geometrical functional analysis},
   note={Colloque Paul L\'evy sur les Processus Stochastiques (Palaiseau,
   1987)},
   journal={Ast\'erisque},
   number={157-158},
   date={1988},
   pages={273--301},
   issn={0303-1179},
%   review={\MR{976223 (91d:01005)}},
}

\bib{Nk:isop-error}{article}{
    title={Isoperimetric inequality on a metric measure space and Lipschitz order with an additive error},
    author={Nakajima, Hiroki},
    year={2019},
    eprint={arXiv:1902.07424 [math.MG]},
}

\bib{Oz:quantum}{article}{
   author={Ozawa, Ryunosuke},
   title={Concentration function for pyramid and quantum metric measure
   space},
   journal={Proc. Amer. Math. Soc.},
   volume={145},
   date={2017},
   number={3},
   pages={1301--1315},
   issn={0002-9939},
%   review={\MR{3589327}},
   doi={10.1090/proc/13282},
}

\bib{OzSz:Talagrand}{article}{
   author={Ozawa, Ryunosuke},
   author={Suzuki, Norihiko},
   title={Stability of Talagrand's inequality under concentration topology},
   journal={Proc. Amer. Math. Soc.},
   volume={145},
   date={2017},
   number={10},
   pages={4493--4501},
   issn={0002-9939},
%   review={\MR{3690632}},
   doi={10.1090/proc/13580},
}
		
\bib{OzSy:limf}{article}{
   author={Ozawa, Ryunosuke},
   author={Shioya, Takashi},
   title={Limit formulas for metric measure invariants and phase transition
   property},
   journal={Math. Z.},
   volume={280},
   date={2015},
   number={3-4},
   pages={759--782},
   issn={0025-5874},
%   review={\MR{3369350}},
   doi={10.1007/s00209-015-1447-2},
}

\bib{OzYk:stab}{article}{
   author={Ozawa, Ryunosuke},
   author={Yokota, Takumi},
   title={Stability of RCD condition under concentration topology},
   journal={Calc. Var. Partial Differential Equations},
   volume={58},
   date={2019},
   number={4},
   pages={Art. 151, 30},
   issn={0944-2669},
%   review={\MR{3989955}},
   doi={10.1007/s00526-019-1586-0},
}

\bib{P:surgery}{article}{
    title={Ricci flow with surgery on three-manifolds},
    author={Perelman, Grisha},
    year={2003},
    eprint={arXiv:math/0303109 [math.DG]},
}

\bib{P:inf-gp}{book}{
   author={Pestov, Vladimir},
   title={Dynamics of infinite-dimensional groups},
   series={University Lecture Series},
   volume={40},
   note={The Ramsey-Dvoretzky-Milman phenomenon;
   Revised edition of {\it Dynamics of infinite-dimensional groups and
   Ramsey-type phenomena} [Inst. Mat. Pura. Apl. (IMPA), Rio de Janeiro,
   2005; MR2164572]},
   publisher={American Mathematical Society, Providence, RI},
   date={2006},
   pages={viii+192},
   isbn={978-0-8218-4137-2},
   isbn={0-8218-4137-8},
%   review={\MR{2277969}},
   doi={10.1090/ulect/040},
}

\bib{St:geomI}{article}{
   author={Sturm, Karl-Theodor},
   title={On the geometry of metric measure spaces. I},
   journal={Acta Math.},
   volume={196},
   date={2006},
   number={1},
   pages={65--131},
   issn={0001-5962},
%   review={\MR{2237206}},
   doi={10.1007/s11511-006-0002-8},
}

\bib{St:geomII}{article}{
   author={Sturm, Karl-Theodor},
   title={On the geometry of metric measure spaces. II},
   journal={Acta Math.},
   volume={196},
   date={2006},
   number={1},
   pages={133--177},
   issn={0001-5962},
%   review={\MR{2237207}},
   doi={10.1007/s11511-006-0003-7},
}
		
\bib{Sy:mmg}{book}{
   author={Shioya, Takashi},
   title={Metric measure geometry},
   series={IRMA Lectures in Mathematics and Theoretical Physics},
   volume={25},
   note={Gromov's theory of convergence and concentration of metrics and
   measures},
   publisher={EMS Publishing House, Z\"urich},
   date={2016},
   pages={xi+182},
   isbn={978-3-03719-158-3},
%   review={\MR{3445278}},
   doi={10.4171/158},
}

\bib{Sy:mmlim}{article}{
   author={Shioya, Takashi},
   title={Metric measure limits of spheres and complex projective spaces},
   conference={
      title={Measure theory in non-smooth spaces},
   },
   book={
      series={Partial Differ. Equ. Meas. Theory},
      publisher={De Gruyter Open, Warsaw},
   },
   date={2017},
   pages={261--287},
%   review={\MR{3701742}},
}

\bib{SyTk:stiefel}{article}{
author={Shioya, Takashi},
author={Takatsu, Asuka},
title={High-dimensional metric-measure limit of Stiefel and flag manifolds},
   journal={Math. Z.},
   volume={288},
   date={2018},
   pages={1--35},
   issn={0025-5874},
   doi={10.1007/s00209-018-2044-y},
}

\bib{SyYm:volume}{article}{
   author={Shioya, Takashi},
   author={Yamaguchi, Takao},
   title={Volume collapsed three-manifolds with a lower curvature bound},
   journal={Math. Ann.},
   volume={333},
   date={2005},
   number={1},
   pages={131--155},
   issn={0025-5831},
%   review={\MR{2169831}},
   doi={10.1007/s00208-005-0667-x},
}

  \end{biblist}
\end{bibdiv}

\end{document}